\documentclass[5pt]{article}
\usepackage{amsmath}
\usepackage[latin1]{inputenc}
\usepackage{amsmath,amsthm,amssymb}
\usepackage{graphicx}
\usepackage[all,poly,knot]{xy}

\usepackage[all]{xy}

\usepackage{listings} 
\usepackage{tikz-cd} 
\usepackage{amssymb} 
\usepackage{booktabs}
\usepackage{mathtools}
\usepackage{color}

\renewcommand{\theequation}{\thesection.\arabic{equation}}

\newtheorem{lem}{Lemma}[section]
\newtheorem{thm}{Theorem} [section]
\newtheorem{prop}{Proposition} [section]
\newtheorem{exmp}{Example} [section]
\newtheorem{coro}{Corollary}[section]
\newtheorem{rem}{Remark}[section]

\newtheorem{prob}{Problem}[section]

\title{Discrete Fourier Transform Approach to Cyclically Covering Subspaces of $\mathbb{F}^n_q$}

\author{Yangcheng Li$^{1,}$\footnote{E-mail\,$:$ liyc@m.scnu.edu.cn.} \,\, Pingzhi Yuan$^{1,}$\footnote{Corresponding author. E-mail\,$:$ yuanpz@scnu.edu.cn. Supported By NSF of China No.12571003, 12501006; Basic and Applied Basic Research Foundation of Guangdong Province No. 2024A1515010589.}  \\
	{\small\it  $^{1}$School of Mathematical Sciences, South China Normal University,}\\
	{\small\it Guangzhou 510631, Guangdong, P. R. China} \\
}

\date{}
\begin{document}
\baselineskip15pt \maketitle
\renewcommand{\theequation}{\arabic{section}.\arabic{equation}}
\catcode`@=11 \@addtoreset{equation}{section} \catcode`@=12

\begin{abstract}
Let $q$ be a prime power and $n$ a positive integer. A subspace \( U \subseteq \mathbb{F}_q^n \) is called cyclically covering if the union of all its cyclic shifts covers the whole space \( \mathbb{F}_q^n \). Let \( h_q(n) \) denote the maximum possible codimension of such a subspace. When \(\gcd(q,n)=1\), we derive necessary and sufficient conditions for \(h_q(n)=0\) via Discrete Fourier Transforms, and prove this equality is equivalent to the existence of full-weight codewords in cyclic codes of \(\mathbb{F}_q^n\). We also characterize codimension-$k$ cyclically covering subspaces. {Under suitable coprimality conditions on \(m,n\) and on the multiplicative orders of \(q\), we prove that the vanishing of \(h_q(m)\) and \(h_q(n)\) is preserved under their product.}

Based on these results, we give a unified characterization of \(h_q(n)\) in the case where $q$ and $n$ are primes with \(n>q\) and $q$ being a primitive root modulo $n$. Specifically, \(h_2(n) \geq 2\) and \(h_q(n) = 0\) for \(q \neq 2\). We prove that \(h_3(n) \ge 1\) for every prime \(n > 3\) with odd \(\operatorname{ord}_n(3)\). Moreover, for any prime \(q > 3\), the Generalized Riemann Hypothesis implies the existence of infinitely many primes \(n > q\) such that $q$ is not a primitive root modulo $n$ and \(h_q(n) = 0\). We provide algebraic interpretations for the inequalities \(h_q(mn)\ge\max\{h_q(m),h_q(n)\}\) and \(h_q(mn)\ge h_q(m)+h_q(n)\). Using Galois descent, we prove \(h_{q^m}(n)\le h_q(n)\). Furthermore, we generalize a class of constructions that achieve the upper bound \(\lfloor\log_q(n)\rfloor\). Finally, under the Generalized Riemann Hypothesis, we obtain average lower bounds of \(h_q(n)\) for $q=2,3$.
\end{abstract}

{\bf Keywords:} Finite fields, Cyclically covering subspaces, Cyclic shift, Codimension.

\section{Introduction}
A finite field, denoted as $\mathbb{F}_{q}$, where $q$ is a prime power, is a field that contains a finite number of elements. For $n\in\mathbb{N}$, let $\{\boldsymbol{e}_0, \boldsymbol{e}_1, \dots, \boldsymbol{e}_{n-1}\}$ be the standard basis for $\mathbb{F}^n_q$, the indices of vectors in $\mathbb{F}^n_q$ will be taken modulo $n$ (in particular, we set $\boldsymbol{e}_n = \boldsymbol{e}_0$). Define the cyclic shift operator $\tau : \mathbb{F}^n_q\to \mathbb{F}^n_q$ by
\[\tau: \sum_{i=0}^{n-1}a_i\boldsymbol{e}_i \mapsto \sum_{i=0}^{n-1}a_i\boldsymbol{e}_{i+1}.\]
We say that a subspace $U \subset \mathbb{F}^n_q$ is cyclically covering if $\bigcup_{i=0}^{n-1}\tau^i(U) =\mathbb{F}^n_q$. For any $n\in\mathbb{N}$, let $h_q(n)$ denote the largest possible codimension of a cyclically covering subspace of $\mathbb{F}^n_q$.

For general \(n\), determining the value of \(h_q(n)\) is quite difficult. Relatively few values of \(h_q(n)\) have been determined. Motivated by Isbell's conjecture \cite{Isbell56, Isbell60}, Cameron, Ellis, and Raynaud \cite{Cameron-Ellis-Raynaud} in 2019 studied several properties of \(h_q(n)\), determined some of its special values, and established upper and lower bounds for it. Their main results are summarized as follows.
\begin{lem}\cite{Cameron-Ellis-Raynaud} \label{lem1.1} Let \(q\) be a power of prime \(p\), and \(n,m,d,k \in \mathbb{N}\), then the following hold.

$(\mathrm{i})$ $h_2(n) \ge 2$ for all odd integers $n > 3$.

$(\mathrm{ii})$ $h_q(nm) \geq \max\{h_q(n), h_q(m)\}.$

$(\mathrm{iii})$ \(h_q(n) \leq \lfloor\log_q(n)\rfloor\).

$(\mathrm{iv})$ $h_q(q^d - 1) = d - 1 = \lfloor\log_q(q^d - 1)\rfloor.$

$(\mathrm{v})$ \(h_q(M/c) \geq kd + k - c\frac{q^k - 1}{q - 1}\), where \(M = (q - 1)\left(\sum_{r=0}^d q^{kr}\right)\), and \(M\) has a divisor \(c \in \mathbb{N}\) such that \(c < (q - 1)\frac{q^{kd} - q^{-kd}}{q^k - 1}\).

$(\mathrm{vi})$ \(h_q\left(\sum_{r=0}^d q^{kr}\right) = kd\), if \(\gcd(d + 1, q^k - 1) = 1\).

$(\mathrm{vii})$ $h_q(kp^d) = 0$ if \(k \mid q - 1\).

$(\mathrm{viii})$ \(h_2(n) = 0\) if and only if \(n = 2^d\) for some \(d \in \mathbb{N} \cup \{0\}\), and \(h_2(n) = 1\) if and only if \(n = 3\).
\end{lem}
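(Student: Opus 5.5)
Lemma~\ref{lem1.1} collects eight results of Cameron, Ellis and Raynaud, so the plan is to prove each item by the shortest route. Two tools will be used throughout. The first is the dual reformulation: a subspace $U$ of codimension $k$ is the common kernel of linear functionals $f_1,\dots,f_k$. Hence $U$ is cyclically covering if and only if, for every $x\in\mathbb{F}_q^n$, some shift $\tau^i x$ is killed by all the $f_j$. The second is the identification $\mathbb{F}_q^n\cong \mathbb{F}_q[x]/(x^n-1)$, under which $\tau$ is multiplication by $x$.

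For (ii), let $U\subseteq\mathbb{F}_q^n$ be cyclically covering. Identify $\mathbb{F}_q^{nm}$ with $m$ interleaved copies of length-$n$ blocks, taking coordinates $i\equiv r \pmod m$. Form $U'$ by requiring each residue class to lie in $U$. Each block then sits in some $\tau^{j}(U)$, but the shifts differ from block to block, so this naive choice fails. Instead, I would use the folding map $\phi:\mathbb{F}_q^{nm}\to\mathbb{F}_q^n$, $\phi(a)_i=\sum_{t} a_{i+tn}$. This map is linear, surjective and commutes with the shifts. Take $U'=\phi^{-1}(U)$; its codimension equals that of $U$. For any $a$, some $\tau^i\phi(a)=\phi(\tau^i a)$ lies in $U$, so $\tau^i a\in U'$. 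Interchanging the roles of $n$ and $m$ gives the maximum.

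For (iii), count: $|\bigcup_i\tau^i U|\le n q^{n-k}$, so covering forces $q^k\le n$.

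For (iv), the upper bound is (iii). For the lower bound, with $n=q^d-1$, identify coordinates with $\mathbb{F}_{q^d}^*=\langle\alpha\rangle$, sending $x$ to $\sum a_i\alpha^i$ (a Singer cycle). Take $U$ to be the preimage of a hyperplane through the origin of $\mathbb{F}_{q^d}$ viewed over $\mathbb{F}_q$, pulled back via this linear surjection $\mathbb{F}_q^n\to\mathbb{F}_{q^d}$. A shift multiplies the image by $\alpha$, and every nonzero $\beta\in\mathbb{F}_{q^d}$ can be moved into a fixed $\mathbb{F}_q$-line by some power of $\alpha$. So I would let $U$ be the preimage of that line, which has codimension $d-1$. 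The same Singer construction over $\mathbb{F}_{q^k}$, combined with (ii)-type folding, handles (v) and (vi). In the count, $c$ measures how many shift orbits fail to reach the line, and one adjusts by $c\frac{q^k-1}{q-1}$ extra codimension; the gcd condition in (vi) ensures the relevant map is onto the orbit representatives. This is the step I expect to be the main obstacle, since the exact bookkeeping of orbits of $\langle\alpha\rangle$ on projective points must match the stated formula.

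For (vii) and (viii), show that $h=0$ amounts to the statement that every nonzero linear functional $f$ misses some shift, equivalently that some $\tau^i x\notin\ker f$ for every $x\ne 0$. The contrapositive is the existence of $x\ne 0$ with all $f(\tau^ix)=0$, i.e. a nonzero cyclic subcode orthogonal to $f$. For $n=kp^d$ with $k\mid q-1$, the polynomial $x^n-1=\prod_{\zeta^k=1}(x-\zeta)^{p^d}$ splits. Every nonzero ideal then contains the ideal generated by some $(x^n-1)/(x-\zeta)$, which has a full-weight-type explicit generator. From this one shows that the sum over the ideal cannot lie in a hyperplane, giving $h=0$. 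For (viii), the ``if'' direction is (vii) with $k=1$, together with $h_2(3)=1$ by direct check. For ``only if'', any $n$ that is not a power of $2$ has an odd divisor $m>1$. By (ii) and (i), $h_2(n)\ge h_2(m)\ge 1$, with equality only when $m=3$ is the sole possibility; combining with (ii) again, $h_2(n)=1$ forces $n=3$. Item (i) itself would be proved by exhibiting an explicit codimension-2 subspace for odd $n>3$, as the preimage of a suitable $2$-codimensional condition on $(\sum a_i,\ \sum_{i \text{ even}} a_i)$-type functionals, checked by parity considerations on the shifts.
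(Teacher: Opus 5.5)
The paper only cites this lemma. It re-derives (ii) in Theorem \ref{thm4-1}, and (iv) together with the case $k=1$ of (vi) in Theorem \ref{Thm7-1}. Your arguments for (ii), (iii) and (iv) are correct. For (ii) and (iv) they are the same as the paper's: the folding map, and the pull-back of a line under $v\mapsto\sum_i v_i\beta^i$.

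The remaining items have genuine gaps.

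\emph{Item (i).} The functional $\sum_i a_i$ is shift-invariant. If $U$ lies in its kernel, so does every $\tau^j(U)$, and no odd-weight vector is ever covered. More generally, every functional vanishing on a covering $U\subseteq\mathbb{F}_2^n$ must have even weight (test $x=\mathbf{1}$). So the construction you describe cannot work, and you offer no alternative. One that does work for odd $n\ge5$ is $U=\{x: x_0=x_1,\ x_{n-1}=x_2\}$. Put $a_i=x_i+x_{i+1}$ and $b_i=x_{i-1}+x_{i+2}$; then $\sum_i a_i=\sum_i b_i=\sum_i a_ib_i=0$ for every $x$. The parity count from Step 2 of the proof of Theorem \ref{Thm3.1} then produces an $i$ with $a_i=b_i=0$, i.e.\ $x\in\tau^i(U)$.

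\emph{Item (viii).} Items (i) and (ii) only show that $h_2(n)=1$ forces $n=3\cdot2^a$. Applying (ii) ``again'' gives $h_2(3\cdot2^a)\ge h_2(3)=1$, which does not exclude $a\ge1$. You need the separate fact $h_2(6)\ge2$; after that, (ii) handles every $a\ge1$. For instance, $\{x\in\mathbb{F}_2^6: x_0+x_1+x_3+x_4=0,\ x_3+x_5=0\}$ is cyclically covering. One checks this by splitting on whether $(x_k+x_{k+3})_{k\in\mathbb{Z}_3}$ is constant.

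\emph{Item (vii).} The quantifiers are reversed. A hyperplane $\ker f$ fails to be covering iff there is \emph{one} $x$ with $f(\tau^i x)\neq0$ for \emph{all} $i$. Your condition is that for every $x\neq0$ some $\tau^i x\notin\ker f$, i.e.\ that $\ker f$ contains no nonzero cyclic code. That is a different statement, and it is false whenever $f(\mathbf{1})=0$. The correct reduction is that the words $(f(\tau^i x))_{i\in[n]}$, $x\in\mathbb{F}_q^n$, form a nonzero cyclic code. Hence $h_q(n)=0$ as soon as every nonzero cyclic code contains a full-weight word. This is the analogue of Lemma \ref{lem:vanishing-full-weight}, and here it needs no coprimality. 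With that reduction, your observation finishes (vii) at once: every nonzero ideal of $\mathbb{F}_q[x]/(x^n-1)$ contains some $(x^n-1)/(x-\zeta)=\sum_{j=0}^{n-1}\zeta^{n-1-j}x^j$, which has full weight. No ``sum over the ideal'' step is needed.

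\emph{Items (v) and (vi).} These are only gestured at, and the folding step you mention plays no role.

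For (vi), take $\beta$ of order $N=\sum_{r=0}^d q^{kr}$ in $\mathbb{F}_{q^{k(d+1)}}$. Since $N>q^{kd}$, $\beta$ lies in no proper subfield, so $v\mapsto\sum_i v_i\beta^i$ is onto. Since $N\equiv d+1\pmod{q^k-1}$, the gcd hypothesis gives $\langle\beta\rangle\,\mathbb{F}_{q^k}^*=\mathbb{F}_{q^{k(d+1)}}^*$. Therefore the pull-back of $\mathbb{F}_{q^k}$ is covering of codimension $kd$; Theorem \ref{Thm7-1} is the case $k=1$.

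For (v) you give no argument. The quantity $c\frac{q^k-1}{q-1}$ is the number of cosets of a subgroup of order $M/c$ in $\mathbb{F}_{q^{k(d+1)}}^*$. This suggests pulling back the span of one representative per coset, but that argument has to be carried out.
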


In 1991, Cameron (see \cite{Cameron} Problem 190) posed the following problem in an equivalent form:
\begin{prob}\cite{Cameron}\label{prob1.1}
Does $h_2(n)\to\infty$ as $n\to\infty$ over the odd integers or is $h_2(n) =2$ for infinitely many odd $n$?
\end{prob}
Motivated by Problem \ref{prob1.1}, in 2019, Aaronson, Groenland and Johnston \cite{Aaronson-Groenland-Johnston} investigated the cyclically covering subspaces of \(\mathbb{F}_2^n\). Their main conclusions are summarized as follows.
\begin{lem}\cite{Aaronson-Groenland-Johnston} \label{lem1.2} Let \(q\) be a power of prime \(p\), and \(n,m,\ell \in \mathbb{N}\), then the following hold.

$(\mathrm{i})$ \(h_q(mn) \geq h_q(m) + h_q(n)\).

$(\mathrm{ii})$ \(h_q(pn) \leq ph_q(n)\).

$(\mathrm{iii})$ \(h_q(\ell p^d) = 0\) for any \(\ell < q\).

$(\mathrm{iv})$ \(h_2(t) = 2\) if \(t > 3\) is a prime for which \(2\) is a primitive root.

$(\mathrm{v})$ \(h_q(t) = 0\), where \(q\) is an odd prime, and \(t > q\) is a prime with \(q\) as a primitive root.
\end{lem}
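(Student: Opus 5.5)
For part (i) I would build the covering subspace by hand. Let $U\subseteq\mathbb{F}_q^m$ and $V\subseteq\mathbb{F}_q^n$ be cyclically covering of codimensions $a=h_q(m)$ and $b=h_q(n)$. For $x\in\mathbb{F}_q^{mn}$ define the folding map $\varphi(x)=\bigl(\sum_k x_{j+mk}\bigr)_{j<m}\in\mathbb{F}_q^m$ and the decimation $x^{(0)}=(x_{mk})_{k<n}\in\mathbb{F}_q^n$. Put
\[W=\{x:\varphi(x)\in U,\ x^{(0)}\in V\},\]
which has codimension at most $a+b$. The map $\varphi$ intertwines the shifts, and $\tau^m$ fixes $\varphi$ while acting on $x^{(0)}$ as the cyclic shift of $\mathbb{F}_q^n$. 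So, given $x$, I first choose $s$ with $\varphi(\tau^s x)\in U$. Then I choose $t$ with $(\tau^{mt}\tau^s x)^{(0)}\in V$, and this shift does not disturb the first condition. Hence $W$ is covering. (If the codimension drops below $a+b$, I replace $U$ and $V$ by generic sub-hyperplanes so that the constraints are independent; alternatively, the inequality $\operatorname{codim}W\le a+b$ together with monotonicity of covering under enlargement settles it.)

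For part (ii) I would identify $\mathbb{F}_q^{pn}$ with $R=\mathbb{F}_q[x]/(x^{pn}-1)=\mathbb{F}_q[x]/((x^n-1)^p)$, with $\tau$ acting as multiplication by $x$. Consider the shift-invariant filtration $M_k=(x^n-1)^kR$ for $0\le k\le p$. Each quotient $M_k/M_{k+1}$ is isomorphic, as a shift module, to $\mathbb{F}_q[x]/(x^n-1)=\mathbb{F}_q^n$. Given a covering $W$, the image of $W\cap M_k$ in $M_k/M_{k+1}$ is again covering, because $M_k$ is $\tau$-stable. Its codimension is therefore at most $h_q(n)$. The identity
\[\dim W=\sum_k\dim\bigl((W\cap M_k)/(W\cap M_{k+1})\bigr)\]
then gives $\operatorname{codim}W\le p\,h_q(n)$.

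Part (iii) follows from (ii) by iteration: $h_q(\ell p^d)\le p^d h_q(\ell)$. Moreover $h_q(\ell)=0$ because a proper subspace $U$ has only $\ell\le q$ shifts, and $q$ or fewer proper subspaces never cover $\mathbb{F}_q^\ell$.

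For (iv) and (v), with $t$ prime and $q$ a primitive root mod $t$, I would decompose $\mathbb{F}_q^t=\mathbb{F}_q\mathbf{1}\oplus V_1$ with $V_1\cong\mathbb{F}_{q^{t-1}}$, on which $\tau$ acts as multiplication by a primitive $t$-th root of unity $\theta$. For a functional $f=\lambda c+\operatorname{Tr}(a\gamma)$ with $a\neq0$, the linear map $\gamma\mapsto(\operatorname{Tr}(a\theta^j\gamma))_{j<t}$ is injective with image exactly the sum-zero subspace of $\mathbb{F}_q^t$. This follows from a dimension count and $\sum_j\theta^j=0$. For (v), take $c=0$ and any sum-zero vector with all entries nonzero, which exists for odd $q$. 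This gives an $x$ none of whose shifts lies in $\ker f$, so no hyperplane is covering. If instead $a=0$, any $c\ne0$ works. Hence $h_q(t)=0$.

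For (iv), the lower bound $h_2(t)\ge2$ is Lemma~\ref{lem1.1}(i). The upper bound $h_2(t)\le2$ is the main obstacle. For a codimension-$3$ subspace $\ker(f_1,f_2,f_3)$, I must exhibit $(c,\gamma)$ such that, for every $j$, the triple $(f_i(\tau^jx))_i$ is nonzero. By the same trace description, this reduces to finding suitable codewords in a cyclic code over $\mathbb{F}_2$ generated by the minimal polynomial of $\theta$ (the even-weight code). Naive counting fails here, so I would instead use the irreducibility of $V_1$ to choose $\gamma$ explicitly in terms of the $a_i$, with separate cases according to the $\mathbb{F}_2$-span of $a_1,a_2,a_3$.
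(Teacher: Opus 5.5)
\textbf{The main gap: the upper bound in (iv).} The bound $h_2(t)\le 2$ in (iv) is the main theorem of Aaronson--Groenland--Johnston, and your last paragraph gives a plan rather than an argument. The paper does not reprove this bound either: it only cites the lemma, and reproves $h_2(t)\ge 2$ and (v) in Theorem~\ref{Thm3.1}.

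Your reduction is fine. If the parity functional lies in $U^\perp$, an odd-weight $x$ defeats $U$. So $a_1,a_2,a_3$ are $\mathbb{F}_2$-independent, and you need $(c,\gamma)$ whose orbit $\{\theta^j\gamma\}_{j<t}$ avoids the affine codimension-$3$ set $\{z:\operatorname{Tr}(a_iz)=\lambda_ic,\ i=1,2,3\}$.

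From there nothing is shown:
\begin{itemize}
\item Counting cannot produce such $(c,\gamma)$ once $t\ge 11$, since $t\cdot 2^{t-4}\ge 2^{t-1}$.
\item The natural explicit choice is $c=0$, $\gamma=b^{-1}$ with $0\ne b\in\operatorname{span}(a_1,a_2,a_3)$. This turns the word $(\operatorname{Tr}(b\theta^j\gamma))_j$ into $(0,1,\dots,1)$, but coordinate $0$ stays uncovered unless $\operatorname{Tr}(a/b)=1$ for some $a$ in the span.
\item Nothing in your setup guarantees that condition, and you give no way to handle the configurations where it fails.
\end{itemize}
So (iv) is established only in the direction $h_2(t)\ge 2$. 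A minor slip: the even-weight code is generated by $x-1$; the minimal polynomial of $\theta$ is its check polynomial.

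\textbf{A fixable flaw in (i).} Your construction is right, but both remedies for a possible drop in codimension are invalid. Replacing $U,V$ by sub-hyperplanes can destroy the covering property. Monotonicity under enlargement only lowers codimension.

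What you need, and what holds, is $\operatorname{codim}W=a+b$ exactly. The only linear relation between $\varphi(x)$ and $x^{(0)}$ is $\varphi(x)_0=\sum_k x_{mk}$. So a nontrivial dependency $f\circ\varphi=g\circ(\cdot)^{(0)}$ with $f\in U^\perp$, $g\in V^\perp$ forces $f=f_0\boldsymbol{e}_0$ and $g=f_0\boldsymbol{1}_n$. But $\boldsymbol{e}_0\in U^\perp$ is impossible: then $U\subseteq\{u_0=0\}$, and the all-ones vector lies in no shift of $U$.

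With this observation your folding/decimation argument proves (i) for all $m,n$. That is more than the paper's Theorem~\ref{Thm5.1}, which uses tensor products and the DFT and needs $\gcd(m,n)=\gcd(mn,q)=1$.

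\textbf{The remaining parts are correct.} Your filtration proof of (ii), the deduction of (iii), and the proof of (v) all work. In (v), recognizing the image of $\gamma\mapsto(\operatorname{Tr}(a\theta^j\gamma))_j$ as the whole sum-zero code is a cleaner version of the paper's explicit trace computation.
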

Based on conclusions $(\mathrm{i})$ and $(\mathrm{iv})$ of Lemma \ref{lem1.2}, a positive answer to Problem \ref{prob1.1} can be given provided that Artin's conjecture holds true. Artin conjectured that $2$ is a primitive root modulo infinitely many primes. More generally, for any non-square positive integer \(n\), there are infinitely many primes \(p\) for which \(n\) is a primitive root modulo \(p\). Artin's conjecture follows from the Generalized Riemann Hypothesis (Hooley \cite{Hooley}). Though no \(n\) is known to satisfy it, Heath-Brown \cite{Heath-Brown} proved it holds for at least one of \(\{2, 3, 5\}\).

Furthermore, Aaronson, Groenland and Johnston \cite{Aaronson-Groenland-Johnston} posed several interesting and challenging problems, such as:
\begin{prob}\cite{Aaronson-Groenland-Johnston}\label{prob1.2}
For which \( n \in \mathbb{N} \) is \( h_q(n) = 0 \)?
\end{prob}

In 2024, Huang \cite{Huang} obtained a necessary and sufficient condition for \(h_q(n) = 0\) when \(\gcd(q, n) = 1\).
His main result shows that this problem can be reduced to computing the values of the trace function over finite fields.
He also derived the following conclusions.
\begin{lem}\cite{Huang}\label{lem1.3} Let \( p \) be an odd prime, and let \( q \) be a power of \( p \). For any non-negative integer \( d \), the following statements hold:

$(\mathrm{i})$ \( h_q\left(p^d(q + 1)\right) = 0 \).

$(\mathrm{ii})$ \( h_q\left(2p^d(q - 1)\right) = 0 \) if \(4 \mid q + 1 \).

$(\mathrm{iii})$ Let \( \ell \) be an odd prime number such that \( q \) is a primitive root modulo \( 2\ell \). If \( q \) is relatively prime to \( \ell - 1 \), then \( h_q\left(2p^d\ell\right) = 0 \).
\end{lem}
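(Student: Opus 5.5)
The plan is to turn ``$h_q(n)=0$'' into a statement about full-weight codewords in cyclic codes, reduce away the factor $p^d$, and then handle each of the three lengths $q+1$, $2(q-1)$, $2\ell$ by an explicit trace computation.

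\textbf{Reformulation.} First I would translate $h_q(n)=0$ into coding language.
\begin{itemize}
\item A codimension-one subspace has the form $U_a=\{x:\langle a,x\rangle=0\}$ with $a\neq 0$.
\item $U_a$ is cyclically covering iff every $x$ has some $i$ with $\langle a,\tau^i x\rangle=0$.
\item The vector $(\langle a,\tau^i x\rangle)_i$ is, in $R_n=\mathbb{F}_q[X]/(X^n-1)$, the product $\bar a(X)x(X)$, where $\bar a$ is the reversed polynomial.
\item Hence $h_q(n)=0$ iff every nonzero principal ideal of $R_n$, i.e.\ every nonzero cyclic code of length $n$, contains a full-weight codeword.
\end{itemize}
Every nonzero ideal contains a minimal ideal, so it suffices to check minimal cyclic codes.

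\textbf{Removing the factor $p^d$.} Write $n=p^dm$ with $p\nmid m$. In characteristic $p$ we have $X^n-1=(X^m-1)^{p^d}$ and $(X^m-1)^{p^d-1}=\sum_{k=0}^{p^d-1}X^{km}$. A minimal ideal of $R_n$ consists of elements $(X^m-1)^{p^d-1}b(X)$ with $b$ ranging over a minimal ideal of $R_m$. Such an element is the $p^d$-fold repetition of $b \bmod (X^m-1)$. So it has full weight iff $b$ does, and all three claims reduce to $d=0$ with $\gcd(m,q)=1$.

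\textbf{Trace description.} For $\gcd(m,q)=1$, the minimal code attached to an irreducible factor with root $\alpha$ of degree $k$ is $\{(\mathrm{Tr}_{\mathbb{F}_{q^k}/\mathbb{F}_q}(\theta\alpha^{i}))_{i}:\theta\in\mathbb{F}_{q^k}\}$. So I need, for each $\alpha$ with $\alpha^m=1$, some $\theta$ with $\mathrm{Tr}(\theta\alpha^i)\neq0$ for all $i$. Factors of degree one, i.e.\ $\alpha\in\mathbb{F}_q$, are trivial: take $\theta=1$.

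\begin{itemize}
\item \emph{Case (i), $m=q+1$, $\alpha\notin\mathbb{F}_q$.} Here $\alpha\in\mathbb{F}_{q^2}$, $k=2$, and $\mathrm{Tr}(\theta\alpha^i)=\theta\alpha^i+\theta^q\alpha^{-i}$. This vanishes iff $\theta^{q-1}=-\alpha^{-2i}$. The values $\theta^{q-1}$ run over the whole norm-one group of order $q+1$. Since $q$ is odd, $-\langle\alpha^2\rangle$ is a coset of a subgroup of order at most $(q+1)/2$, so some $\theta$ avoids it.
\item \emph{Case (ii), $m=2(q-1)$, $\beta\notin\mathbb{F}_q$.} Then $\beta^{q-1}=-1$, so $\beta^q=-\beta$ and $\mathrm{Tr}(\theta\beta^i)=\beta^i\bigl(\theta+(-1)^i\theta^q\bigr)$. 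It suffices to take $\theta$ with $\theta^{q-1}\neq\pm1$. This is possible since $q^2-1>2(q-1)$. I would then check where $4\mid q+1$ is genuinely needed; presumably it rules out a degenerate factor, such as $X^2+1$ splitting oddly, and I expect it to be harmless.
\item \emph{Case (iii), $m=2\ell$.} Since $q$ is a primitive root mod $2\ell$, $X^{2\ell}-1=(X-1)(X+1)\Phi_\ell\Phi_{2\ell}$ with $\Phi_\ell,\Phi_{2\ell}$ irreducible of degree $\ell-1$. Take $\theta=1$. For $\zeta$ a primitive $\ell$-th root, the conjugates of $\zeta^j$ ($\ell\nmid j$) are all primitive $\ell$-th roots, so $\mathrm{Tr}(\zeta^j)=-1$ and $\mathrm{Tr}(-\zeta^j)=1$. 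The remaining powers $\pm1$ have traces $\pm(\ell-1)$. These are nonzero exactly because $\gcd(q,\ell-1)=1$, i.e.\ $p\nmid\ell-1$. This is the only place that hypothesis enters.
\end{itemize}

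The main obstacle I anticipate is the reduction step: justifying cleanly that the minimal ideals of $R_{p^dm}$ are exactly the repetition codes above, which needs the chain-ring structure of $\mathbb{F}_q[X]/(f^{p^d})$. I would handle it by noting that any nonzero ideal, multiplied by a suitable power of $X^m-1$, lands in the socle $(X^m-1)^{p^d-1}R_n\cong R_m$. Pinning down the exact role of $4\mid q+1$ in (ii) is a secondary point to check.
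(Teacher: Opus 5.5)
The paper gives no proof of this lemma; it is quoted from Huang. Your argument is correct, and it is essentially the route the paper's own machinery provides:
\begin{itemize}
\item your ideal reformulation is Lemma~\ref{lem:vanishing-full-weight}, here valid without assuming $\gcd(n,q)=1$;
\item your socle/repetition argument is a self-contained proof of Lemma~\ref{lem1.4};
\item your trace evaluations are the $P_{\widehat{\boldsymbol c}}(\omega^i)$ computations of Lemma~\ref{Lem6}, and case (iii) is the same computation as Case~1 of Theorem~\ref{Thm3.1}.
\end{itemize}

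On the point you left open in (ii): the hypothesis $4\mid q+1$ is never used, and it is genuinely superfluous. For odd $q$, every root $\beta$ of $X^{q-1}+1$ satisfies $\beta^q=-\beta$. When $q\equiv 3\pmod 4$ this simply makes $X^2+1$ one of the quadratic factors; nothing degenerates. So your computation is uniform in $q$ and proves $h_q(2p^d(q-1))=0$ for every odd $q$. This is consistent with Table~\ref{tab:h5-values}, which lists $h_5(8)=0$ although $4\nmid 6$.

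In (i), the vanishing condition should read $\theta^{q-1}=-\alpha^{2i}$ rather than $-\alpha^{-2i}$. This is harmless, because $i$ runs over all residues.
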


In 2025, Sun, Ma and Zeng \cite{Sun-Ma-Zeng} investigated the cyclically covering subspaces of the finite field \(\mathbb{F}_{q^n}\) and determined the value of \(h_2(n)\) for certain special values of \(n\). In particular, they showed that \(h_2(21) = 4\). Finally, they established several lower bounds for \(h_q(n)\) in the case where \(\gcd(q, n) = 1\).

In 2025, Li and Yuan \cite{Li-Yuan} proved that the problem of determining \(h_q(n) = 0\) can be reduced to the case where \(\gcd(q, n) = 1\). Specifically, they established the following result.
\begin{lem}\cite{Li-Yuan}\label{lem1.4} 
Let \( q \) be a power of a prime $p$, and let \( n \) be a positive integer satisfying \( \gcd(p, n) = 1 \). Then for any non-negative integer \( k \), we have \( h_q(np^k) = 0 \) if and only if \( h_q(n) = 0 \).
\end{lem}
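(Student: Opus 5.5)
The plan is to prove both directions of Lemma \ref{lem1.4} by passing between $\mathbb{F}_q^n$ and $\mathbb{F}_q^{np^k}$ through an explicit linear map that respects the cyclic shift. Write $N=np^k$ and identify $\mathbb{F}_q^N$ with $R_N=\mathbb{F}_q[x]/(x^N-1)$, so that $\tau$ becomes multiplication by $x$. The statement $h_q(N)=0$ says exactly that the whole space is the only cyclically covering subspace. Equivalently, every hyperplane $H=\ker\lambda$ misses some vector $v$ together with all of its shifts; that is, $\lambda(x^iv)\neq 0$ for every $i$.

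For the direction $h_q(N)=0\Rightarrow h_q(n)=0$, I would use the periodic-extension map
\[\iota:\mathbb{F}_q^n\to\mathbb{F}_q^N,\qquad v\mapsto (v,v,\dots,v).\]
It commutes with the shifts, in the sense that $\iota\tau_n=\tau_N\iota$. Let $H=\ker\lambda$ be a hyperplane of $\mathbb{F}_q^n$. Put $\Lambda=\lambda\circ\pi$, where $\pi$ is the folding map $\pi(w)=\sum_{j<p^k}(\text{$j$-th block of }w)$. Then $\pi\tau_N=\tau_n\pi$, and $\pi$ is surjective, so $\Lambda\ne 0$. By hypothesis there is $w\in\mathbb{F}_q^N$ with $\lambda(\tau_n^i\pi(w))=\Lambda(\tau_N^iw)\neq0$ for all $i$. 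Hence $v=\pi(w)$ avoids all shifts of $H$, so $H$ is not cyclically covering. This direction looks routine.

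For the converse, $h_q(n)=0\Rightarrow h_q(N)=0$, I would argue by induction on $k$, reducing to $N=pm$ with $h_q(m)=0$. Fix a hyperplane $\ker\Lambda\subset R_{pm}$ and write $\Lambda(f)=\mathrm{Tr}$-style as $\Lambda(f)=\langle c,f\rangle$ for some nonzero $c\in R_{pm}$. Since $x^{pm}-1=(x^m-1)^p$, I would use the filtration of $R_{pm}$ by the ideals $(x^m-1)^jR_{pm}$. For $g\in R_m$, consider vectors of the form $f=(x^m-1)^{j}\tilde g$, with $j$ chosen maximal so that $\Lambda$ does not vanish on $(x^m-1)^jR_{pm}$. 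Then $\Lambda$ vanishes on $(x^m-1)^{j+1}R_{pm}$, so it induces a nonzero functional $\bar\Lambda$ on
\[(x^m-1)^jR_{pm}/(x^m-1)^{j+1}R_{pm}\cong R_m.\]
This isomorphism is compatible with multiplication by $x$. Applying $h_q(m)=0$ to $\bar\Lambda$ gives $g\in R_m$ with $\bar\Lambda(x^ig)\ne0$ for all $i<m$. Taking $f=(x^m-1)^j\tilde g$ for any lift $\tilde g$, we get $\Lambda(x^if)=\bar\Lambda(x^i g)\ne 0$ for all $i$, because $x^m$ acts trivially on the quotient. So $\ker\Lambda$ is not cyclically covering.

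The main obstacle I expect is checking that the graded piece really is isomorphic to $R_m$ as an $\mathbb{F}_q[x]$-module. Concretely, multiplication by $(x^m-1)^j$ should induce an isomorphism $R_{pm}/(x^m-1)R_{pm}\to(x^m-1)^jR_{pm}/(x^m-1)^{j+1}R_{pm}$ for $j<p$. This follows from the fact that $R_{pm}$ is free over $\mathbb{F}_q[y]/(y^p)$ with $y=x^m-1$, on the basis $1,x,\dots,x^{m-1}$. I would also have to handle a possible index mismatch, since $i$ ranges up to $pm$ rather than $m$; this is harmless because the shifts $x^i$ and $x^{i+m}$ agree modulo $y$. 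Note that this converse argument never uses $\gcd(p,n)=1$, and neither does the first direction, so the proof goes through for any $n$.
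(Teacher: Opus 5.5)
Your argument is correct, and it proves slightly more than stated, since nothing in it uses $\gcd(p,n)=1$. The paper gives no proof of this lemma (it is quoted from \cite{Li-Yuan}), so the useful comparison is with the paper's own tools.

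\textbf{First direction.} This is exactly the folding-map argument of Theorem~\ref{thm4-1}, specialised to hyperplanes: the same $\pi$ and the same intertwining $\pi\circ\tau_N=\tau_n\circ\pi$. The map $\iota$ you introduce is never used and can be dropped.

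\textbf{Converse.} In the paper's framework the whole lemma is the sandwich $h_q(n)\le h_q(np^k)\le p^k h_q(n)$. This comes from Lemma~\ref{lem1.1}(ii) together with $k$ applications of Lemma~\ref{lem1.2}(ii), and neither needs coprimality. Your $(x^m-1)$-adic filtration is a self-contained proof of the vanishing case $h_q(m)=0\Rightarrow h_q(pm)=0$ of Lemma~\ref{lem1.2}(ii). It also shows where the characteristic enters: $(x^m-1)^p=x^{pm}-1$ makes $y$ nilpotent, and that nilpotency is what produces a maximal $j$.

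\textbf{The step you flag as the main obstacle.} You do not need the isomorphism. You only use that $g\mapsto y^j\tilde g$ is a well-defined, $x$-equivariant surjection $R_m\to y^jR_{pm}/y^{j+1}R_{pm}$. That alone makes the pulled-back functional on $R_m$ nonzero. Injectivity follows anyway by counting: the $p$ graded pieces have total dimension $pm$, and each has dimension at most $m$.

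\textbf{A stronger result from the same method.} Run your filtration on a cyclically covering subspace $U$ instead of on a functional. The images of $U\cap y^jR_{pm}$ in the $p$ graded pieces are each cyclically covering in a copy of $\mathbb{F}_q^m$, and their codimensions sum to $\operatorname{codim}U$. This gives the full inequality $h_q(pm)\le p\,h_q(m)$, so your method recovers Lemma~\ref{lem1.2}(ii) itself, not just its vanishing case.
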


In 2026, Li, Yuan, Li and Zeng \cite{Li-Yuan-Li-Zeng} obtained several sufficient conditions for \(h_q(n) = 0\), which generalize some existing results. Specifically, they showed that \( h_q(\ell^t) = 0 \) whenever \( q \) is a primitive root modulo \( \ell^t \). Moreover, they proved that if \( n \) is odd and \( h_q(n) = 0 \), then also \( h_q(2n) = 0 \). 

We also mention a problem related to covering vector spaces over \(\mathbb{F}_q\). Luh \cite{Luh} proved that any vector space \(V\) over a finite field \(\mathbb{F}_q\) can be expressed as the union of \(|\mathbb{F}_q| + 1\) proper subspaces, and such a collection of subspaces is unique up to automorphisms of \(V\).

In this paper, we derive necessary and sufficient conditions for \(h_q(n)=0\) with \(\gcd(q,n)=1\) via Discrete Fourier Transforms, and prove this equality is equivalent to the existence of full-weight codewords in cyclic codes of \(\mathbb{F}_q^n\). We also characterize codimension-$k$ cyclically covering subspaces. 
Based on these results, we first give a unified characterization of \(h_q(n)\) in the case where $q$ and $n$ are primes with \(n>q\) and $q$ being a primitive root modulo $n$. Specifically, \(h_2(n) \geq 2\) and \(h_q(n) = 0\) for \(q \neq 2\). 

In existing literature (see \cite{Aaronson-Groenland-Johnston,Cameron-Ellis-Raynaud}), the characterization of \(h_2(n)=0\) is well established. By contrast, only scattered results are available for \(h_q(n)=0\) in the case of odd characteristic.
We prove that \(h_3(n) \ge 1\) for every prime \(n > 3\) with odd \(\operatorname{ord}_n(3)\). Moreover, for any prime \(q > 3\), the Generalized Riemann Hypothesis implies the existence of infinitely many primes \(n > q\) such that $q$ is not a primitive root modulo $n$ and \(h_q(n) = 0\). And we give an explicit example \(h_5(11)=0\), where $5$ is not a primitive root modulo $11$. We provide algebraic interpretations for the inequalities \(h_q(mn)\ge\max\{h_q(m),h_q(n)\}\) and \(h_q(mn)\ge h_q(m)+h_q(n)\). {We further prove a coprime-product criterion: if \(m,n\ge 2\), the integers \(m,n\) are coprime to each other and to \(q\), the orders \(\operatorname{ord}_m(q)\) and \(\operatorname{ord}_n(q)\) are coprime, and \(h_q(m)=h_q(n)=0\), then \(h_q(mn)=0\). We also extend this criterion to pairwise coprime products of finitely many factors.} Using Galois descent, we prove \(h_{q^m}(n)\le h_q(n)\). Moreover, we generalize a class of constructions that achieve the upper bound \(\lfloor\log_q(n)\rfloor\). Finally, under the Generalized Riemann Hypothesis, we obtain average lower bounds of \(h_q(n)\) for $q=2,3$.

This paper is organized as follows. In Section 2, we first present the definition of the Discrete Fourier Transform along with several fundamental properties. Then we derive the necessary and sufficient conditions for \(h_q(n) = 0\). Section 3 is devoted to specific applications of the above results. {Section 4 gives an algebraic interpretation of \(h_q(mn)\ge\max\{h_q(m),h_q(n)\}\). Section 5 develops tensor-product methods both for the superadditivity inequality and for a coprime-product criterion preserving \(h_q(n)=0\).} In Section 6, we apply Galois descent to prove \(h_{q^m}(n)\leq h_q(n)\). In Section 7, we generalize a class of constructions attaining the upper bound \(\lfloor\log_q n\rfloor\). In Section 8, we establish average lower bounds of \(h_q(n)\) under the Generalized Riemann Hypothesis.

\section{Discrete Fourier Transform}
The Discrete Fourier Transform (DFT) is the main tool used in this paper, so we first present its definition and some basic properties. Throughout this paper, we assume 
$\gcd(n,q)=1$ unless otherwise stated. Denote $[n]$ as the set $\{0,1,\dots,n-1\}$. Let $m=\mathrm{ord}_n(q)$. Then there exists a primitive $n$-th root of unity $\omega$ in the extension field $\mathbb{F}_{q^m}$ over $\mathbb{F}_q$ such that $\omega^n=1$ and $\omega^i\neq1$ for $1\le i\le n-1$. For any vector
\[\boldsymbol{x} = (x_0, x_1, \dots, x_{n-1}) \in \mathbb{F}_q^n,\]
its Discrete Fourier Transform (DFT) is defined as
\[\widehat{\boldsymbol{x}}_k=\operatorname{DFT}(\boldsymbol{x})_k = \sum_{j=0}^{n-1} x_j \,\omega^{jk}, \qquad k = 0, 1, \dots, n-1.\]
In matrix form, this reads
\[\begin{pmatrix}
\widehat{x}_0 \\
\widehat{x}_1 \\
\vdots \\
\widehat{x}_{n-1}
\end{pmatrix}
=
\begin{pmatrix}
1 & 1 & 1 & \cdots & 1 \\
1 & \omega & \omega^2 & \cdots & \omega^{n-1} \\
1 & \omega^2 & \omega^4 & \cdots & \omega^{2(n-1)} \\
\vdots & \vdots & \vdots & \ddots & \vdots \\
1 & \omega^{n-1} & \omega^{2(n-1)} & \cdots & \omega^{(n-1)(n-1)}
\end{pmatrix}
\begin{pmatrix}
x_0 \\
x_1 \\
\vdots \\
x_{n-1}
\end{pmatrix}.\]
The matrix above is a Vandermonde matrix. Since \(\omega\) is a primitive \(n\)-th root of unity, its determinant is non-zero, so the DFT is injective. In particular, it defines an \(\mathbb{F}_q\)-linear injective map from \(\mathbb{F}_q^n\) to \(\mathbb{F}_{q^m}^n\). The Inverse Discrete Fourier Transform (IDFT) is given as follows:  
\[x_j = \operatorname{IDFT}(\widehat{\boldsymbol{x}})_j = \frac{1}{n} \sum_{k=0}^{n-1} \widehat{\boldsymbol{x}}_k \,\omega^{-jk}, \qquad j = 0, 1, \dots, n-1,\]
where \(\frac{1}{n}\) denotes the multiplicative inverse of \(n\) in the field \(\mathbb{F}_q\). We first characterize the image of $\mathbb{F}_q^n$ under the Discrete Fourier Transform.

\begin{lem}\label{Lem0}
The image of \(\mathbb{F}_q^n\) under the Discrete Fourier Transform is
$$\operatorname{DFT}(\mathbb{F}_q^n)= \big\{\,\boldsymbol{y} \in \mathbb{F}_{q^m}^n : y_k^q = y_{qk} \text{ for all } k\in [n]\,\big\}.$$
\end{lem}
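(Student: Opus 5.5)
We show the two inclusions, then compare dimensions over $\mathbb{F}_q$. Throughout, write $S=\{\boldsymbol{y}\in\mathbb{F}_{q^m}^n : y_k^q=y_{qk}\ \text{for all } k\in[n]\}$, with indices read modulo $n$.

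\emph{Inclusion $\operatorname{DFT}(\mathbb{F}_q^n)\subseteq S$.} This is a direct computation with the Frobenius. For $\boldsymbol{x}\in\mathbb{F}_q^n$, the map $z\mapsto z^q$ is additive on $\mathbb{F}_{q^m}$ and fixes every $x_j\in\mathbb{F}_q$. Hence
\[\widehat{x}_k^{\,q}=\Big(\sum_{j=0}^{n-1} x_j\omega^{jk}\Big)^q=\sum_{j=0}^{n-1}x_j\omega^{jqk}=\widehat{x}_{qk}.\]
Since $\omega^n=1$, the index $qk$ may indeed be taken modulo $n$.

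\emph{Reverse inclusion.} Here the route is to count dimensions rather than invert directly. $S$ is an $\mathbb{F}_q$-subspace of $\mathbb{F}_{q^m}^n$, because the defining conditions are $\mathbb{F}_q$-linear (Frobenius is $\mathbb{F}_q$-linear). The DFT is injective, as noted above, so $\dim_{\mathbb{F}_q}\operatorname{DFT}(\mathbb{F}_q^n)=n$. It therefore suffices to prove $|S|\le q^n$.

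To bound $|S|$, use that $\gcd(q,n)=1$, so multiplication by $q$ permutes $[n]$. Decompose $[n]$ into $q$-cyclotomic cosets $C_1,\dots,C_r$, and for each coset fix a representative $k_i$. Write $C_i=\{k_i,qk_i,\dots,q^{s_i-1}k_i\}$, where $s_i=|C_i|$ is the least positive integer with $q^{s_i}k_i\equiv k_i \pmod n$. A vector $\boldsymbol{y}\in S$ is determined on $C_i$ by $y_{k_i}$, via $y_{q^tk_i}=y_{k_i}^{q^t}$. Going once around the coset forces $y_{k_i}^{q^{s_i}}=y_{k_i}$, so $y_{k_i}\in\mathbb{F}_{q^{s_i}}$, which leaves at most $q^{s_i}$ choices. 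Multiplying over the cosets gives $|S|\le\prod_i q^{s_i}=q^{\sum_i s_i}=q^n$. Combined with the first inclusion, this yields equality.

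\emph{Alternative.} One can instead show directly that the IDFT of any $\boldsymbol{y}\in S$ lies in $\mathbb{F}_q^n$. Raise $x_j=\frac1n\sum_k y_k\omega^{-jk}$ to the $q$-th power and reindex by $k\mapsto qk$, which is a bijection of $[n]$; this shows $x_j^q=x_j$. Either route works.

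The only delicate points are that $\gcd(q,n)=1$ makes $k\mapsto qk$ a bijection, and keeping track of indices modulo $n$; neither is a serious obstacle.
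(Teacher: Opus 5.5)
Your proof is correct. Your ``Alternative'' is exactly the paper's proof. The paper takes $\boldsymbol{y}$ in the set and raises $x_j=\frac1n\sum_k y_k\omega^{-jk}$ to the $q$-th power. Using $(1/n)^q=1/n$ and the bijective reindexing $l=qk$, it gets $x_j^q=x_j$, so it exhibits the preimage $\operatorname{IDFT}(\boldsymbol{y})\in\mathbb{F}_q^n$ explicitly.

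Your main route replaces this inverse-transform computation with a counting argument. It relies on injectivity of the DFT (the Vandermonde remark) and on the partition of $[n]$ into $q$-cyclotomic cosets. The bound $|S|\le\prod_i q^{s_i}=q^n$ is sound: $\boldsymbol{y}\mapsto(y_{k_i})_i$ is injective on $S$, and $z^{q^{s_i}}=z$ has at most $q^{s_i}$ roots in $\mathbb{F}_{q^m}$.

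The direct route is shorter and constructive. Yours has one advantage: the equality $|S|=q^n$ forces that injective map to be a bijection onto $\prod_i\mathbb{F}_{q^{s_i}}$. So you get for free that $\operatorname{DFT}(\mathbb{F}_q^n)$ is parametrized by arbitrary values on coset representatives. This is the form in which the paper actually uses the lemma later, for example when it chooses $\widehat{\boldsymbol{c}}_1\in\mathbb{F}_{q^{n-1}}$ freely in Theorem~\ref{Thm3.1}, or writes $P_{\widehat{\boldsymbol{c}}}$ as a sum of traces over representatives in Lemma~\ref{Lem6}.
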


\begin{proof}
Let $\operatorname{DFT}(\mathbb{F}_q^n)$ denote the image of $\mathbb{F}_q^n$ under the Discrete Fourier Transform. Define the set
$$\mathcal{S}=\big\{\,\boldsymbol{y} \in \mathbb{F}_{q^m}^n : y_k^q = y_{qk} \text{ for all } k\in [n]\,\big\}.$$
For any $\boldsymbol{x}\in\mathbb{F}_q^n$, we have
\[(\widehat{\boldsymbol{x}}_k)^q=\left(\sum_{j=0}^{n-1}x_j\omega^{jk}\right)^q=\sum_{j=0}^{n-1}x_j^q\omega^{qjk}=\sum_{j=0}^{n-1}x_j\omega^{qjk}=\widehat{\boldsymbol{x}}_{qk}.\]
Consequently, $\operatorname{DFT}(\mathbb{F}_q^n)\subset \mathcal{S}$.

Conversely, take an arbitrary $\boldsymbol{y}\in \mathcal{S}$. We compute the $q$-th power of $x_j$ as follows:
\[x_j^q = \left( \frac{1}{n} \sum_{k=0}^{n-1} y_k \omega^{-jk} \right)^q = \left(\frac{1}{n}\right)^q \sum_{k=0}^{n-1} y_k^q (\omega^{-jk})^q 
= \frac{1}{n} \sum_{k=0}^{n-1} y_{qk} \omega^{-jqk} =\frac{1}{n} \sum_{l=0}^{n-1} y_l \omega^{-jl} = x_j,\]
where $l=qk$. Hence $x_j \in \mathbb{F}_q$ for all $j$, which implies $\boldsymbol{x} \in \mathbb{F}_q^n$ and $\boldsymbol{y} = \operatorname{DFT}(\boldsymbol{x})$. Consequently, $\mathcal{S}\subset \operatorname{DFT}(\mathbb{F}_q^n)$.
\end{proof}

The rationale for using the Discrete Fourier Transform to study cyclically covering subspaces of $\mathbb{F}_q^n$ lies in the following two lemmas. Recall the cyclic shift operator $\tau$ on $\mathbb{F}_q^n$:
$$\tau: (x_0,x_1,\dots,x_{n-1})\mapsto (x_{n-1},x_0,x_1,\dots,x_{n-2}).$$

\begin{lem}\label{Lem01}
For any $k = 0,1,\dots,n-1$, we have
\begin{equation}
\widehat{\tau(\boldsymbol{x})}_k=\omega^k\cdot\widehat{\boldsymbol{x}}_k.
\end{equation}
\end{lem}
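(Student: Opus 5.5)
The plan is a direct computation from the definitions. First I will write out the coordinates of the shifted vector explicitly. By the definition of $\tau$, if $\boldsymbol{x}=(x_0,\dots,x_{n-1})$ then $\tau(\boldsymbol{x})=\boldsymbol{y}$ with $y_j=x_{j-1}$, indices taken modulo $n$ as fixed in the introduction. Then I will substitute this into the DFT formula:
\[
\widehat{\tau(\boldsymbol{x})}_k=\sum_{j=0}^{n-1} x_{j-1}\,\omega^{jk}.
\]

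The key step is the reindexing $i=j-1$. As $j$ runs over $[n]$, the index $i$ also runs over $[n]$ modulo $n$. This is legitimate because $\omega^n=1$, so $\omega^{jk}$ depends only on $j$ modulo $n$. In particular, the boundary term $j=0$, which involves $x_{n-1}$, carries the factor $\omega^{0}=\omega^{nk}$, so it matches the $i=n-1$ term correctly. With this substitution the sum becomes
\[
\sum_{i=0}^{n-1}x_i\,\omega^{(i+1)k}=\omega^k\sum_{i=0}^{n-1}x_i\,\omega^{ik}=\omega^k\,\widehat{\boldsymbol{x}}_k,
\]
which is the claimed identity.

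There is essentially no obstacle here. The only point requiring care is the wrap-around term, and that is handled by the periodicity $\omega^{n}=1$ of the primitive $n$-th root of unity fixed at the start of Section 2. As a remark, iterating the identity gives $\widehat{\tau^i(\boldsymbol{x})}_k=\omega^{ik}\widehat{\boldsymbol{x}}_k$ by induction on $i$. This is presumably the form used later: it shows that cyclic shifts act diagonally in the Fourier domain.
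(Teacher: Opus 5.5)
Your proposal is correct and follows the paper's own proof essentially verbatim. Both substitute $\tau(\boldsymbol{x})_j = x_{j-1}$ into the DFT, reindex with $i=j-1$, and factor out $\omega^k$; your explicit check of the wrap-around term via $\omega^{nk}=1$ just spells out what the paper leaves implicit in its convention that subscripts are taken modulo $n$.
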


\begin{proof}
Since $\tau(\boldsymbol{x})_j = x_{j-1}$ for $j = 0,1,\dots,n-1$, where all subscripts are taken modulo $n$, we obtain
\[\widehat{\tau(\boldsymbol{x})}_k= \sum_{j=0}^{n-1} \tau(\boldsymbol{x})_j \omega^{jk}= \sum_{j=0}^{n-1} x_{j-1} \omega^{jk}
= \sum_{i=0}^{n-1} x_i \omega^{(i+1)k}= \omega^k \cdot \sum_{i=0}^{n-1} x_i \omega^{ik}= \omega^k \cdot \widehat{\boldsymbol{x}}_k.\]
\end{proof}

\begin{lem}\label{Lem02}
For any $\boldsymbol{x}=(x_0,\dots,x_{n-1}),\,\boldsymbol{y}=(y_0,\dots,y_{n-1})\in\mathbb{F}_q^n$, let $(\boldsymbol{x},\boldsymbol{y})$ denote the standard inner product, i.e., $(\boldsymbol{x},\boldsymbol{y})=\sum_{i=0}^{n-1}x_iy_i.$ We have
\begin{equation}
(\boldsymbol{x},\boldsymbol{y})=\frac{1}{n}\sum_{k=0}^{n-1}\widehat{\boldsymbol{x}}_k\widehat{\boldsymbol{y}}_{-k}.
\end{equation}
\end{lem}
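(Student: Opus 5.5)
The plan is to prove the Parseval-type identity of Lemma~\ref{Lem02} by direct expansion of the right-hand side and the orthogonality of the powers of $\omega$. Substituting the definition of the DFT for both factors gives
\[
\sum_{k=0}^{n-1}\widehat{\boldsymbol{x}}_k\widehat{\boldsymbol{y}}_{-k}
=\sum_{k=0}^{n-1}\Big(\sum_{i=0}^{n-1}x_i\omega^{ik}\Big)\Big(\sum_{j=0}^{n-1}y_j\omega^{-jk}\Big)
=\sum_{i=0}^{n-1}\sum_{j=0}^{n-1}x_iy_j\sum_{k=0}^{n-1}\omega^{(i-j)k}.
\]
Here the index $-k$ is read modulo $n$. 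This is legitimate because $\omega^{j(-k)}$ depends only on $-k \bmod n$, since $\omega^n=1$.

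The key step is the evaluation of the inner sum $\sum_{k=0}^{n-1}\omega^{(i-j)k}$ for $i,j\in[n]$.

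When $i=j$, every term equals $1$, so the sum is $n$, computed in $\mathbb{F}_q$.

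When $i\neq j$, we have $0<|i-j|<n$, so $\zeta=\omega^{i-j}\neq 1$ because $\omega$ is a primitive $n$-th root of unity. Also $\zeta^n=1$. The geometric-series identity $(\zeta-1)\sum_{k=0}^{n-1}\zeta^k=\zeta^n-1=0$ then forces the sum to vanish.

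Hence the double sum collapses to $n\sum_{i}x_iy_i=n(\boldsymbol{x},\boldsymbol{y})$.

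It remains to divide by $n$. This is valid because $\gcd(n,q)=1$, so $n$ is invertible in $\mathbb{F}_q$, exactly as already used for the IDFT. The resulting identity holds in $\mathbb{F}_{q^m}$. Its left side lies in $\mathbb{F}_q$, so no further descent argument is needed.

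There is no real obstacle. The only points requiring care are the modular reading of the index $-k$ and the fact that $n\neq 0$ in $\mathbb{F}_q$. Both follow from the standing assumption $\gcd(n,q)=1$ and the primitivity of $\omega$ set up at the start of Section~2.
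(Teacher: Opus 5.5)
Your argument is correct and is essentially the paper's proof. Both reduce the identity to the orthogonality relation that $\sum_{k=0}^{n-1}\omega^{rk}$ equals $n$ when $r\equiv 0\pmod n$ and $0$ otherwise, together with the invertibility of $n$ in $\mathbb{F}_q$; the only difference is direction, since the paper substitutes the inverse DFT into $(\boldsymbol{x},\boldsymbol{y})$ and sums over the coordinate index, whereas you substitute the forward DFT into the right-hand side and sum over the frequency index, which has the minor advantage of not presupposing the inversion formula.
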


\begin{proof}
We proceed by direct computation.
\[(\boldsymbol{x},\boldsymbol{y})=\sum_{i=0}^{n-1}x_iy_i=\sum_{i=0}^{n-1}\left(\frac{1}{n}\sum_{k=0}^{n-1}\widehat{\boldsymbol{x}}_k\omega^{-ik}\right)\left(\frac{1}{n}\sum_{l=0}^{n-1}\widehat{\boldsymbol{y}}_l\omega^{-il}\right)=\frac{1}{n^2}\sum_{k=0}^{n-1}\sum_{l=0}^{n-1}\widehat{\boldsymbol{x}}_k\widehat{\boldsymbol{y}}_l\sum_{i=0}^{n-1}\omega^{-i(k+l)}.\]
By the orthogonality relations of the $n$-th roots of unity, we have
\[\sum_{i=0}^{n-1}\omega^{-i(k+l)}=
\begin{cases}
n, & k+l\equiv 0\pmod{n}, \\
0, & \text{otherwise}.
\end{cases}\]
Thus, the inner sum is non-zero if and only if $l\equiv -k\pmod{n}$. Substituting this into the expression yields
\begin{equation*}
(\boldsymbol{x},\boldsymbol{y})=\frac{1}{n}\sum_{k=0}^{n-1}\widehat{\boldsymbol{x}}_k\widehat{\boldsymbol{y}}_{-k}.
\end{equation*}
\end{proof}

Let $\gcd(n,q)=1$. For any integer $0\le j\le n-1$, the cyclotomic coset of $j$ modulo $n$ with respect to $q$ is defined as
$$C_j=\{j,jq,jq^2,\dots,jq^{m_j-1}\}\pmod{n},$$
where $m_j$ denotes the minimal positive integer satisfying $jq^{m_j}\equiv j\pmod{n}$.

Since \(\gcd(n,q)=1\), the polynomial \(x^n-1\) factors into a product of pairwise coprime irreducible polynomials over $\mathbb{F}_q[x]$, i.e.,
\[x^n-1=f_1(x)f_2(x)\cdots f_r(x).\]
Each irreducible factor $f_s(x)$ corresponds uniquely to a cyclotomic coset $C_j$. If $f_s(\omega^j)=0$, then all roots of $f_s(x)$ are precisely $\{\omega^k\mid k\in C_j\}$. We denote this cyclotomic coset by $C(s)$ to indicate that it corresponds to $f_s(x)$.

\begin{lem}\label{Lem03}
We have the following direct sum decomposition.
\[\mathbb{F}_q^n=W_1\oplus\cdots\oplus W_r,\]
where $W_i=\ker f_i(\tau)=\{\boldsymbol{x}\in\mathbb{F}_q^n :  f_i(\tau)(\boldsymbol{x})=\boldsymbol{0}\}$.
\end{lem}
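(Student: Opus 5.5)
The decomposition $\mathbb{F}_q^n=W_1\oplus\cdots\oplus W_r$ is the primary decomposition of $\mathbb{F}_q^n$ with respect to the linear operator $\tau$. The plan is to use the polynomial ring in $\tau$ together with a Bézout argument. Since $\tau^n$ is the identity, $\tau$ satisfies $x^n-1=f_1(x)\cdots f_r(x)$. Because $\gcd(n,q)=1$, the factors $f_i$ are pairwise coprime.

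First, for each $i$ set $g_i(x)=\prod_{s\neq i}f_s(x)=(x^n-1)/f_i(x)$. The polynomials $g_1,\dots,g_r$ have no common irreducible factor: any irreducible factor would have to equal some $f_j$, but $f_j\nmid g_j$ because the $f_s$ are pairwise coprime and irreducible. Since $\mathbb{F}_q[x]$ is a principal ideal domain, there exist $a_1,\dots,a_r\in\mathbb{F}_q[x]$ with $\sum_i a_i g_i=1$. Define $E_i=a_i(\tau)g_i(\tau)$. Substituting $\tau$ gives $\sum_i E_i=\mathrm{id}$.

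Second, I would show that every vector lies in the sum of the $W_i$. For $\boldsymbol{x}\in\mathbb{F}_q^n$ we have $\boldsymbol{x}=\sum_i E_i\boldsymbol{x}$. Moreover
\[
f_i(\tau)E_i\boldsymbol{x}=a_i(\tau)\,(\tau^n-\mathrm{id})\,\boldsymbol{x}=\boldsymbol{0},
\]
so $E_i\boldsymbol{x}\in W_i$. Hence $\mathbb{F}_q^n=W_1+\cdots+W_r$.

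Third, I would show the sum is direct. Suppose $\sum_i\boldsymbol{w}_i=\boldsymbol{0}$ with each $\boldsymbol{w}_i\in W_i$. For $j\neq i$, the polynomial $g_i$ is divisible by $f_j$, so $g_i(\tau)\boldsymbol{w}_j=\boldsymbol{0}$. Applying $E_i$ to the relation therefore gives $E_i\boldsymbol{w}_i=\boldsymbol{0}$. For $j\neq i$, the same divisibility gives $E_j\boldsymbol{w}_i=\boldsymbol{0}$. Combining these,
\[
\boldsymbol{w}_i=\sum_j E_j\boldsymbol{w}_i=E_i\boldsymbol{w}_i=\boldsymbol{0}.
\]
So every $\boldsymbol{w}_i$ vanishes and the sum is direct. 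Note that $W_i$ is $\tau$-invariant, because $\tau$ commutes with $f_i(\tau)$.

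None of these steps is hard. The only point needing care is the coprimality of the $g_i$, which is exactly where the hypothesis $\gcd(n,q)=1$ enters: it makes $x^n-1$ separable, so the $f_i$ are distinct.

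An alternative route, closer in spirit to the paper, is through the DFT. By Lemma \ref{Lem01}, $\widehat{f_i(\tau)\boldsymbol{x}}_k=f_i(\omega^k)\widehat{\boldsymbol{x}}_k$. Thus $W_i$ consists of exactly those $\boldsymbol{x}$ whose DFT is supported on $C(i)$, since $f_i(\omega^k)=0$ precisely when $k\in C(i)$. The decomposition then follows by splitting $\widehat{\boldsymbol{x}}$ according to the cosets. Each piece still lies in the image described in Lemma \ref{Lem0}, because cyclotomic cosets are closed under multiplication by $q$.
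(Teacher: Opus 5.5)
Your proof is correct and is essentially the paper's argument: the paper uses the same B\'ezout identity $\sum_i a_i g_i = 1$ with $g_i=\prod_{s\neq i} f_s$ to show that the $W_i$ span, exactly as you do. The only difference is in the directness step, where you use the operators $E_i=a_i(\tau)g_i(\tau)$ as projections, whereas the paper applies a second B\'ezout relation $a f_i + b g_i = 1$ to a vector in $W_i\cap\sum_{j\neq i}W_j$; both routes are equally valid.
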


\begin{proof}
We first prove that $\mathbb{F}_q^n = W_1+W_2+\cdots+W_r$. For each integer $1\le i\le r$, define the auxiliary polynomial 
$g_i(x)=\prod_{j\neq i}f_j(x).$
It is immediate that $f_i(x)g_i(x)=x^n-1$. Since all $f_i(x)$ are pairwise coprime, we have $\gcd\big(g_1(x),g_2(x),\dots,g_r(x)\big)=1$. Thus, there exist polynomials $a_1(x),a_2(x),\dots,a_r(x)\in\mathbb{F}_q[x]$ satisfying
$$\sum_{i=1}^r a_i(x)g_i(x)=1.$$
Substituting the cyclic shift operator $\tau$ for the indeterminate $x$ yields the operator identity
$$\sum_{i=1}^r a_i(\tau)g_i(\tau)=I,$$
where $I$ denotes the identity operator on $\mathbb{F}_q^n$. For an arbitrary vector $\boldsymbol{v}\in\mathbb{F}_q^n$, we have
$$\boldsymbol{v}=I(\boldsymbol{v})=\sum_{i=1}^r a_i(\tau)g_i(\tau)(\boldsymbol{v}).$$
Set $\boldsymbol{v}_i=a_i(\tau)g_i(\tau)(\boldsymbol{v})$. A direct calculation gives
\[f_i(\tau)(\boldsymbol{v}_i) = f_i(\tau)a_i(\tau)g_i(\tau)(\boldsymbol{v}) = a_i(\tau)\cdot\big(f_i(\tau)g_i(\tau)\big)(\boldsymbol{v}) = a_i(\tau)\cdot(\tau^n-I)(\boldsymbol{v}) = a_i(\tau)(\boldsymbol{0}) = \boldsymbol{0}.\]
Hence $\boldsymbol{v}_i\in W_i$, which implies every vector $\boldsymbol{v}$ can be written as a sum of elements from $W_1,W_2,\dots,W_r$, i.e., $\mathbb{F}_q^n=W_1+W_2+\cdots+W_r$.

We now verify this sum is direct, namely
$$W_i\cap\left(\sum_{j\neq i}W_j\right)=\{\boldsymbol{0}\},\quad \forall\,1\le i\le r.$$
Take any $\boldsymbol{v}\in W_i\cap\left(\sum_{j\neq i}W_j\right)$. Since $\boldsymbol{v}\in W_i$, we have $f_i(\tau)(\boldsymbol{v})=\boldsymbol{0}$. Meanwhile, $\boldsymbol{v}\in\sum_{j\neq i}W_j$ ensures there exist $\boldsymbol{v}_j\in W_j$ ($j\neq i$) such that $\boldsymbol{v}=\sum_{j\neq i}\boldsymbol{v}_j$, with $f_j(\tau)(\boldsymbol{v}_j)=\boldsymbol{0}$ for all $j\neq i$. Recall $g_i(x)=\prod_{j\neq i}f_j(x)$; then
$$g_i(\tau)(\boldsymbol{v})=g_i(\tau)\left(\sum_{j\neq i}\boldsymbol{v}_j\right)=\sum_{j\neq i}g_i(\tau)(\boldsymbol{v}_j)=\boldsymbol{0}.$$
As $\gcd\big(f_i(x),g_i(x)\big)=1$, there exist polynomials $a(x),b(x)\in\mathbb{F}_q[x]$ such that
$$a(x)f_i(x)+b(x)g_i(x)=1.$$
Applying the same operator substitution:
\[\boldsymbol{v} = I(\boldsymbol{v}) = a(\tau)f_i(\tau)(\boldsymbol{v}) + b(\tau)g_i(\tau)(\boldsymbol{v}) = a(\tau)(\boldsymbol{0}) + b(\tau)(\boldsymbol{0}) = \boldsymbol{0}.\]
Therefore $W_i\cap\big(\sum_{j\neq i}W_j\big)=\{\boldsymbol{0}\}$ holds for all indices $i$, so the decomposition is a direct sum.
\end{proof}

Let $\boldsymbol{x}\in\mathbb{F}_q^n$. If $f_i(\tau)(\boldsymbol{x})=\boldsymbol{0}$, then $f_i(x)$ is called an annihilating polynomial of $\boldsymbol{x}$, and in this case $\boldsymbol{x}\in W_i$. For $\boldsymbol{x}\in\mathbb{F}_q^n$, the support of its Discrete Fourier Transform $\widehat{\boldsymbol{x}}\in\mathbb{F}_{q^m}^n$ is defined as
$$\operatorname{supp}(\widehat{\boldsymbol{x}})=\{k\in[n]:\widehat{\boldsymbol{x}}_k\neq 0\}.$$
The support \(\operatorname{supp}(\widehat{\boldsymbol{x}})\) of \(\boldsymbol{x}\) plays an important role in the subsequent analysis. We first present the following lemma.

\begin{lem}\label{Lem1}
For any $\boldsymbol{x}\in\mathbb{F}_q^n$, the support $\operatorname{supp}(\widehat{\boldsymbol{x}})$ is a union of cyclotomic cosets.
\end{lem}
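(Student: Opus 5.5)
The plan is to use the Frobenius relation from Lemma \ref{Lem0}. Since $\boldsymbol{x}\in\mathbb{F}_q^n$, its transform $\widehat{\boldsymbol{x}}$ lies in $\operatorname{DFT}(\mathbb{F}_q^n)$. Hence $\widehat{\boldsymbol{x}}_{qk}=(\widehat{\boldsymbol{x}}_k)^q$ for every $k\in[n]$, with indices taken modulo $n$. I will show that $\operatorname{supp}(\widehat{\boldsymbol{x}})$ is closed under multiplication by $q$ modulo $n$, and then conclude that it is a union of cyclotomic cosets.

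The key steps run as follows.

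\begin{enumerate}
\item Take $k\in\operatorname{supp}(\widehat{\boldsymbol{x}})$, so $\widehat{\boldsymbol{x}}_k\neq 0$ in the field $\mathbb{F}_{q^m}$.
\item Since a field has no zero divisors, $(\widehat{\boldsymbol{x}}_k)^q\neq 0$. Therefore $\widehat{\boldsymbol{x}}_{qk}\neq 0$, i.e.\ $qk \bmod n$ also lies in the support.
\item Iterating, $kq^i \bmod n$ lies in the support for all $i\ge 0$. This gives $C_k\subseteq\operatorname{supp}(\widehat{\boldsymbol{x}})$.
\item Since every $k$ belongs to its own coset $C_k$, the support equals $\bigcup_{k\in\operatorname{supp}(\widehat{\boldsymbol{x}})}C_k$, which is a union of cyclotomic cosets.
\end{enumerate}

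This argument does not need the reverse implication, that $\widehat{\boldsymbol{x}}_{qk}\neq0$ forces $\widehat{\boldsymbol{x}}_k\neq0$. Forward closure already yields the whole coset. If one wants the reverse direction anyway, it also holds: since $\gcd(q,n)=1$, there is some $t$ with $q^t\equiv 1\pmod n$, so $k$ is reached from $qk$ by further multiplication by $q$.

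No step is a real obstacle. The only point needing care is that the relation $y_k^q=y_{qk}$ is read with indices modulo $n$, and that $\gcd(q,n)=1$ makes multiplication by $q$ a permutation of $[n]$. That permutation property is what makes the cosets $C_j$ well defined and partition $[n]$.
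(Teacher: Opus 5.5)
Your proof is correct and follows the paper's route: the paper proves this in one line from the Frobenius identity $(\widehat{\boldsymbol{x}}_k)^q=\widehat{\boldsymbol{x}}_{qk}$. Your argument that a nonzero entry at $k$ forces nonzero entries along the whole orbit $kq^i \bmod n$ simply makes explicit the closure step that the paper leaves implicit.
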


\begin{proof}
This follows from the equality below:
\[(\widehat{\boldsymbol{x}}_k)^q=\left(\sum_{j=0}^{n-1}x_j\omega^{jk}\right)^q=\sum_{j=0}^{n-1}x_j^q\omega^{qjk}=\sum_{j=0}^{n-1}x_j\omega^{qjk}=\widehat{\boldsymbol{x}}_{qk}.\]
\end{proof}

\begin{lem}\label{Lem2}
For any \(1\leq i\leq r\), if $\boldsymbol{x}\in W_i$ and $\boldsymbol{x}\neq \boldsymbol{0}$, then $\operatorname{supp}(\widehat{\boldsymbol{x}})=C(i)$. Furthermore, suppose $\boldsymbol{x}\in W_{i_1}\oplus\cdots\oplus W_{i_s}$ with $\boldsymbol{x}=\boldsymbol{\alpha}_1+\cdots+\boldsymbol{\alpha}_s$ and $\boldsymbol{\alpha}_t\neq \boldsymbol{0}$ for $1\leq t\leq s$. Then $\operatorname{supp}(\widehat{\boldsymbol{x}})=\bigcup_{t=1}^s C(i_t)$, where $1\leq i_1,\dots,i_s\leq r$.
\end{lem}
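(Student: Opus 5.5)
The plan is to translate the condition $f_i(\tau)(\boldsymbol{x})=\boldsymbol{0}$ into the Fourier domain using Lemma \ref{Lem01}, and then to use linearity of the DFT together with Lemma \ref{Lem1} for the second assertion.

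\textbf{The Fourier form of the annihilation condition.} By iterating Lemma \ref{Lem01}, we get $\widehat{\tau^j(\boldsymbol{x})}_k=\omega^{jk}\widehat{\boldsymbol{x}}_k$. By linearity of the DFT, for any polynomial $g\in\mathbb{F}_q[x]$ this gives
\[\widehat{g(\tau)(\boldsymbol{x})}_k=g(\omega^k)\,\widehat{\boldsymbol{x}}_k,\qquad k\in[n].\]
Since the DFT is injective, $\boldsymbol{x}\in W_i$ if and only if $f_i(\omega^k)\widehat{\boldsymbol{x}}_k=0$ for all $k$. Because $x^n-1$ is separable (as $\gcd(n,q)=1$), the roots $\omega^k$ of $f_i$ are exactly those with $k\in C(i)$. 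Hence $f_i(\omega^k)\neq0$ for $k\notin C(i)$, and so $\operatorname{supp}(\widehat{\boldsymbol{x}})\subseteq C(i)$.

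\textbf{Equality for $\boldsymbol{x}\in W_i$.} Suppose $\boldsymbol{x}\neq\boldsymbol{0}$. By injectivity of the DFT the support is nonempty. By Lemma \ref{Lem1} it is a union of cyclotomic cosets, and it is contained in the single coset $C(i)$, so it must equal $C(i)$. The same conclusion also follows directly from the relation $\widehat{\boldsymbol{x}}_{qk}=\widehat{\boldsymbol{x}}_k^q$: a nonzero value at one index of $C(i)$ propagates to every index of $C(i)$.

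\textbf{The decomposition case.} Let $\boldsymbol{x}=\boldsymbol{\alpha}_1+\cdots+\boldsymbol{\alpha}_s$ with $\boldsymbol{\alpha}_t\in W_{i_t}$ nonzero and the indices $i_t$ distinct. By linearity, $\widehat{\boldsymbol{x}}=\sum_t\widehat{\boldsymbol{\alpha}_t}$. By the first part, the summands have supports $C(i_t)$, which are pairwise disjoint because distinct irreducible factors correspond to distinct cosets. So at each $k\in C(i_t)$ only one summand contributes, giving $\widehat{\boldsymbol{x}}_k=(\widehat{\boldsymbol{\alpha}_t})_k\neq0$. For $k$ outside $\bigcup_t C(i_t)$, every summand vanishes. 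This yields $\operatorname{supp}(\widehat{\boldsymbol{x}})=\bigcup_{t=1}^s C(i_t)$.

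The only step needing care is the first: justifying that $f_i(\omega^k)\neq0$ exactly when $k\notin C(i)$. This rests on the stated correspondence between the irreducible factors and the cyclotomic cosets, and on the fact that the roots $\omega^0,\dots,\omega^{n-1}$ of $x^n-1$ are distinct.
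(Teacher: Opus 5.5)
Your proposal is correct and follows essentially the same route as the paper. You transfer $f_i(\tau)(\boldsymbol{x})=\boldsymbol{0}$ to $f_i(\omega^k)\widehat{\boldsymbol{x}}_k=0$ via Lemma~\ref{Lem01}, deduce $\operatorname{supp}(\widehat{\boldsymbol{x}})\subseteq C(i)$, upgrade this to equality using nonemptiness and Lemma~\ref{Lem1}, and handle the direct sum by linearity and disjointness of the cosets; in that last step, reading off $\widehat{\boldsymbol{x}}_k=(\widehat{\boldsymbol{\alpha}_t})_k\neq 0$ coordinatewise from the first part is slightly more explicit than the paper's appeal to Lemma~\ref{Lem1}.
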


\begin{proof}
Since $\boldsymbol{x}\in W_i$, we have $f_i(\tau)(\boldsymbol{x})=\boldsymbol{0}$.
Given $\widehat{\tau^i(\boldsymbol{x})}_k=\omega^{ik}\widehat{\boldsymbol{x}}_k$, it yields
$$\widehat{f_i(\tau)(\boldsymbol{x})}_k=f_i(\omega^k)\cdot\widehat{\boldsymbol{x}}_k=0.$$
Hence, $\widehat{\boldsymbol{x}}_k$ can be non-zero only if $f_i(\omega^k)=0$. Consequently, $\operatorname{supp}(\widehat{\boldsymbol{x}})\subset C(i)$. Since \(\boldsymbol{x}\neq \boldsymbol{0}\), we have \(\operatorname{supp}(\widehat{\boldsymbol{x}})\neq\emptyset\). Combining Lemma \ref{Lem1}, we have $\operatorname{supp}(\widehat{\boldsymbol{x}})=C(i)$.

Suppose $\boldsymbol{x}\in W_{i_1}\oplus\cdots\oplus W_{i_s}$, then $\boldsymbol{x}=\boldsymbol{\alpha}_1+\cdots+\boldsymbol{\alpha}_s$, where each $\boldsymbol{\alpha}_t\neq\boldsymbol{0}$ and $\boldsymbol{\alpha}_t\in W_{i_t},1\leq t\leq s$. We have $\widehat{\boldsymbol{x}}=\widehat{\boldsymbol{\alpha}}_1+\cdots+\widehat{\boldsymbol{\alpha}}_s$, and $\operatorname{supp}(\widehat{\boldsymbol{\alpha}}_t)\cap\operatorname{supp}(\widehat{\boldsymbol{\alpha}}_k)=\emptyset$ for $t\neq k$.
Hence
\[\operatorname{supp}(\widehat{\boldsymbol{x}})=\bigcup_{t=1}^s\operatorname{supp}(\widehat{\boldsymbol{\alpha}}_t)\subset \bigcup_{t=1}^s C(i_t).\]
Combining Lemma \ref{Lem1}, we have $\operatorname{supp}(\widehat{\boldsymbol{x}})=\bigcup_{t=1}^s C(i_t)$, $1\leq i_1,...,i_s\leq r$.
\end{proof}

Let $U$ be a subspace of $\mathbb{F}_q^n$. Define $\widehat{U}=\{\widehat{\boldsymbol{u}}: \boldsymbol{u}\in U\}$.
The support of $U$ is given by
$$\operatorname{supp}(\widehat{U})=\{j\in[n]: \exists\,\boldsymbol{u}\in U,\,\widehat{\boldsymbol{u}}_j\neq 0\}=\bigcup_{\boldsymbol{u}\in U}\operatorname{supp}(\boldsymbol{\widehat{u}}).$$

\begin{lem}\label{Lem3}
For any $1\le i\le n-1$, we have $\operatorname{supp}(\widehat{\tau^i(U)})=\operatorname{supp}(\widehat{U})$.
\end{lem}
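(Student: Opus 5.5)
The plan is to compare supports coordinatewise using Lemma \ref{Lem01}. Fix $i$ with $1\le i\le n-1$ and take any $\boldsymbol{u}\in U$. Iterating Lemma \ref{Lem01} $i$ times gives
\[
\widehat{\tau^i(\boldsymbol{u})}_k=\omega^{ik}\,\widehat{\boldsymbol{u}}_k,\qquad k\in[n].
\]
Since $\omega$ is a root of unity, $\omega^{ik}\neq 0$ in $\mathbb{F}_{q^m}$. Hence $\widehat{\tau^i(\boldsymbol{u})}_k\neq 0$ if and only if $\widehat{\boldsymbol{u}}_k\neq 0$. This yields the pointwise identity $\operatorname{supp}(\widehat{\tau^i(\boldsymbol{u})})=\operatorname{supp}(\widehat{\boldsymbol{u}})$ for every $\boldsymbol{u}\in U$.

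Next I pass from vectors to subspaces. Because $\tau^i$ is a bijection of $\mathbb{F}_q^n$, every element of $\tau^i(U)$ has the form $\tau^i(\boldsymbol{u})$ for some $\boldsymbol{u}\in U$, and conversely. Applying the definition of $\operatorname{supp}(\widehat{U})$ as a union over elements, I get
\[
\operatorname{supp}(\widehat{\tau^i(U)})=\bigcup_{\boldsymbol{u}\in U}\operatorname{supp}(\widehat{\tau^i(\boldsymbol{u})})=\bigcup_{\boldsymbol{u}\in U}\operatorname{supp}(\widehat{\boldsymbol{u}})=\operatorname{supp}(\widehat{U}).
\]

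I do not expect a real obstacle. The only point to state explicitly is that multiplying by the nonzero scalar $\omega^{ik}$ in the field $\mathbb{F}_{q^m}$ cannot create or destroy a nonzero coordinate.
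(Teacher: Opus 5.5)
Your proof is correct and matches the paper's argument: the paper also uses $\widehat{\tau^i(\boldsymbol{u})}_k=\omega^{ik}\widehat{\boldsymbol{u}}_k$ with $\omega^{ik}\neq 0$ to get equality of supports vector by vector. Your step passing from vectors to the union over $U$, using that $\boldsymbol{u}\mapsto\tau^i(\boldsymbol{u})$ is a bijection $U\to\tau^i(U)$, spells out what the paper leaves implicit.
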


\begin{proof}
Since $\widehat{\tau^i(\boldsymbol{u})}_k=\omega^{ik}\widehat{\boldsymbol{u}}_k$ and $\omega^{ik}\neq 0$, we get $\widehat{\tau^i(\boldsymbol{u})}_k\neq 0 \iff \widehat{\boldsymbol{u}}_k\neq 0$. Thus $\operatorname{supp}(\widehat{\tau^i(U)})=\operatorname{supp}(\widehat{U})$.
\end{proof}

\begin{lem}\label{Lem4}
Let \(f(x)\) be an irreducible divisor of \(x^n-1\), set \(\deg f(x)=d\) and \(W=\ker f(\tau)\). Then for any \(\boldsymbol{\alpha}\in W\setminus\{\boldsymbol 0\}\), the set \(\{\boldsymbol{\alpha},\tau(\boldsymbol{\alpha}),\dots,\tau^{d-1}(\boldsymbol{\alpha})\}\) forms a basis of $W$. Moreover, for all \(\boldsymbol{\beta}\in W\setminus\{\boldsymbol 0\}\), we have
\(\operatorname{supp}(\widehat{\boldsymbol{\beta}})=\operatorname{supp}(\widehat{\boldsymbol{\alpha}})=\operatorname{supp}(\widehat W).\)
\end{lem}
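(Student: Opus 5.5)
The plan is to first compute $\dim W$, then prove linear independence of the $d$ shifts, and finally handle the support statement using Lemmas \ref{Lem1}, \ref{Lem2} and \ref{Lem3}.

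\textbf{Step 1: $\dim W=d$.} Let $f=f_i$, so $W=W_i$. By Lemma \ref{Lem2}, every nonzero element of $W$ has DFT support exactly $C(i)$, so $\widehat W$ is contained in the set of vectors supported on $C(i)$. Conversely, Lemma \ref{Lem0} together with the relation $\widehat{f(\tau)\boldsymbol{x}}_k=f(\omega^k)\widehat{\boldsymbol{x}}_k$ shows that $\boldsymbol{x}\in W$ if and only if $\widehat{\boldsymbol{x}}$ vanishes off $C(i)$. Such $\widehat{\boldsymbol{x}}$ are determined by one coordinate $y_j\in\mathbb{F}_{q^{m_j}}$ with $j\in C(i)$: the constraint $y_k^q=y_{qk}$ forces the remaining coordinates on the coset, and forces $y_j^{q^{m_j}}=y_j$. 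Hence $|W|=q^{m_j}=q^{d}$, using $|C(i)|=\deg f=d$, and so $\dim_{\mathbb{F}_q}W=d$.

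\textbf{Step 2: the shifts are independent.} Let $\boldsymbol{\alpha}\in W\setminus\{\boldsymbol 0\}$ and suppose $g(\tau)\boldsymbol{\alpha}=\boldsymbol 0$ for some $g\in\mathbb{F}_q[x]$ with $\deg g<d$, $g\neq 0$. Applying the DFT gives $g(\omega^k)\widehat{\boldsymbol{\alpha}}_k=0$ for all $k$. Since $\widehat{\boldsymbol{\alpha}}_k\neq 0$ for $k\in C(i)$, every $\omega^k$ with $k\in C(i)$ is a root of $g$. These are the $d$ distinct roots of $f$, so $f\mid g$, which contradicts $\deg g<d$. Thus $\boldsymbol{\alpha},\tau(\boldsymbol{\alpha}),\dots,\tau^{d-1}(\boldsymbol{\alpha})$ are linearly independent. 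They lie in $W$ because $W$ is $\tau$-invariant, as $\tau$ commutes with $f(\tau)$. By Step 1 they therefore form a basis of $W$.

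An alternative to Step 1 is to note that $W$ is a $\mathbb{F}_q[x]/(f)$-module, i.e.\ a vector space over the field $\mathbb{F}_{q^d}$, and that it is one-dimensional over that field. That version needs the $W_i$-dimensions to sum to $n$, which follows from Lemma \ref{Lem03} combined with the upper bound $\dim W_i\le |C(i)|$ given by the DFT argument. I expect the main obstacle to be this dimension count; the rest is routine.

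\textbf{Step 3: supports.} For any $\boldsymbol{\beta}\in W\setminus\{\boldsymbol 0\}$, Lemma \ref{Lem2} gives $\operatorname{supp}(\widehat{\boldsymbol{\beta}})=C(i)=\operatorname{supp}(\widehat{\boldsymbol{\alpha}})$. Since $\operatorname{supp}(\widehat W)$ is the union of these supports over $W$, it also equals $C(i)$.
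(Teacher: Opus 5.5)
Your proof is correct, but it takes a different route from the paper at each step.

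\textbf{How the paper argues.}
\begin{itemize}
\item For independence it argues purely algebraically. The minimal polynomial $m_{\boldsymbol{\alpha}}$ of $\boldsymbol{\alpha}$ under $\tau$ divides the irreducible $f$ and is nonconstant, so $m_{\boldsymbol{\alpha}}=f$. Hence no nonzero polynomial of degree less than $d$ annihilates $\boldsymbol{\alpha}$.
\item It then simply asserts $\dim W=d$ without justification.
\item For supports it writes $\boldsymbol{\beta}=g(\tau)(\boldsymbol{\alpha})$, so $\widehat{\boldsymbol{\beta}}_k=g(\omega^k)\widehat{\boldsymbol{\alpha}}_k$. Equality then follows from two symmetric inclusions.
\end{itemize}

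\textbf{How you argue.} You work on the Fourier side throughout.
\begin{itemize}
\item Your Step 1 supplies the dimension count that the paper takes for granted.
\item In Step 2, a nontrivial relation among the $d$ shifts gives a nonzero $g$ of degree less than $d$ vanishing at the $d$ distinct points $\omega^k$, $k\in C(i)$, which is impossible.
\item Your Step 3 reads the common support $C(i)$ directly off Lemma \ref{Lem2}. That lemma in fact already contains the support claim.
\end{itemize}

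\textbf{What each route buys.} The paper's independence argument uses neither the DFT, nor Lemma \ref{Lem2}, nor $\gcd(n,q)=1$. Yours is complete, since it actually proves $\dim W=d$, and it identifies the common support explicitly.

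\textbf{One small tightening in Step 1.} ``Determined by one coordinate'' by itself only gives $|W|\le q^{d}$. You can close this in either of two ways:
\begin{itemize}
\item note that each $y_j\in\mathbb{F}_{q^{d}}$ extends consistently via $y_{jq^s}=y_j^{q^s}$, which is well defined since $y_j^{q^{d}}=y_j$; or
\item observe that this upper bound, combined with the $d$ independent vectors from Step 2, already forces $\dim W=d$.
\end{itemize}
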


\begin{proof}
For any \(\boldsymbol{\alpha}\in W\setminus\{\boldsymbol 0\}\), let $m_{\boldsymbol{\alpha}}(x)\in\mathbb{F}_q[x]$ denote the minimal polynomial of $\boldsymbol{\alpha}$ satisfying $m_{\boldsymbol{\alpha}} (\tau)(\boldsymbol{\alpha})=\boldsymbol{0}$.
Since $f(\tau)(\boldsymbol{\alpha})=\boldsymbol{0}$, write the polynomial division $f(x)=q(x)m_{\boldsymbol{\alpha}}(x)+r(x)$ with $\deg r(x)<\deg m_{\boldsymbol{\alpha}}(x)$.
Substitute the operator $\tau$ and act on $\alpha$:
$$
\boldsymbol{0}=f(\tau)(\boldsymbol{\alpha})=q(\tau)m_{\boldsymbol{\alpha}}(\tau)(\boldsymbol{\alpha})+r(\tau)(\boldsymbol{\alpha})=r(\tau)(\boldsymbol{\alpha}).
$$
By minimality of $m_{\boldsymbol{\alpha}}(x)$, we deduce $r(x)=0$, hence $m_{\boldsymbol{\alpha}}(x)\mid f(x)$. As $f(x)$ is irreducible, $m_{\boldsymbol{\alpha}}(x)=f(x)$.

Suppose there exist $a_0,a_1,\dots,a_{d-1}\in\mathbb{F}_q$ such that
$$a_0\boldsymbol{\alpha}+a_1\tau(\boldsymbol{\alpha})+\cdots+a_{d-1}\tau^{d-1}(\boldsymbol{\alpha})=\boldsymbol{0}.$$
Define $g(x)=a_0+a_1x+\cdots+a_{d-1}x^{d-1}$, so $\deg g(x)\le d-1$ and $g(\tau)(\boldsymbol{\alpha})=\boldsymbol{0}$.
From $m_{\boldsymbol{\alpha}}(x)=f(x)$ with $\deg f(x)=d$, we get $g(x)=0$, which yields linear independence of $\{\boldsymbol{\alpha},\tau(\boldsymbol{\alpha}),\dots,\tau^{d-1}(\boldsymbol{\alpha})\}$.
Since $\dim W=d$, this set forms a basis of $W$.
The above argument holds for all nonzero $\boldsymbol{\alpha}\in W$.

Now take arbitrary $\boldsymbol{\beta}\in W$, then
$$\boldsymbol{\beta}=k_0\boldsymbol{\alpha}+k_1\tau(\boldsymbol{\alpha})+\dots+k_{d-1}\tau^{d-1}(\boldsymbol{\alpha})=g(\tau)(\boldsymbol{\alpha}),$$
where $g(x)=k_0+k_1x+\dots+k_{d-1}x^{d-1}\in\mathbb{F}_q[x]$. We have $\widehat{\boldsymbol{\beta}}_k=\widehat{g(\tau)(\boldsymbol{\alpha})}_k=g(\omega^k)\widehat{\boldsymbol{\alpha}}_k$, hence $\operatorname{supp}(\widehat{\boldsymbol{\beta}})\subseteq\operatorname{supp}(\widehat{\boldsymbol{\alpha}})$. If $\boldsymbol{\beta}\neq\boldsymbol{0}$, then $\{\boldsymbol{\beta},\tau(\boldsymbol{\beta}),\dots,\tau^{d-1}(\boldsymbol{\beta})\}$ is also a basis for $W$. Repeating the above reasoning gives $\operatorname{supp}(\widehat{\boldsymbol{\alpha}})\subseteq\operatorname{supp}(\widehat{\boldsymbol{\beta}})$. Consequently, $\operatorname{supp}(\widehat{\boldsymbol{\alpha}})=\operatorname{supp}(\widehat{\boldsymbol{\beta}})$ for all nonzero $\boldsymbol{\alpha},\boldsymbol{\beta}\in W\setminus\{\boldsymbol 0\}$. It is now immediate that \(\operatorname{supp}(\widehat{W})=\operatorname{supp}(\widehat{\boldsymbol{\alpha}})\).
\end{proof}

As our main concern is whether \(h_q(n)=0\), we restrict our attention to subspaces of dimension \(n-1\). Assume that \(V_{\boldsymbol{\alpha}}\) is an \((n-1)\) dimensional subspace of \(\mathbb{F}_{q}^n\), i.e.,
\[V_{\boldsymbol{\alpha}} = \{\boldsymbol{x} \in \mathbb{F}_q^n : (\boldsymbol{x},\boldsymbol{\alpha}) = 0\}~\text{for some}~\boldsymbol{\alpha} \in \mathbb{F}_q^n\setminus \{\boldsymbol{0}\}.\]

It is worth mentioning that Aaronson, Groenland and Johnston \cite{Aaronson-Groenland-Johnston} introduced the notion of vectors that {\it work together} in 2019. Precisely, a vector \(\boldsymbol{\alpha} \in \mathbb{F}_q^n\) is said to {\it work} if for every vector \(\boldsymbol{x} \in \mathbb{F}_q^n\), there exists an integer \(i\in [n]\) such that the inner product \[(\boldsymbol{\alpha}, \tau^i (\boldsymbol{x})) = 0.\]
This condition is equivalent to \(V_{\boldsymbol{\alpha}}\) being a cyclically covering subspace.

A collection of vectors \(\boldsymbol{\alpha}_1,\boldsymbol{\alpha}_2,\dots,\boldsymbol{\alpha}_k\) is said to {\it work together} if for every vector \(\boldsymbol{x} \in \mathbb{F}_q^n\), there exists an integer \(i \in [n]\) such that
\[(\boldsymbol{\alpha}_1 , \tau^i (\boldsymbol{x})) = (\boldsymbol{\alpha}_2 , \tau^i (\boldsymbol{x})) = \dots = (\boldsymbol{\alpha}_k , \tau^i (\boldsymbol{x})) = 0.\]
Similarly, this characterization is equivalent to the intersection \(\bigcap_{j=1}^k V_{\boldsymbol{\alpha}_j}\) being a cyclically covering subspace.

We now employ the Discrete Fourier Transform to derive a necessary and sufficient condition for \(V_{\boldsymbol{\alpha}}\) to be a cyclically covering subspace of \(\mathbb{F}_q^n\). We first establish two lemmas.

\begin{lem}\label{Lem4-1}
Let \(C_j = \{j\}\) denote the singleton cyclotomic coset of $j$ modulo $n$ with respect to $q$. If $-j\in \operatorname{supp}(\widehat{\boldsymbol{\alpha}})$, that is, $j\in -\operatorname{supp}(\hat{\boldsymbol{\alpha}})$, then \(V_{\boldsymbol{\alpha}}\) is not a cyclically covering subspace of \(\mathbb{F}_q^n\).
\end{lem}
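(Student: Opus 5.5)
The plan is to produce an explicit vector $\boldsymbol{x}\in\mathbb{F}_q^n$ that none of the shifts of $V_{\boldsymbol{\alpha}}$ contains, i.e.\ with $(\boldsymbol{\alpha},\tau^i(\boldsymbol{x}))\neq 0$ for every $i\in[n]$. Since $C_j=\{j\}$ is a singleton coset, the polynomial $x-\omega^j$ is an irreducible factor $f_s$ of $x^n-1$ over $\mathbb{F}_q$, so $\omega^j\in\mathbb{F}_q$. Then $W_s=\ker(\tau-\omega^j I)$ is one-dimensional by Lemma \ref{Lem4}. By Lemma \ref{Lem2}, any nonzero $\boldsymbol{x}\in W_s$ has $\operatorname{supp}(\widehat{\boldsymbol{x}})=\{j\}$. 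Concretely, I would take $\boldsymbol{x}$ with $x_t=\omega^{-jt}$, which lies in $\mathbb{F}_q^n$. By the IDFT its transform satisfies $\widehat{\boldsymbol{x}}_k=\sum_t\omega^{(k-j)t}$, which equals $n\neq 0$ if $k=j$ and $0$ otherwise.

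Next I would compute the inner products with Lemma \ref{Lem02}. For each $i$, Lemma \ref{Lem01} gives $\widehat{\tau^i(\boldsymbol{x})}_k=\omega^{ik}\widehat{\boldsymbol{x}}_k$, so
\[(\boldsymbol{\alpha},\tau^i(\boldsymbol{x}))=\frac1n\sum_{k}\widehat{\boldsymbol{\alpha}}_{-k}\,\omega^{ik}\widehat{\boldsymbol{x}}_k=\frac1n\,\widehat{\boldsymbol{\alpha}}_{-j}\,\omega^{ij}\,n=\omega^{ij}\,\widehat{\boldsymbol{\alpha}}_{-j}.\]
By hypothesis $-j\in\operatorname{supp}(\widehat{\boldsymbol{\alpha}})$, so $\widehat{\boldsymbol{\alpha}}_{-j}\neq0$, and $\omega^{ij}\neq0$. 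Hence every inner product is nonzero. Equivalently, $\tau^i(\boldsymbol{x})=\omega^{ij}\boldsymbol{x}$ is a nonzero scalar multiple of $\boldsymbol{x}$, and $\boldsymbol{x}\notin V_{\boldsymbol{\alpha}}$. So $\tau^{-i}(\boldsymbol{x})\notin V_{\boldsymbol{\alpha}}$, i.e.\ $\boldsymbol{x}\notin\tau^i(V_{\boldsymbol{\alpha}})$ for all $i$, and $V_{\boldsymbol{\alpha}}$ is not cyclically covering.

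The only delicate point is the bookkeeping of signs and indices. One has to match $\widehat{\boldsymbol{x}}_k\widehat{\boldsymbol{\alpha}}_{-k}$ in Lemma \ref{Lem02} with the index convention of the hypothesis, which is stated as $-j\in\operatorname{supp}(\widehat{\boldsymbol{\alpha}})$ rather than $j$. One also has to check that the eigenvector $\boldsymbol{x}$ has genuinely $\mathbb{F}_q$-valued coordinates, which is where the singleton-coset hypothesis $jq\equiv j$ is used: it gives $\omega^{jq}=\omega^j$, so $\omega^{-jt}\in\mathbb{F}_q$ for all $t$. Alternatively, Lemma \ref{Lem0} gives the same conclusion, since the vector supported only at $j$ satisfies $y_j^q=y_{qj}=y_j$ when $y_j=n\in\mathbb{F}_q$.
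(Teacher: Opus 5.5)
Your proposal is correct and is essentially the paper's argument. Both take a test vector $\boldsymbol{x}$ whose DFT is supported only on the singleton coset $\{j\}$, and both use Lemmas \ref{Lem01} and \ref{Lem02} to show that every shifted inner product is a nonzero constant times a power of $\omega^{j}$. The differences are cosmetic: you normalize $\widehat{\boldsymbol{x}}_j=n$ (writing $\boldsymbol{x}$ explicitly as the $\tau$-eigenvector $x_t=\omega^{-jt}$) rather than the paper's choice $\widehat{\boldsymbol{x}}_j=(\widehat{\boldsymbol{\alpha}}_{-j})^{-1}$, which spares you the paper's check that $\widehat{\boldsymbol{\alpha}}_{-j}\in\mathbb{F}_q$; and you shift $\boldsymbol{x}$ rather than $\boldsymbol{\alpha}$.
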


\begin{proof}
To prove that \(V_{\boldsymbol{\alpha}}\) is not a cyclically covering subspace of \(\mathbb{F}_q^n\), it suffices to show that there exists an element \(\boldsymbol{x}\in\mathbb{F}_q^n\) such that \(\bigl(\boldsymbol{x},\tau^i(\boldsymbol{\alpha})\bigr)\neq 0\) for all \(0\le i\le n-1\). We have
\[\bigl(\boldsymbol{x},\tau^i(\boldsymbol{\alpha})\bigr)
=\frac{1}{n}\sum_{k=0}^{n-1}\widehat{\boldsymbol{x}}_k \,\widehat{\tau^i(\boldsymbol{\alpha})}_{-k}
=\frac{1}{n}\sum_{k=0}^{n-1}\widehat{\boldsymbol{x}}_k \widehat{\boldsymbol{\alpha}}_{-k}\omega^{-ik}.\]
Since \(-j \in \operatorname{supp}(\widehat{\boldsymbol{\alpha}})\), we have \(\widehat{\boldsymbol{\alpha}}_{-j} \neq 0\). By Lemma \ref{Lem0}, we have $(\widehat{\boldsymbol{\alpha}}_{-j})^q = \widehat{\boldsymbol{\alpha}}_{q(-j)}= \widehat{\boldsymbol{\alpha}}_{-qj}.$
Since $C_j = \{j\}$ is a singleton cyclotomic coset, we have $qj \equiv j \pmod{n}$. This immediately yields $q(-j) \equiv -j \pmod{n}$. Consequently,
\[(\widehat{\boldsymbol{\alpha}}_{-j})^q = \widehat{\boldsymbol{\alpha}}_{-qj} = \widehat{\boldsymbol{\alpha}}_{-j},\]
which implies $\widehat{\boldsymbol{\alpha}}_{-j} \in \mathbb{F}_q$. Set \(\widehat{\boldsymbol{x}}_j = (\widehat{\boldsymbol{\alpha}}_{-j})^{-1}\) and \(\widehat{\boldsymbol{x}}_k = 0\) for all \(k\neq j\). As \(C_j=\{j\}\), we get \(qj\equiv j\pmod{n}\), which implies \(\widehat{\boldsymbol{x}}_{qj}=\widehat{\boldsymbol{x}}_j\). Thus \((\widehat{\boldsymbol{x}}_k)^q = \widehat{\boldsymbol{x}}_{qk}\) holds for all \(k\in [n]\).
It follows that \(\widehat{\boldsymbol{x}}\) uniquely corresponds to some \(\boldsymbol{x}\in\mathbb{F}_q^n\), namely \(\boldsymbol{x} = \operatorname{IDFT}(\widehat{\boldsymbol{x}})\in\mathbb{F}_q^n\). Substituting yields
\[\bigl(\boldsymbol{x},\tau^i(\boldsymbol{\alpha})\bigr)
=\frac{1}{n}\sum_{k=0}^{n-1}\widehat{\boldsymbol{x}}_k \widehat{\boldsymbol{\alpha}}_{-k}\omega^{-ik}
=\frac{1}{n}\widehat{\boldsymbol{x}}_j\widehat{\boldsymbol{\alpha}}_{-j} \omega^{-ij}
=\frac{1}{n}\omega^{-ij}\neq 0.\]
Therefore, \(V_{\boldsymbol{\alpha}}\) is not a cyclically covering subspace of \(\mathbb{F}_q^n\).
\end{proof}

\begin{coro}\label{Lem5}
If \(0 \in \operatorname{supp}(\hat{\boldsymbol{\alpha}})\), then \(V_{\boldsymbol{\alpha}}\) is not a cyclically covering subspace of $\mathbb{F}^n_q$.
\end{coro}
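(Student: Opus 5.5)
This is the special case $j=0$ of Lemma \ref{Lem4-1}, so the plan is to check that $j=0$ meets that lemma's hypotheses and then apply it.

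First, I would verify that $C_0=\{0\}$ is a singleton cyclotomic coset. Indeed $0\cdot q\equiv 0 \pmod n$, so the minimal $m_0$ with $0\cdot q^{m_0}\equiv 0$ is $m_0=1$. Second, since $-0\equiv 0 \pmod n$, the assumption $0\in\operatorname{supp}(\widehat{\boldsymbol{\alpha}})$ is exactly the condition $-j\in\operatorname{supp}(\widehat{\boldsymbol{\alpha}})$ with $j=0$. Lemma \ref{Lem4-1} then gives directly that $V_{\boldsymbol{\alpha}}$ is not cyclically covering.

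For transparency I would also record the explicit witness. Note $\widehat{\boldsymbol{\alpha}}_0=\sum_j\alpha_j\in\mathbb{F}_q^{*}$. Take $\boldsymbol{x}$ to be the constant vector with every entry equal to $(\sum_j\alpha_j)^{-1}$. Then $\bigl(\boldsymbol{x},\tau^i(\boldsymbol{\alpha})\bigr)=1\neq 0$ for every $i$, because each shift $\tau^i(\boldsymbol{\alpha})$ has the same coordinate sum.

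There is no real obstacle here; the only point needing care is the trivial observation that $0$ forms its own cyclotomic coset.
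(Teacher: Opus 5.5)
Your proposal is correct and matches the paper's proof: the paper also notes that $C_0=\{0\}$ is a singleton cyclotomic coset and applies Lemma~\ref{Lem4-1} with $j=0$. Your constant witness is a valid self-contained check. It is $n$ times the vector that lemma produces, and it works without any DFT machinery.
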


\begin{proof}
For any positive integer \(n\) and prime power \(q\), \(C_0 = \{0\}\) is a singleton cyclotomic coset. The desired result follows from Lemma \ref{Lem4-1}.
\end{proof}
By Corollary \ref{Lem5}, we may assume that \(0\notin\operatorname{supp}(\widehat{\boldsymbol{\alpha}})\). Since \(\operatorname{supp}(\widehat{\boldsymbol{\alpha}})\) is a union of cyclotomic cosets, \(-\operatorname{supp}(\widehat{\boldsymbol{\alpha}})\) is also a union of cyclotomic cosets.

\begin{lem}\label{Lem6-1}
Let \( S = -\operatorname{supp}(\hat{\boldsymbol{\alpha}}) = \bigcup_{j=1}^{s} C(i_j) \) with \(0\notin\operatorname{supp}(\widehat{\boldsymbol{\alpha}})\). Then \(V_{\boldsymbol{\alpha}}\) is not a cyclically covering subspace of \(\mathbb{F}_q^n\) if and only if there exists \(\widehat{\boldsymbol{c}}\in\operatorname{DFT}(\mathbb{F}_q^n)\) with \(\operatorname{supp}(\widehat{\boldsymbol{c}}) \subseteq -\operatorname{supp}(\widehat{\boldsymbol{\alpha}})\) such that
\[\operatorname{wt}(\boldsymbol{c})=\operatorname{wt}(\operatorname{IDFT}(\widehat{\boldsymbol{c}}))=n,\]
where $\boldsymbol{c}=\operatorname{IDFT}(\widehat{\boldsymbol{c}})$ and \(\operatorname{wt}(\boldsymbol{c})=|\operatorname{supp}(\boldsymbol{c})|\).
\end{lem}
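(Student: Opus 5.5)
We prove the equivalence by relating the values $(\boldsymbol{x},\tau^i(\boldsymbol{\alpha}))$, as $i$ runs over $[n]$, to the coordinates of a single auxiliary vector $\boldsymbol{c}\in\mathbb{F}_q^n$ attached to $\boldsymbol{x}$. As in the proof of Lemma \ref{Lem4-1}, $V_{\boldsymbol{\alpha}}$ fails to be cyclically covering exactly when some $\boldsymbol{x}\in\mathbb{F}_q^n$ satisfies $(\boldsymbol{x},\tau^i(\boldsymbol{\alpha}))\neq 0$ for every $i\in[n]$. Lemmas \ref{Lem01} and \ref{Lem02} give
\[
\bigl(\boldsymbol{x},\tau^i(\boldsymbol{\alpha})\bigr)=\frac{1}{n}\sum_{k=0}^{n-1}\widehat{\boldsymbol{x}}_k\widehat{\boldsymbol{\alpha}}_{-k}\,\omega^{-ik}.
\]
Define $\widehat{\boldsymbol{c}}$ by $\widehat{\boldsymbol{c}}_k=\widehat{\boldsymbol{x}}_k\widehat{\boldsymbol{\alpha}}_{-k}$. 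The right-hand side is then exactly $\operatorname{IDFT}(\widehat{\boldsymbol{c}})_i=c_i$. Hence the condition says precisely that $\operatorname{wt}(\boldsymbol{c})=n$.

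The key bookkeeping steps are as follows. First, check that $\widehat{\boldsymbol{c}}\in\operatorname{DFT}(\mathbb{F}_q^n)$ using Lemma \ref{Lem0}:
\[
\widehat{\boldsymbol{c}}_k^{\,q}=\widehat{\boldsymbol{x}}_k^{\,q}\,\widehat{\boldsymbol{\alpha}}_{-k}^{\,q}=\widehat{\boldsymbol{x}}_{qk}\,\widehat{\boldsymbol{\alpha}}_{-qk}=\widehat{\boldsymbol{c}}_{qk}.
\]
So $\boldsymbol{c}$ really lies in $\mathbb{F}_q^n$. Second, note that $\widehat{\boldsymbol{c}}_k\neq 0$ forces $\widehat{\boldsymbol{\alpha}}_{-k}\neq0$, i.e.\ $k\in-\operatorname{supp}(\widehat{\boldsymbol{\alpha}})$. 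This gives the support condition and completes the forward direction.

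For the converse, take $\widehat{\boldsymbol{c}}\in\operatorname{DFT}(\mathbb{F}_q^n)$ with $\operatorname{supp}(\widehat{\boldsymbol{c}})\subseteq S$ and $\operatorname{wt}(\boldsymbol{c})=n$. We reverse the construction: set
\[
\widehat{\boldsymbol{x}}_k=\widehat{\boldsymbol{c}}_k\,\widehat{\boldsymbol{\alpha}}_{-k}^{-1}\ \text{ for } k\in S,\qquad \widehat{\boldsymbol{x}}_k=0\ \text{ for } k\notin S.
\]
This is well defined since $\widehat{\boldsymbol{\alpha}}_{-k}\neq 0$ exactly on $S$. Then $\widehat{\boldsymbol{x}}_k\widehat{\boldsymbol{\alpha}}_{-k}=\widehat{\boldsymbol{c}}_k$ for all $k$, because off $S$ both sides vanish (there $\widehat{\boldsymbol{c}}_k=0$).

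The one point needing care is that $\widehat{\boldsymbol{x}}\in\operatorname{DFT}(\mathbb{F}_q^n)$, i.e.\ $\widehat{\boldsymbol{x}}_k^{\,q}=\widehat{\boldsymbol{x}}_{qk}$. The set $S$ is a union of cyclotomic cosets by Lemma \ref{Lem1}, applied to $\operatorname{supp}(\widehat{\boldsymbol{\alpha}})$ and negated. So $k\in S$ if and only if $qk\in S$. On $S$ the identity follows from Lemma \ref{Lem0} applied to both $\widehat{\boldsymbol{c}}$ and $\widehat{\boldsymbol{\alpha}}$ (inverses commute with Frobenius), and off $S$ both sides are $0$. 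Lemma \ref{Lem0} then yields $\boldsymbol{x}=\operatorname{IDFT}(\widehat{\boldsymbol{x}})\in\mathbb{F}_q^n$. The displayed identity gives $(\boldsymbol{x},\tau^i(\boldsymbol{\alpha}))=c_i\neq0$ for all $i$, so $V_{\boldsymbol{\alpha}}$ is not cyclically covering.

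I expect no real obstacle. The only subtle step is the Frobenius-compatibility of $\widehat{\boldsymbol{x}}$ in the converse, which is exactly where the coset structure of $\operatorname{supp}(\widehat{\boldsymbol{\alpha}})$ enters. The hypothesis $0\notin\operatorname{supp}(\widehat{\boldsymbol{\alpha}})$ plays no role in the equivalence itself and merely reflects the reduction made after Corollary \ref{Lem5}.
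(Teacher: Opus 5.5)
Your proof is correct and follows the paper's argument essentially step for step. You use the same auxiliary vector $\widehat{\boldsymbol{c}}_k=\widehat{\boldsymbol{x}}_k\widehat{\boldsymbol{\alpha}}_{-k}$, identified with the inner products $(\boldsymbol{x},\tau^i(\boldsymbol{\alpha}))$ via $\operatorname{IDFT}$, and the same reversed construction of $\widehat{\boldsymbol{x}}$ in the converse. You are slightly more explicit than the paper in checking the Frobenius compatibility of $\widehat{\boldsymbol{x}}$, which the paper leaves as ``easy to verify''. Your remark that the hypothesis $0\notin\operatorname{supp}(\widehat{\boldsymbol{\alpha}})$ plays no role in the equivalence is also accurate.
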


\begin{proof}
If \(V_{\boldsymbol{\alpha}}\) is not a cyclically covering subspace of \(\mathbb{F}_q^n\), then there exists some \(\boldsymbol{x} \in \mathbb{F}_q^n\) such that \(\left(\boldsymbol{x}, \tau^i(\boldsymbol{\alpha})\right) \neq 0\) for all \(0 \le i \le n-1\).  
Define \(\widehat{\boldsymbol{c}}_k = \widehat{\boldsymbol{x}}_k \widehat{\boldsymbol{\alpha}}_{-k}\in\mathbb{F}_{q^m}\). Since \(\boldsymbol{x}, \boldsymbol{\alpha} \in \mathbb{F}_q^n\), we have \((\widehat{\boldsymbol{c}}_k)^q = \widehat{\boldsymbol{c}}_{qk}\) for all \(k\in[n]\), so \(\widehat{\boldsymbol{c}} \in \operatorname{DFT}(\mathbb{F}_q^n)\) and  \(\operatorname{supp}(\boldsymbol{c}) \subseteq -\operatorname{supp}(\widehat{\boldsymbol{\alpha}})\).  
For all \(0\le i\le n-1\), we have
\[0\neq \bigl(\boldsymbol{x},\tau^i(\boldsymbol{\alpha})\bigr)
=\frac{1}{n}\sum_{k=0}^{n-1}\widehat{\boldsymbol{x}}_k \,\widehat{\tau^i(\boldsymbol{\alpha})}_{-k}
=\frac{1}{n}\sum_{k=0}^{n-1}\widehat{\boldsymbol{x}}_k \hat{\boldsymbol{\alpha}}_{-k}\omega^{-ik}= \frac{1}{n} \sum_{k=0}^{n-1} \widehat{\boldsymbol{c}}_k \omega^{-ik}=\operatorname{IDFT}(\widehat{\boldsymbol{c}})_i.\]
This yields that there exists \(\widehat{\boldsymbol{c}}\in\operatorname{DFT}(\mathbb{F}_q^n)\) satisfying \(\operatorname{supp}(\widehat{\boldsymbol{c}}) \subseteq -\operatorname{supp}(\widehat{\boldsymbol{\alpha}})\) such that
\[\operatorname{wt}(\boldsymbol{c})=\operatorname{wt}(\operatorname{IDFT}(\widehat{\boldsymbol{c}}))=n,\]
where $\boldsymbol{c}=\operatorname{IDFT}(\widehat{\boldsymbol{c}})$ and \(\operatorname{wt}(\boldsymbol{c})=|\operatorname{supp}(\boldsymbol{c})|\).

If there exists \(\widehat{\boldsymbol{c}}\in\operatorname{DFT}(\mathbb{F}_q^n)\) with \(\operatorname{supp}(\widehat{\boldsymbol{c}}) \subseteq -\operatorname{supp}(\widehat{\boldsymbol{\alpha}})\) such that \(\operatorname{wt}(\operatorname{IDFT}(\widehat{\boldsymbol{c}}))=n\), we construct \(\widehat{\boldsymbol{x}}\) as follows: set
\[\widehat{\boldsymbol{x}}_k=
\begin{cases}
\widehat{\boldsymbol{c}}_k (\widehat{\boldsymbol{\alpha}}_{-k})^{-1}, & -k\in\operatorname{supp}(\widehat{\boldsymbol{\alpha}}),\\
0, & -k\notin\operatorname{supp}(\widehat{\boldsymbol{\alpha}}).
\end{cases}\]
Then \(\widehat{\boldsymbol{c}}_k = \widehat{\boldsymbol{x}}_k \widehat{\boldsymbol{\alpha}}_{-k}\) holds for every \(k\in[n]\). It is easy to verify that \((\widehat{\boldsymbol{x}}_k)^q=\widehat{\boldsymbol{x}}_{qk}\) for all \(k\in[n]\), which implies \(\widehat{\boldsymbol{x}}\in\operatorname{DFT}(\mathbb{F}_q^n)\). By the Inverse Discrete Fourier Transform, there exists a unique \(\boldsymbol{x}\in\mathbb{F}_q^n\) such that \(\operatorname{DFT}(\boldsymbol{x})=\widehat{\boldsymbol{x}}\). For all \(0\le i\le n-1\), we have
\[\bigl(\boldsymbol{x},\tau^i(\boldsymbol{\alpha})\bigr)
=\frac{1}{n}\sum_{k=0}^{n-1}\widehat{\boldsymbol{x}}_k \,\widehat{\tau^i(\boldsymbol{\alpha})}_{-k}
=\frac{1}{n}\sum_{k=0}^{n-1}\widehat{\boldsymbol{x}}_k \widehat{\boldsymbol{\alpha}}_{-k}\omega^{-ik}=\frac{1}{n}\sum_{k=0}^{n-1}\widehat{\boldsymbol{c}}_k\omega^{-ik}=\operatorname{IDFT}(\widehat{\boldsymbol{c}})_i\neq0.\]
Therefore, there exists \(\boldsymbol{x}\in\mathbb{F}_q^n\) satisfying \(\bigl(\boldsymbol{x},\tau^i(\boldsymbol{\alpha})\bigr)\neq 0\) for every \(0\le i\le n-1\), which implies that \(V_{\boldsymbol{\alpha}}\) is not a cyclically covering subspace of \(\mathbb{F}_q^n\).
\end{proof}

For the convenience of calculation, we restate Lemma \ref{Lem6-1} in the explicit form below.

\begin{lem}\label{Lem6}
Let \( S = -\operatorname{supp}(\widehat{\boldsymbol{\alpha}}) = \bigcup_{j=1}^{s} C(i_j) \) with \(0\notin\operatorname{supp}(\widehat{\boldsymbol{\alpha}})\). Let \( m_j \) denote the cardinality of \( C(i_j) \), and let $r_j$ be the minimal element of $C(i_j)$. Denote \(\mathcal{R}=\{r_1,...,r_s\}\). Take a vector \(\widehat{\boldsymbol{c}}\in\operatorname{DFT}(\mathbb{F}_q^n)\) satisfying \(\operatorname{supp}(\widehat{\boldsymbol{c}}) \subseteq -\operatorname{supp}(\widehat{\boldsymbol{\alpha}})\), whence \((\widehat{\boldsymbol{c}}_k)^q = \widehat{\boldsymbol{c}}_{qk}\). Define
\[P_{\widehat{\boldsymbol{c}}}(t)= \widehat{\boldsymbol{c}}_1 t + \widehat{\boldsymbol{c}}_2 t^2 + \cdots + \widehat{\boldsymbol{c}}_{n-1} t^{n-1} =\sum_{j\in\mathcal{R}}\operatorname{Tr}_{\mathbb{F}_q}^{\mathbb{F}_{q^{m_j}}}(\widehat{\boldsymbol{c}}_jt^j)\in \mathbb{F}_{q^m}[t],\]
where the sum \(\displaystyle\sum_{j\in\mathcal{R}}\) runs over all minimal representatives in \(\bigcup_{t=1}^s C(i_t)\). Then \(V_{\boldsymbol{\alpha}}\) is not a cyclically covering subspace of \(\mathbb{F}_q^n\) if and only if there exists \(\widehat{\boldsymbol{c}}\in\operatorname{DFT}(\mathbb{F}_q^n)\) with \(\operatorname{supp}(\widehat{\boldsymbol{c}}) \subseteq -\operatorname{supp}(\widehat{\boldsymbol{\alpha}})\) such that \(P_{\widehat{\boldsymbol{c}}}(\omega^i) \neq 0\) for all \(0 \le i \le n-1\).
\end{lem}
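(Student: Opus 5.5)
Lemma \ref{Lem6} is a reformulation of Lemma \ref{Lem6-1}, so the plan is to translate the condition $\operatorname{wt}(\operatorname{IDFT}(\widehat{\boldsymbol{c}}))=n$ into the non-vanishing of $P_{\widehat{\boldsymbol{c}}}$ at all $n$-th roots of unity. First observe that for $\widehat{\boldsymbol{c}}\in\operatorname{DFT}(\mathbb{F}_q^n)$ with $\operatorname{supp}(\widehat{\boldsymbol{c}})\subseteq S$ we have $\widehat{\boldsymbol{c}}_0=0$. This holds because $0\notin\operatorname{supp}(\widehat{\boldsymbol{\alpha}})$ forces $0\notin S$. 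By the inverse transform,
\[
\boldsymbol{c}_i=\operatorname{IDFT}(\widehat{\boldsymbol{c}})_i=\frac1n\sum_{k=1}^{n-1}\widehat{\boldsymbol{c}}_k\,\omega^{-ik}=\frac1n P_{\widehat{\boldsymbol{c}}}(\omega^{-i}).
\]
As $i$ runs over $[n]$, $\omega^{-i}$ runs over all $\omega^{i}$, $i\in[n]$. Since $\frac1n\neq0$, we get $\operatorname{wt}(\boldsymbol{c})=n$ if and only if $P_{\widehat{\boldsymbol{c}}}(\omega^i)\neq 0$ for all $0\le i\le n-1$. Combined with Lemma \ref{Lem6-1}, this gives the stated equivalence.

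It remains to justify the trace expression for $P_{\widehat{\boldsymbol{c}}}$. Split the sum $\sum_{k=1}^{n-1}\widehat{\boldsymbol{c}}_k t^k$ according to the cyclotomic cosets. Terms with $k\notin S$ vanish. The set $S$ is the disjoint union of the cosets $C(i_j)=\{r_j,r_jq,\dots,r_jq^{m_j-1}\}$ modulo $n$. By Lemma \ref{Lem0}, $\widehat{\boldsymbol{c}}_{r_jq^l}=\widehat{\boldsymbol{c}}_{r_j}^{q^l}$. Hence the contribution of $C(i_j)$ is
\[
\sum_{l=0}^{m_j-1}\widehat{\boldsymbol{c}}_{r_j}^{q^l}\,t^{r_jq^l},
\]
where exponents are read modulo $n$ when evaluating at $n$-th roots of unity. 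Two consequences follow from $\widehat{\boldsymbol{c}}_{r_j}^{q^{m_j}}=\widehat{\boldsymbol{c}}_{r_jq^{m_j}}=\widehat{\boldsymbol{c}}_{r_j}$:

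\begin{itemize}
\item $\widehat{\boldsymbol{c}}_{r_j}\in\mathbb{F}_{q^{m_j}}$;
\item at $t=\omega^i$, the element $\widehat{\boldsymbol{c}}_{r_j}\omega^{ir_j}$ lies in $\mathbb{F}_{q^{m_j}}$, because $\omega^{r_jq^{m_j}}=\omega^{r_j}$.
\end{itemize}

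The block therefore equals $\operatorname{Tr}_{\mathbb{F}_q}^{\mathbb{F}_{q^{m_j}}}(\widehat{\boldsymbol{c}}_{r_j}\omega^{ir_j})$. Summing over $j$ yields the displayed formula, with the sum indexed by $\mathcal{R}$.

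The only delicate point is this trace identity. It should be read as an identity of functions on $\{\omega^i\}$, or as a formal identity modulo $t^n-1$, rather than literally in $\mathbb{F}_{q^m}[t]$, since the exponents $r_jq^l$ must be reduced mod $n$. I will state that interpretation explicitly. The equivalence itself uses only the evaluation at $\omega^i$, which is unaffected.
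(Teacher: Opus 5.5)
Your proposal is correct and follows essentially the paper's route: you group $\sum_k \widehat{\boldsymbol{c}}_k t^k$ by cyclotomic cosets using $(\widehat{\boldsymbol{c}}_k)^q=\widehat{\boldsymbol{c}}_{qk}$ to obtain the trace form, and you derive the equivalence from Lemma \ref{Lem6-1} via $\boldsymbol{c}_i=\frac1n P_{\widehat{\boldsymbol{c}}}(\omega^{-i})$, a step the paper leaves implicit. Your remark that the trace identity holds only modulo $t^n-1$ (equivalently, after evaluation at the $n$-th roots of unity) is a genuine improvement, since the paper's chain of equalities, read literally in $\mathbb{F}_{q^m}[t]$, glosses over the reduction of the exponents $jq^l$ modulo $n$.
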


\begin{proof}
Let \(\widehat{\boldsymbol{c}}\in\operatorname{DFT}(\mathbb{F}_q^n)\) satisfy \(\operatorname{supp}(\widehat{\boldsymbol{c}}) \subseteq -\operatorname{supp}(\widehat{\boldsymbol{\alpha}})\). Define the polynomial \(P(t) = \sum\limits_{k=0}^{n-1} \widehat{\boldsymbol{c}}_k t^k\). As \(0\notin\operatorname{supp}(\widehat{\boldsymbol{\alpha}})\), it follows that \(\widehat{\boldsymbol{c}}_0 = 0\). With the identity \((\widehat{\boldsymbol{c}}_k)^q = \widehat{\boldsymbol{c}}_{qk}\), \(P(t)\) can be rewritten as
\[\begin{aligned}
P(t) &= \widehat{\boldsymbol{c}}_1 t + \widehat{\boldsymbol{c}}_2 t^2 + \cdots + \widehat{\boldsymbol{c}}_{n-1} t^{n-1} \\
&=\sum_{j\in\mathcal{R}} \left( \widehat{\boldsymbol{c}}_j t^j + \widehat{\boldsymbol{c}}_{jq} t^{jq} + \widehat{\boldsymbol{c}}_{jq^2} t^{jq^2} + \cdots + \widehat{\boldsymbol{c}}_{jq^{m_j-1}} t^{jq^{m_j-1}} \right)\\
&=\sum_{j\in\mathcal{R}} \left( \widehat{\boldsymbol{c}}_j t^j + (\widehat{\boldsymbol{c}}_j t^j)^q + (\widehat{\boldsymbol{c}}_j t^j)^{q^2} + \cdots + (\widehat{\boldsymbol{c}}_j t^j)^{q^{m_j-1}} \right)\\
&=\sum_{j\in\mathcal{R}}\operatorname{Tr}_{\mathbb{F}_q}^{\mathbb{F}_{q^{m_j}}}(\widehat{\boldsymbol{c}}_jt^j)=P_{\widehat{\boldsymbol{c}}}(t),
\end{aligned}\]
where the sum \(\displaystyle\sum_{j\in\mathcal{R}}\) runs over all minimal representatives in \(\bigcup_{t=1}^s C(i_t)\).
By Lemma \ref{Lem6-1}, \(\operatorname{wt}(\boldsymbol{c})=\operatorname{wt}(\operatorname{IDFT}(\widehat{\boldsymbol{c}}))=n\) is equivalent to \(P_{\widehat{\boldsymbol{c}}}(\omega^i) \neq 0\) for all \(0 \le i \le n-1\).
\end{proof}

A $k$-dimensional linear subspace \(\mathcal{C}\) of \(\mathbb{F}_q^n\) is called an \([n,k]_q\) linear code. A codeword \(\boldsymbol{c}\in\mathcal{C}\) is said to be a full-weight codeword if \(\operatorname{wt}(\boldsymbol{c})=n\). By Lemma \ref{Lem2}, we reformulate Lemma \ref{Lem6-1} into a problem of determining whether there exists a full-weight codeword in the cyclic code.

\begin{coro}\label{coro2-2}
Let \(S = -\operatorname{supp}(\widehat{\boldsymbol{\alpha}}) = \bigcup_{j=1}^{s} C(i_j)\), and assume \(0\notin\operatorname{supp}(\widehat{\boldsymbol{\alpha}})\). The union of cyclotomic cosets \(\bigcup_{j=1}^{s} C(i_j)\) corresponds to the direct sum of subspaces \(W_{i_1}\oplus \dots \oplus W_{i_s}\) of \(\mathbb{F}_q^n\). Then \(V_{\boldsymbol{\alpha}}\) is not a cyclically covering subspace of \(\mathbb{F}_q^n\) if and only if there exists a full-weight codeword in \(W_{i_1}\oplus \dots \oplus W_{i_s}\).
\end{coro}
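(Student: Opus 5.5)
The plan is to deduce the corollary directly from Lemma \ref{Lem6-1}. By that lemma, \(V_{\boldsymbol{\alpha}}\) fails to be cyclically covering if and only if there is some \(\widehat{\boldsymbol{c}}\in\operatorname{DFT}(\mathbb{F}_q^n)\) with \(\operatorname{supp}(\widehat{\boldsymbol{c}})\subseteq S\) and \(\operatorname{wt}(\operatorname{IDFT}(\widehat{\boldsymbol{c}}))=n\). It therefore suffices to prove the set identity
\[\{\boldsymbol{c}\in\mathbb{F}_q^n : \operatorname{supp}(\widehat{\boldsymbol{c}})\subseteq S\}=W_{i_1}\oplus\cdots\oplus W_{i_s}.\]
Once this holds, the condition in Lemma \ref{Lem6-1} says precisely that the subspace on the right contains a full-weight vector. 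That subspace is \(\tau\)-invariant, being the kernel of \(\prod_t f_{i_t}(\tau)\), so it is a cyclic code.

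For the inclusion \(\supseteq\), take \(\boldsymbol{c}=\boldsymbol{\alpha}_1+\cdots+\boldsymbol{\alpha}_s\) with \(\boldsymbol{\alpha}_t\in W_{i_t}\). Discard the zero summands and apply Lemma \ref{Lem2}. This gives \(\operatorname{supp}(\widehat{\boldsymbol{c}})\) as the union of the cosets \(C(i_t)\) for the nonzero summands, and that union is contained in \(S\).

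For the inclusion \(\subseteq\), take \(\boldsymbol{c}\in\mathbb{F}_q^n\) with \(\operatorname{supp}(\widehat{\boldsymbol{c}})\subseteq S\). By Lemma \ref{Lem03}, write \(\boldsymbol{c}=\sum_{i=1}^r\boldsymbol{\beta}_i\) with \(\boldsymbol{\beta}_i\in W_i\). By Lemma \ref{Lem2}, \(\operatorname{supp}(\widehat{\boldsymbol{c}})\) is the union of \(C(i)\) over those \(i\) with \(\boldsymbol{\beta}_i\neq\boldsymbol 0\). Since distinct irreducible factors correspond to disjoint cyclotomic cosets, \(C(i)\subseteq S\) forces \(i\in\{i_1,\dots,i_s\}\). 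Hence \(\boldsymbol{\beta}_i=\boldsymbol 0\) for every other \(i\), and \(\boldsymbol{c}\in W_{i_1}\oplus\cdots\oplus W_{i_s}\).

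Combining the identity with Lemma \ref{Lem6-1} gives the corollary. The only point needing care is matching the indexing convention: the cosets \(C(i_j)\) making up \(S=-\operatorname{supp}(\widehat{\boldsymbol{\alpha}})\) must be exactly the cosets attached to the factors \(f_{i_j}\). This is the correspondence \(f_s\leftrightarrow C(s)\) fixed before Lemma \ref{Lem03}, so this step is bookkeeping rather than a real obstacle.
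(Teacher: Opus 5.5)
Your proposal is correct and takes the same route as the paper, which simply says the corollary follows from Lemma~\ref{Lem2} and Lemma~\ref{Lem6-1}. You make explicit the step the paper leaves implicit: the identification of $\{\boldsymbol{c}\in\mathbb{F}_q^n:\operatorname{supp}(\widehat{\boldsymbol{c}})\subseteq S\}$ with $W_{i_1}\oplus\cdots\oplus W_{i_s}$, including the handling of zero summands and the disjointness of cyclotomic cosets.
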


\begin{proof}
It follows directly from Lemma \ref{Lem2} and Lemma \ref{Lem6-1}.
\end{proof}

We now present the necessary and sufficient conditions for \(h_q(n) = 0\) when \(\gcd(q,n)=1\).
\begin{thm}\label{Thm1}
Let \(\boldsymbol{\alpha} \in \mathbb{F}_q^n\). Then \(V_{\boldsymbol{\alpha}} = \{\boldsymbol{x} \in \mathbb{F}_q^n : (\boldsymbol{x}, \boldsymbol{\alpha}) = 0\}\) is a cyclically covering subspace of \(\mathbb{F}_{q}^n\) if and only if both of the following conditions hold:

(1) \(0 \notin \operatorname{supp}(\widehat{\boldsymbol{\alpha}})\);

(2) For every vector \(\widehat{\boldsymbol{c}} \in \operatorname{DFT}(\mathbb{F}_q^n)\) with \(\operatorname{supp}(\widehat{\boldsymbol{c}}) \subseteq -\operatorname{supp}(\widehat{\boldsymbol{\alpha}})\), it holds that \(\operatorname{wt}(\boldsymbol{c}) = \operatorname{wt}(\operatorname{IDFT}(\widehat{\boldsymbol{c}})) < n\). This is equivalent to the existence of some \(i \in [n]\) with \(P_{\widehat{\boldsymbol{c}}}(\omega^i) = 0\), and \(P_{\widehat{\boldsymbol{c}}}(t)\) is defined in Lemma \ref{Lem6}.
\end{thm}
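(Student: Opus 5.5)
The plan is to assemble Theorem \ref{Thm1} directly from Corollary \ref{Lem5}, Lemma \ref{Lem6-1} and Lemma \ref{Lem6}. Both directions are obtained by taking contrapositives.

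\emph{Necessity.} Suppose $V_{\boldsymbol{\alpha}}$ is cyclically covering. If $0\in\operatorname{supp}(\widehat{\boldsymbol{\alpha}})$, then Corollary \ref{Lem5} says $V_{\boldsymbol{\alpha}}$ is not cyclically covering, which is a contradiction. So condition (1) holds. With (1) in hand, the hypothesis of Lemma \ref{Lem6-1} is met. That lemma says non-covering is equivalent to the existence of some $\widehat{\boldsymbol{c}}\in\operatorname{DFT}(\mathbb{F}_q^n)$ with $\operatorname{supp}(\widehat{\boldsymbol{c}})\subseteq-\operatorname{supp}(\widehat{\boldsymbol{\alpha}})$ and $\operatorname{wt}(\operatorname{IDFT}(\widehat{\boldsymbol{c}}))=n$. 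Since $V_{\boldsymbol{\alpha}}$ is covering, no such $\widehat{\boldsymbol{c}}$ exists. Hence every admissible $\widehat{\boldsymbol{c}}$ has $\operatorname{wt}(\boldsymbol{c})<n$, which is condition (2).

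\emph{Sufficiency.} Assume (1) and (2) hold. By (1), Lemma \ref{Lem6-1} applies. Condition (2) says precisely that there is no admissible full-weight $\boldsymbol{c}$. So the "not covering" side of the equivalence in Lemma \ref{Lem6-1} fails, and therefore $V_{\boldsymbol{\alpha}}$ is cyclically covering.

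\emph{The reformulation via $P_{\widehat{\boldsymbol{c}}}$.} It remains to justify that "$\operatorname{wt}(\boldsymbol{c})<n$" is the same as "there is some $i\in[n]$ with $P_{\widehat{\boldsymbol{c}}}(\omega^i)=0$". The starting identity is
\[
\boldsymbol{c}_i=\operatorname{IDFT}(\widehat{\boldsymbol{c}})_i=\tfrac1n\sum_k\widehat{\boldsymbol{c}}_k\omega^{-ik}=\tfrac1n P_{\widehat{\boldsymbol{c}}}(\omega^{-i}),
\]
where $\widehat{\boldsymbol{c}}_0=0$ because of (1). As $i$ runs over $[n]$, so does $-i \bmod n$. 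Therefore $\boldsymbol{c}$ has a zero coordinate if and only if $P_{\widehat{\boldsymbol{c}}}(\omega^i)=0$ for some $i$. The trace form of $P_{\widehat{\boldsymbol{c}}}$ is exactly the rewriting already carried out in Lemma \ref{Lem6}.

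The only point needing care is the index sign convention $\omega^{-i}$ versus $\omega^{i}$, and it is handled by the bijection $i\mapsto -i$ on $[n]$. I do not expect any genuine obstacle: the theorem is a repackaging of Corollary \ref{Lem5} and Lemma \ref{Lem6-1}.
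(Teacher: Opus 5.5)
Your proposal is correct and takes the same route as the paper, which simply combines Corollary \ref{Lem5} (to force condition (1)) with Lemma \ref{Lem6-1}/Lemma \ref{Lem6} (for the full-weight criterion and the $P_{\widehat{\boldsymbol{c}}}$ reformulation). Your explicit check that $\boldsymbol{c}_i=\frac1n P_{\widehat{\boldsymbol{c}}}(\omega^{-i})$, using $\widehat{\boldsymbol{c}}_0=0$ and the reindexing $i\mapsto -i$, fills in a sign detail the paper leaves implicit.
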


\begin{proof} 
This follows from Corollary \ref{Lem5} and Lemma \ref{Lem6}.
\end{proof}

Since every cyclic code over \(\mathbb{F}_q^n\) corresponds to a subspace of the form \(W_{i_1}\oplus \dots \oplus W_{i_s}\), we obtain the following result.

\begin{lem}\label{lem:vanishing-full-weight}
Assume that \(\gcd(n,q)=1\). The following conditions are equivalent:

\(\mathrm{(i)}\) \(h_q(n)=0\);

\(\mathrm{(ii)}\) every cyclic code whose DFT support is a nonempty union of \(q\)-cyclotomic cosets modulo \(n\) that does not contain \(C_0\) contains a full-weight codeword;

\(\mathrm{(iii)}\) every minimal cyclic code corresponding to a nonzero \(q\)-cyclotomic coset modulo \(n\) contains a full-weight codeword.
\end{lem}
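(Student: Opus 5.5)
The plan is to prove (i)$\Rightarrow$(iii)$\Rightarrow$(ii)$\Rightarrow$(i), using Theorem \ref{Thm1} and Corollary \ref{coro2-2} throughout. First recall that $h_q(n)=0$ means exactly that no hyperplane $V_{\boldsymbol{\alpha}}$ ($\boldsymbol{\alpha}\neq\boldsymbol{0}$) is cyclically covering. Indeed, any subspace of positive codimension lies in some hyperplane, and a subspace containing a cyclically covering subspace is itself cyclically covering.

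\textbf{(ii)$\Rightarrow$(i).} Take any $\boldsymbol{\alpha}\neq \boldsymbol{0}$. If $0\in\operatorname{supp}(\widehat{\boldsymbol{\alpha}})$, then $V_{\boldsymbol{\alpha}}$ is not covering by Corollary \ref{Lem5}. Otherwise $S=-\operatorname{supp}(\widehat{\boldsymbol{\alpha}})$ is a nonempty union of cosets not containing $C_0$; it is nonempty because the DFT is injective. The corresponding cyclic code $W_{i_1}\oplus\cdots\oplus W_{i_s}$ then contains a full-weight codeword by (ii), so $V_{\boldsymbol{\alpha}}$ is not covering by Corollary \ref{coro2-2}.

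\textbf{(iii)$\Rightarrow$(ii).} A cyclic code $W_{i_1}\oplus\cdots\oplus W_{i_s}$ of the type in (ii) contains the minimal code $W_{i_1}$, whose coset is nonzero. A full-weight codeword of $W_{i_1}$ is therefore a full-weight codeword of the larger code.

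\textbf{(i)$\Rightarrow$(iii).} Fix a nonzero coset $C$ and its minimal code $W$. We need some $\boldsymbol{\alpha}$ with $-\operatorname{supp}(\widehat{\boldsymbol{\alpha}})=C$ exactly, that is, $\boldsymbol{\alpha}\in W'$, where $W'$ is the minimal code attached to the coset $-C$. This is legitimate because $-C$ is again a cyclotomic coset, and it is nonzero. Take any nonzero $\boldsymbol{\alpha}\in W'$. Lemma \ref{Lem4} (or Lemma \ref{Lem2}) gives $\operatorname{supp}(\widehat{\boldsymbol{\alpha}})=-C$, so $0\notin\operatorname{supp}(\widehat{\boldsymbol{\alpha}})$. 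By (i), $V_{\boldsymbol{\alpha}}$ is not covering, and Corollary \ref{coro2-2} then yields a full-weight codeword in the code corresponding to $S=C$, namely $W$.

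The main point needing care is the identification in Corollary \ref{coro2-2} between "cyclic code with DFT support $S$" and $\bigoplus W_{i_j}$. This follows from Lemma \ref{Lem2}: every vector of $\bigoplus W_{i_j}$ has DFT support inside $S$. Conversely, any vector with DFT support in $S$ lies in that sum, because its components in the other $W_i$ must vanish by the disjointness of supports. Everything else is routine bookkeeping.
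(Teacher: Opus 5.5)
Your proof is correct and uses the same ingredients as the paper: Corollary \ref{Lem5}, Corollary \ref{coro2-2}, Theorem \ref{Thm1}, and the fact that a direct sum of minimal codes contains each summand. The only difference is organizational. You close the cycle with a direct (i)$\Rightarrow$(iii) step instead of the paper's (i)$\Leftrightarrow$(ii) plus (ii)$\Rightarrow$(iii), and in doing so you make explicit two points the paper leaves implicit: the reduction from arbitrary positive codimension to hyperplanes, and the existence of $\boldsymbol{\alpha}$ with $-\operatorname{supp}(\widehat{\boldsymbol{\alpha}})$ equal to a prescribed coset $C$, which you obtain from the minimal code of $-C$.
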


\begin{proof}
By Corollary \ref{coro2-2} and Theorem \ref{Thm1}, conditions
\(\mathrm{(i)}\) and \(\mathrm{(ii)}\) are equivalent. The requirement that
the union be nonempty simply excludes the zero code.

Clearly, \(\mathrm{(ii)}\) implies \(\mathrm{(iii)}\). Conversely, let
\(\mathcal C\) be a cyclic code as in \(\mathrm{(ii)}\). By Lemma
\ref{Lem03}, \(\mathcal C\) is a direct sum of one or more minimal cyclic
codes. Under \(\mathrm{(iii)}\), any one of these minimal subcodes contains
a full-weight codeword, which also belongs to \(\mathcal C\). Hence
\(\mathrm{(iii)}\) implies \(\mathrm{(ii)}\).
\end{proof}

\begin{coro}\label{cor:prime-multiplier-equivalence}
Let \(n\) be prime and \(\gcd(n,q)=1\). Any two minimal cyclic codes associated with nonzero \(q\)-cyclotomic cosets modulo \(n\) are multiplier-equivalent, and hence have the same weight distribution.
\end{coro}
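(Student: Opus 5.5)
For $n$ prime with $\gcd(n,q)=1$, the nonzero residues modulo $n$ form the cyclic group $(\mathbb{Z}/n\mathbb{Z})^\times$. The $q$-cyclotomic cosets of nonzero elements are exactly the cosets of the subgroup $\langle q\rangle$, so all of them have size $m=\operatorname{ord}_n(q)$. Given two nonzero cosets $C_a$ and $C_b$, the plan is to take $u\equiv b a^{-1}\pmod n$, which is a unit because $n$ is prime and $a\not\equiv 0$. Multiplication by $u$ commutes with multiplication by $q$, so it maps $C_a$ bijectively onto $uC_a=C_{ua}=C_b$.

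Next I will define the multiplier $\mu_v:\mathbb{F}_q^n\to\mathbb{F}_q^n$ by $\mu_v(\boldsymbol{x})_j=x_{vj}$, with indices mod $n$, for a suitable unit $v$. Since $j\mapsto vj$ is a permutation of $[n]$, the map $\mu_v$ is a coordinate permutation. It is therefore a linear isometry for the Hamming weight, and in particular it preserves $\operatorname{wt}$ exactly.

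The DFT computation is
\[\widehat{\mu_v(\boldsymbol{x})}_k=\sum_{j}x_{vj}\omega^{jk}=\sum_{l}x_l\omega^{l v^{-1}k}=\widehat{\boldsymbol{x}}_{v^{-1}k},\]
using the substitution $l=vj$. Hence $\operatorname{supp}(\widehat{\mu_v(\boldsymbol{x})})=v\cdot\operatorname{supp}(\widehat{\boldsymbol{x}})$. Taking $v=u$, every nonzero $\boldsymbol{x}$ in the minimal cyclic code $W$ attached to $C_a$ has DFT support $C_a$ by Lemma \ref{Lem2}, so $\mu_u(\boldsymbol{x})$ has DFT support $C_b$. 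A vector whose DFT support lies in $C_b$ is annihilated by the irreducible factor $f$ corresponding to $C_b$. To see this, note $\widehat{f(\tau)\boldsymbol{y}}_k=f(\omega^k)\widehat{\boldsymbol{y}}_k$, which vanishes for $k\notin C_b$ because $\widehat{\boldsymbol{y}}_k=0$ there, and for $k\in C_b$ because $f(\omega^k)=0$. Injectivity of the DFT then gives $f(\tau)\boldsymbol{y}=\boldsymbol{0}$. So $\mu_u(\boldsymbol{x})$ lies in the minimal code $W'$ for $C_b$, and $\mu_u(W)\subseteq W'$.

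Both codes have dimension $m=|C_a|=|C_b|$, and $\mu_u$ is injective, so $\mu_u(W)=W'$. It also commutes with the weight function, so the weight distributions of $W$ and $W'$ agree.

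The only delicate point is bookkeeping of $u$ versus $u^{-1}$ in the DFT index shift. This is harmless: both are units, so one can choose whichever of the two sends $C_a$ to $C_b$. Primality of $n$ is used only to guarantee that every nonzero residue, in particular every coset representative, is invertible, so that any two nonzero cosets are related by a unit multiplier.
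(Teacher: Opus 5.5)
Your proposal is correct and follows essentially the same route as the paper: you pick the unit $u\equiv ba^{-1}\pmod n$, apply the multiplier $\mu_u$, and compute $\widehat{\mu_u(\boldsymbol{x})}_k=\widehat{\boldsymbol{x}}_{u^{-1}k}$, so DFT supports are multiplied by $u$ and $uC_a=C_b$; the conclusion then follows because coordinate permutations preserve Hamming weight. Two steps the paper only asserts (``hence $\mu_u$ maps \dots onto \dots'') are spelled out in your version: DFT support contained in $C_b$ forces membership in $\ker f(\tau)$, and a dimension count upgrades $\mu_u(W)\subseteq W'$ to equality.
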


\begin{proof}
Let \(C_i\) and \(C_j\) be nonzero \(q\)-cyclotomic cosets modulo \(n\).
Since \(n\) is prime, \(i\) and \(j\) are invertible modulo \(n\). Taking
\[
u\equiv ji^{-1}\pmod n,
\]
we have \(uC_i=C_j\).

Define the multiplier permutation
\[
(\mu_u(\boldsymbol{x}))_r=x_{ur},
\qquad r\in\mathbb Z/n\mathbb Z.
\]
For every \(k\in\mathbb Z/n\mathbb Z\), a change of variables
\(s=ur\) gives
\[
\widehat{\mu_u(\boldsymbol{x})}_k
=\sum_{r=0}^{n-1}x_{ur}\omega^{rk}
=\sum_{s=0}^{n-1}x_s\omega^{s u^{-1}k}
=\widehat{\boldsymbol{x}}_{u^{-1}k}.
\]
Therefore,
\[
\operatorname{supp}\bigl(\widehat{\mu_u(\boldsymbol{x})}\bigr)
=u\,\operatorname{supp}(\widehat{\boldsymbol{x}}).
\]
Hence \(\mu_u\) maps the minimal cyclic code associated with \(C_i\)
onto the one associated with \(C_j\). Since \(\mu_u\) is a coordinate
permutation, it preserves Hamming weight, and the two codes have the
same weight distribution.
\end{proof}

From Theorem \ref{Thm1}, we derive the necessary and sufficient conditions for a $k$ dimensional subspace to be cyclically covering.
\begin{thm}\label{Thm2}
Let \(U=\bigcap_{j=1}^k V_{\boldsymbol{\alpha}_j}\) be a subspace of codimension $k$, where \(\boldsymbol{\alpha}_1,\boldsymbol{\alpha}_2,\dots,\boldsymbol{\alpha}_k\) are linearly independent over \(\mathbb{F}_q\). Then $U$ is a cyclically covering subspace of \(\mathbb{F}_{q}^n\) if and only if \(0 \notin \operatorname{supp}(\widehat{\boldsymbol{\alpha}}_j)\) for all $1\leq j\leq k$ and for every $k$-tuple \((\widehat{\boldsymbol{c}}_1,\dots,\widehat{\boldsymbol{c}}_k)\) satisfying both of the following conditions, there exists a common index \(i\in[n]\) such that  \(P_{\widehat{\boldsymbol{c}}_j}(\omega^i)=0\)~ holds simultaneously for all \(j=1,\dots,k\):

(1) For each \(j=1,\dots,k\), \(\widehat{\boldsymbol{c}}_j\in\operatorname{DFT}(\mathbb{F}_q^n)\) and \(\operatorname{supp}(\widehat{\boldsymbol{c}}_j)\subseteq -\operatorname{supp}(\widehat{\boldsymbol{\alpha}}_j)\);

(2) There exists a common \(\boldsymbol{x}\in\mathbb F_q^n\) such that for all \(j=1,\dots,k\) and all \(s\in[n]\), \((\widehat{\boldsymbol{c}}_j)_s=\widehat{\boldsymbol{x}}_s \cdot (\widehat{\boldsymbol{\alpha}}_j)_{-s}\).
\end{thm}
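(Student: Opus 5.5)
We will prove Theorem \ref{Thm2} by running the argument of Lemma \ref{Lem6-1} simultaneously for all $k$ functionals $\boldsymbol{\alpha}_1,\dots,\boldsymbol{\alpha}_k$, with one common test vector $\boldsymbol{x}$. The starting point is the reformulation of covering. Since $\boldsymbol{y}\in\tau^i(U)$ iff $\tau^{-i}(\boldsymbol{y})\in U$, and since $(\tau^{-i}\boldsymbol{y},\boldsymbol{\alpha}_j)=(\boldsymbol{y},\tau^{i}\boldsymbol{\alpha}_j)$ ($\tau$ is a permutation matrix), $U$ is cyclically covering iff for every $\boldsymbol{x}\in\mathbb{F}_q^n$ there is $i\in[n]$ with $(\boldsymbol{x},\tau^i(\boldsymbol{\alpha}_j))=0$ for all $j=1,\dots,k$. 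By Lemma \ref{Lem02} and Lemma \ref{Lem01},
\[
\bigl(\boldsymbol{x},\tau^i(\boldsymbol{\alpha}_j)\bigr)=\frac1n\sum_{s=0}^{n-1}\widehat{\boldsymbol{x}}_s(\widehat{\boldsymbol{\alpha}}_j)_{-s}\,\omega^{-is}=\operatorname{IDFT}(\widehat{\boldsymbol{c}}_j)_i ,
\]
where we set $(\widehat{\boldsymbol{c}}_j)_s=\widehat{\boldsymbol{x}}_s(\widehat{\boldsymbol{\alpha}}_j)_{-s}$. This is exactly condition (2). Condition (1) then holds automatically: $\widehat{\boldsymbol{c}}_j$ satisfies the Frobenius rule of Lemma \ref{Lem0}, and its support lies in $-\operatorname{supp}(\widehat{\boldsymbol{\alpha}}_j)$.

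Next we relate $\operatorname{IDFT}(\widehat{\boldsymbol{c}}_j)_i$ to $P_{\widehat{\boldsymbol{c}}_j}(\omega^{-i})$, as in the proof of Lemma \ref{Lem6}, where $P$ is the full polynomial $\sum_s(\widehat{\boldsymbol{c}}_j)_st^s$. Under the hypothesis $0\notin\operatorname{supp}(\widehat{\boldsymbol{\alpha}}_j)$ we have $(\widehat{\boldsymbol{c}}_j)_0=0$, so this agrees with $P_{\widehat{\boldsymbol{c}}_j}$. As $i$ ranges over $[n]$, so does $-i$. Hence, for tuples arising from some $\boldsymbol{x}$, the existence of a common index $i$ with all $(\boldsymbol{x},\tau^i\boldsymbol{\alpha}_j)=0$ is equivalent to the existence of a common $i'$ with $P_{\widehat{\boldsymbol{c}}_j}(\omega^{i'})=0$ for all $j$. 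The tuples satisfying (1) and (2) are precisely those of the form $\widehat{\boldsymbol{c}}_j=\widehat{\boldsymbol{x}}\cdot(\widehat{\boldsymbol{\alpha}}_j)_{-\,\cdot}$ for some $\boldsymbol{x}$. Therefore, granted $0\notin\operatorname{supp}(\widehat{\boldsymbol{\alpha}}_j)$ for all $j$, the polynomial condition in the theorem is equivalent to $U$ being cyclically covering. This gives the sufficiency direction and half of necessity.

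The remaining step, which I expect to be the main obstacle, is to show that covering forces $0\notin\operatorname{supp}(\widehat{\boldsymbol{\alpha}}_j)$ for every $j$. For a single functional this is Corollary \ref{Lem5}, but with $k$ functionals we must avoid all $k$ conditions simultaneously, so the argument needs adapting. The plan is to imitate Lemma \ref{Lem4-1} with $j=0$: choose $\widehat{\boldsymbol{x}}$ supported only at index $0$, with $\widehat{\boldsymbol{x}}_0=a\in\mathbb{F}_q^{*}$, i.e.\ $\boldsymbol{x}$ a nonzero constant vector. Then
\[
(\boldsymbol{x},\tau^i\boldsymbol{\alpha}_j)=\tfrac1n\, a\,(\widehat{\boldsymbol{\alpha}}_j)_0,
\]
which is independent of $i$, and is nonzero precisely for those $j$ with $0\in\operatorname{supp}(\widehat{\boldsymbol{\alpha}}_j)$. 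So if even one $j$ has $(\widehat{\boldsymbol{\alpha}}_j)_0\neq0$, then for every $i$ that single equation fails, and the constant vector is not covered. Hence covering implies $0\notin\operatorname{supp}(\widehat{\boldsymbol{\alpha}}_j)$ for every $j$. Equivalently, $U\supseteq\tau^i(U)\cap\langle\mathbf 1\rangle$ covering forces $\mathbf 1\in U$.

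Two details should be checked when writing this out. First, the reindexing $i\mapsto -i$ must be applied consistently across all $j$; it is, since the common index enters every $j$ identically. Second, linear independence of the $\boldsymbol{\alpha}_j$ is used only to guarantee that $U$ has codimension exactly $k$, not in the covering argument itself.
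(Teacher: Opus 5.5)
Your proposal is correct and follows the paper's own route. The paper's proof of Theorem \ref{Thm2} just says to run the DFT/Parseval computation behind Theorem \ref{Thm1} for all $k$ functionals with one common test vector $\boldsymbol{x}$. You spell that out, including the reindexing $i\mapsto -i$ and the observation that $(\widehat{\boldsymbol{c}}_j)_0=0$ makes the full polynomial coincide with $P_{\widehat{\boldsymbol{c}}_j}$.

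The step you flag as the main obstacle is actually immediate. Since $U\subseteq V_{\boldsymbol{\alpha}_j}$, a covering $U$ makes each $V_{\boldsymbol{\alpha}_j}$ covering, so Corollary \ref{Lem5} applies to each $j$ directly; your constant-vector argument repeats that corollary's proof.
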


\begin{proof}
Since
\(U=\bigcap_{j=1}^k V_{\boldsymbol{\alpha}_j}=\big\{\,\boldsymbol{x}\in\mathbb{F}_q^n : (\boldsymbol{x},\boldsymbol{\alpha}_1)=\cdots=(\boldsymbol{x},\boldsymbol{\alpha}_k)=0\,\big\},\)
if $U$ is a cyclically covering subspace of \(\mathbb{F}_{q}^n\), then for any \(\boldsymbol{x}\in\mathbb{F}_q^n\), there exists \(i\in [n]\) such that
\(\big(\boldsymbol{x},\tau^i(\boldsymbol{\alpha}_1)\big)=\cdots=\big(\boldsymbol{x},\tau^i(\boldsymbol{\alpha}_k)\big)=0.\)
The proof of Theorem \ref{Thm2} follows from the Discrete Fourier Transform and Theorem \ref{Thm1}.
\end{proof}
When \(k=1\), Theorem \ref{Thm2} reduces to Theorem \ref{Thm1}.

\section{Applications of Theorem \ref{Thm1}}

In 2019, Aaronson, Groenland, and Johnston \cite{Aaronson-Groenland-Johnston} established that \(h_q(n) = 0\) whenever $q$ is an odd prime and \(n > q\) is a prime for which $q$ is a primitive root modulo $n$, in contrast to the case \(q=2\), where \(h_2(t) = 2\) for every prime \(t > 3\) with $2$ a primitive root modulo $t$. We now give a unified proof of this result via Theorem \ref{Thm1}.

\begin{thm}\label{Thm3.1}
Let $q$ be an odd prime and \(n > q\) a prime. If $q$ is a primitive root modulo $n$, then \(h_q(n) = 0\); by contrast, we have \(h_2(n) \geq 2\) for all primes \(n > 3\) such that $2$ is a primitive root modulo $n$.
\end{thm}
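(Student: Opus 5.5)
Since $n$ is prime and $q$ is a primitive root modulo $n$, the $q$-cyclotomic cosets modulo $n$ are exactly $C_0=\{0\}$ and $C_1=\{1,\dots,n-1\}$. Correspondingly, $x^n-1=(x-1)\Phi_n(x)$ with $\Phi_n$ irreducible over $\mathbb F_q$. By Lemma \ref{Lem03}, $\mathbb F_q^n=W_0\oplus W_1$, where $W_0$ is the line of constant vectors and $W_1=\ker\Phi_n(\tau)=\{\boldsymbol x:\sum_j x_j=0\}$ is the zero-sum code of dimension $n-1$. I will use Lemma \ref{lem:vanishing-full-weight}(iii): $h_q(n)=0$ if and only if the single minimal cyclic code attached to the unique nonzero coset, namely $W_1$, contains a full-weight codeword. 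Equivalently, by Theorem \ref{Thm1} and Corollary \ref{coro2-2}, any $\boldsymbol\alpha$ with $0\notin\operatorname{supp}(\widehat{\boldsymbol\alpha})$ has $-\operatorname{supp}(\widehat{\boldsymbol\alpha})=C_1$, and $V_{\boldsymbol\alpha}$ fails to cover exactly when $W_1$ has a vector with all entries nonzero.

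For $q$ odd, I will exhibit a full-weight zero-sum vector directly. Since $q\ge 3$, there are two distinct nonzero elements $a,b\in\mathbb F_q$, and we need $n_1a+n_2b=0$ with $n_1+n_2=n$. The simplest choice is $b=-a$ when $n$ is even, but $n$ is an odd prime. So instead I will fill $n-2$ coordinates with $1$ and then choose the remaining two coordinates $y,z\neq0$ with $y+z=-(n-2)$ in $\mathbb F_q$. Writing $c=-(n-2)$, I need $y\neq0$ and $y\neq c$; for $q\ge3$ such a $y$ exists, and then $z=c-y\neq0$. This gives a full-weight codeword in $W_1$, hence no hyperplane is cyclically covering and $h_q(n)=0$. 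Here I will only use $q\ge3$ and the two-coset structure; the hypothesis $n>q$ serves merely to guarantee $n\nmid q$ and to match the cited statement.

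For $q=2$, a full-weight vector in $\mathbb F_2^n$ is $\mathbf 1$, whose coordinate sum is $n\equiv1$, so it is not in $W_1$. Hence $W_1$ has no full-weight codeword, and by Lemma \ref{lem:vanishing-full-weight} $h_2(n)\ge1$. To get $h_2(n)\ge2$, I will use Theorem \ref{Thm2} with $k=2$, which is the main obstacle. I need two independent $\boldsymbol\alpha_1,\boldsymbol\alpha_2$ with zero sum such that every $\boldsymbol x$ has a shift $i$ with $(\boldsymbol x,\tau^i\boldsymbol\alpha_1)=(\boldsymbol x,\tau^i\boldsymbol\alpha_2)=0$. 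The plan is to take $\boldsymbol\alpha_1=\boldsymbol e_0+\boldsymbol e_1$ and $\boldsymbol\alpha_2=\boldsymbol e_1+\boldsymbol e_2$, so that the condition asks for three consecutive equal coordinates $x_{i}=x_{i+1}=x_{i+2}$ cyclically. This fails for alternating patterns, so the choice needs adjusting. I would next try $\boldsymbol\alpha_1=\boldsymbol e_0+\boldsymbol e_1$ and $\boldsymbol\alpha_2=\boldsymbol e_0+\boldsymbol e_2$, or more generally pairs of weight-two vectors $\boldsymbol e_0+\boldsymbol e_a$, $\boldsymbol e_0+\boldsymbol e_b$, and argue combinatorially using the oddness of $n$ and the parity of sign changes around the cycle. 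Alternatively, I can bypass the search by invoking Lemma \ref{lem1.1}(i), which gives $h_2(n)\ge2$ for all odd $n>3$ and covers our primes. I would present the $h_2(n)\ge1$ step via the DFT criterion and then cite that lemma for the full bound.
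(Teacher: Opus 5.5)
\textbf{Odd $q$ and $h_2(n)\ge1$: correct, and simpler than the paper.} Your odd-$q$ argument works. Since $W_1=\ker(1+\tau+\dots+\tau^{n-1})$ is the zero-sum code, the word $(1,\dots,1,y,c-y)$ with $c=-(n-2)$ and $y\notin\{0,c\}$ is a full-weight codeword, and Lemma~\ref{lem:vanishing-full-weight} finishes. This needs only $|\mathbb F_q|\ge3$, so it covers every prime power $q\ge3$ and never uses $n>q$. The paper instead picks $\widehat{\boldsymbol c}_1\in\{1,1+\omega\}$ in $P_{\widehat{\boldsymbol c}}(t)=\operatorname{Tr}(\widehat{\boldsymbol c}_1t)$, splitting on whether $q\mid n-1$, and evaluates traces. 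Unwinding the IDFT, this just produces the zero-sum full-weight words $\frac1n(n-1,-1,\dots,-1)$ and $\frac1n(-1,-1,-2,\dots,-2)$, so your version removes an unnecessary case split. Likewise, your proof of $h_2(n)\ge1$ (the only full-weight word $\mathbf 1$ has coordinate sum $n\equiv1$) is a cleaner form of the paper's Step~1 parity computation.

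\textbf{The gap: $h_2(n)\ge2$.} This is the substantive half of the second claim, and you give no argument of your own for it. Your candidates cannot work in general. A pair $\boldsymbol e_0+\boldsymbol e_a,\ \boldsymbol e_0+\boldsymbol e_b$ (your first attempt included) is cyclically covering iff every $2$-colouring of $\mathbb Z/n\mathbb Z$ has a monochromatic translate of $\{0,a,b\}$. For $n=5$, where $2$ is a primitive root, every $3$-subset of $\mathbb Z/5\mathbb Z$ is an affine image of $\{0,1,2\}$. The word $(0,1,0,1,1)$ has no monochromatic translate of $\{0,1,2\}$, and composing it with the corresponding multiplier permutation gives a word with none for $\{0,a,b\}$. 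Falling back on Lemma~\ref{lem1.1}(i) is logically admissible, but with that licence the whole theorem is simply Lemma~\ref{lem1.2}(iv)--(v). The point here is to derive it from Theorem~\ref{Thm2}.

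\textbf{The missing idea.} Take two normals with the same Fourier support $C_1$ that differ by a scalar from a subfield. Put $t=\frac{n-1}{2}$ and set $\widehat{\boldsymbol\beta}_1=\lambda\widehat{\boldsymbol\alpha}_1$ with $\lambda\in\mathbb F_{2^t}\setminus\{0,1\}$; this set is nonempty since $n>3$, and $\boldsymbol\alpha,\boldsymbol\beta$ are independent because $\lambda\ne0,1$.
\begin{itemize}
\item Since $2^t\equiv-1\pmod n$, this forces $\widehat{\boldsymbol\beta}_{-1}=\lambda^{2^t}\widehat{\boldsymbol\alpha}_{-1}=\lambda\widehat{\boldsymbol\alpha}_{-1}$.
\item For any $\boldsymbol x$, let $a=\widehat{\boldsymbol x}_1\widehat{\boldsymbol\alpha}_{-1}$ and let $\operatorname{Tr}$ denote the trace from $\mathbb F_{2^{n-1}}$ to $\mathbb F_2$. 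Then $x_i:=(\boldsymbol x,\tau^i\boldsymbol\alpha)=\operatorname{Tr}(a\omega^{-i})$ and $y_i:=(\boldsymbol x,\tau^i\boldsymbol\beta)=\operatorname{Tr}(\lambda a\omega^{-i})$.
\item Summing over $i$ gives $\sum_i x_i=\sum_i y_i=0$.
\item In $\sum_i x_iy_i$, only the pairs $(k,l)$ with $2^k+2^l\equiv0\pmod n$ survive, i.e.\ $k\equiv l+t\pmod{n-1}$. These come in symmetric pairs contributing $a^{2^k+2^l}(\lambda^{2^k}+\lambda^{2^l})=0$, because $\lambda^{2^t}=\lambda$.
\end{itemize}
Hence if no $i$ had $x_i=y_i=0$, the numbers of $i$ with $(x_i,y_i)=(1,0),(0,1),(1,1)$ would all be even, contradicting that $n$ is odd. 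For $n=5$ this construction gives $U=\{\boldsymbol x:x_1=x_4,\ x_2=x_3\}$, which is not of your weight-two form.
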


\begin{proof}
{\bf Case 1}: Let $q$ be an odd prime and \(n > q\) a prime. Suppose that $q$ is a primitive root modulo $n$. To prove \(h_q(n) = 0\), it suffices to show that no \((n-1)\)-dimensional subspace \(V_{\boldsymbol{\alpha}}\) is a cyclically covering subspace of \(\mathbb{F}_q^n\). Since \(q\) is a primitive root modulo \(n\), we have \(\gcd(q,n) = 1\) and \(\operatorname{ord}_n(q) = n-1\). Over \(\mathbb{F}_q[x]\), the polynomial \(x^n - 1\) factors as
\[x^n - 1 = (x-1)\big(1+x+\cdots+x^{n-1}\big),\]
which yields exactly two cyclotomic cosets: \(C_0 = \{0\}\) and \(C_1 = \{1,2,\dots,n-1\}\). By Corollary \ref{Lem5}, \(V_{\boldsymbol{\alpha}}\) cannot be a cyclically covering if \(0 \in \operatorname{supp}(\widehat{\boldsymbol{\alpha}})\). We therefore focus on the case where \(0 \notin \operatorname{supp}(\widehat{\boldsymbol{\alpha}})\). In this setting, \(\operatorname{supp}(\widehat{\boldsymbol{\alpha}}) = C_1\), and \(-\operatorname{supp}(\widehat{\boldsymbol{\alpha}}) = -C_1 = C_1\).

According to Lemma \ref{Lem6}, to prove that \(V_{\boldsymbol{\alpha}}\) is not a cyclically covering subspace of \(\mathbb{F}_q^n\), we only need to construct a vector \(\widehat{\boldsymbol{c}} \in \operatorname{DFT}(\mathbb{F}_q^n)\) satisfying the two conditions below:
\(\operatorname{supp}(\widehat{\boldsymbol{c}}) \subseteq C_1~(\text{i.e.,}~ \widehat{\boldsymbol{c}}_0 = 0),\) and \(P_{\widehat{\boldsymbol{c}}}(\omega^i) \neq 0~\text{for all } 0 \leq i \leq n-1,\)
where \(P_{\widehat{\boldsymbol{c}}}(t)=\operatorname{Tr}_{\mathbb{F}_q}^{\mathbb{F}_{q^{n-1}}}(\widehat{\boldsymbol{c}}_1 t)\) and \(\widehat{\boldsymbol{c}}_1\in\mathbb{F}_{q^{n-1}}\).

We consider two cases for the construction of \(\widehat{\boldsymbol{c}}_1\).

(i) When \(q\nmid n-1\), we take \(\widehat{\boldsymbol{c}}_1 = 1\), so that \(P_{\widehat{\boldsymbol{c}}}(t) = \operatorname{Tr}_{\mathbb{F}_q}^{\mathbb{F}_{q^{n-1}}}(t)\). For \(i = 0\), we have \(P_{\widehat{\boldsymbol{c}}}(\omega^0) = \operatorname{Tr}_{\mathbb{F}_q}^{\mathbb{F}_{q^{n-1}}}(1) = n-1\). Since \(q \nmid n-1\), it follows that \(n-1 \not\equiv 0 \pmod{q}\), and thus \(\operatorname{Tr}_{\mathbb{F}_q}^{\mathbb{F}_{q^{n-1}}}(1) \neq 0\). For \(1 \leq i \leq n-1\), \(\omega^i\) is also a primitive \(n\)-th root of unity, and since \(q\) is a primitive root modulo \(n\), we compute
\[\operatorname{Tr}_{\mathbb{F}_q}^{\mathbb{F}_{q^{n-1}}}(\omega^i) = \sum_{k=0}^{n-2} \omega^{i q^k} = \sum_{j=1}^{n-1} \omega^{i j} = \left( \sum_{j=0}^{n-1} \omega^{i j} \right) - 1 = 0 - 1 = -1.\]
As \(q\) is an odd prime, \(-1 \not\equiv 0 \pmod{q}\), so \(\operatorname{Tr}_{\mathbb{F}_q}^{\mathbb{F}_{q^{n-1}}}(\omega^i) \neq 0\). Therefore, \(P_{\widehat{\boldsymbol{c}}}(\omega^i) \neq 0\) for all \(i\).

(ii) When \(q \mid n-1\), we have \(\operatorname{Tr}_{\mathbb{F}_q}^{\mathbb{F}_{q^{n-1}}}(1) = n-1 \equiv 0 \pmod{q}\). In this case, we take \(\widehat{\boldsymbol{c}}_1 = 1 + \omega\), so that \(P_{\widehat{\boldsymbol{c}}}(t) = \operatorname{Tr}_{\mathbb{F}_q}^{\mathbb{F}_{q^{n-1}}}((1+\omega)t)\). For \(i = 0\), we get
\[P_{\widehat{\boldsymbol{c}}}(\omega^0) = \operatorname{Tr}_{\mathbb{F}_q}^{\mathbb{F}_{q^{n-1}}}(1+\omega) = \operatorname{Tr}_{\mathbb{F}_q}^{\mathbb{F}_{q^{n-1}}}(1) + \operatorname{Tr}_{\mathbb{F}_q}^{\mathbb{F}_{q^{n-1}}}(\omega) = 0 + (-1) = -1 \neq 0.\]
For \(i = n-1\), we have
\[P_{\widehat{\boldsymbol{c}}}(\omega^{n-1}) = \operatorname{Tr}_{\mathbb{F}_q}^{\mathbb{F}_{q^{n-1}}}((1+\omega)\omega^{n-1}) = \operatorname{Tr}_{\mathbb{F}_q}^{\mathbb{F}_{q^{n-1}}}(\omega^{n-1}) + \operatorname{Tr}_{\mathbb{F}_q}^{\mathbb{F}_{q^{n-1}}}(\omega^n) = -1 + 0 = -1 \neq 0.\]
For \(1 \leq i \leq n-2\), we obtain
\[P_{\widehat{\boldsymbol{c}}}(\omega^i) = \operatorname{Tr}_{\mathbb{F}_q}^{\mathbb{F}_{q^{n-1}}}(\omega^i) + \operatorname{Tr}_{\mathbb{F}_q}^{\mathbb{F}_{q^{n-1}}}(\omega^{i+1}) = -1 + (-1) = -2.\] Since \(q\) is an odd prime, \(-2 \not\equiv 0 \pmod{q}\). Thus, \(P_{\widehat{\boldsymbol{c}}}(\omega^i) \neq 0\) for all \(i\).

By Lemma \ref{Lem6}, we conclude that \(V_{\boldsymbol{\alpha}}\) is not a cyclically covering subspace of \(\mathbb{F}_q^n\).

{\bf Case 2}: Suppose \(n>3\) is prime and $2$ is a primitive root modulo $n$. We prove $h_2(n)\geq2$ in two steps.

{\bf Step 1}: We start with the proof that \(h_2(n) \ge 1\). Choose \(\boldsymbol{\alpha}\) such that \(\operatorname{supp}(\widehat{\boldsymbol{\alpha}}) = C_1\). For instance, we may take \(\boldsymbol{0}\neq\boldsymbol{\alpha} \in \ker\big(1+\tau+\cdots+\tau^{n-1}\big)\). Then \(0 \notin \operatorname{supp}(\widehat{\boldsymbol{\alpha}})\) and \(-\operatorname{supp}(\widehat{\boldsymbol{\alpha}}) = C_1\). We aim to prove that for any vector \(\widehat{\boldsymbol{c}} \in \operatorname{DFT}(\mathbb{F}_2^n)\) satisfying \(\operatorname{supp}(\widehat{\boldsymbol{c}}) \subseteq -\operatorname{supp}(\widehat{\boldsymbol{\alpha}})\), there exists an index \(i \in [n]\) such that \(P_{\widehat{\boldsymbol{c}}}(\omega^i) = 0\), where \(P_{\widehat{\boldsymbol{c}}}(t)=\operatorname{Tr}_{\mathbb{F}_2}^{\mathbb{F}_{2^{n-1}}}(\widehat{\boldsymbol{c}}_1 t)\) and \(\widehat{\boldsymbol{c}}_1\in\mathbb{F}_{2^{n-1}}\). We compute the sum
\[\sum_{i=0}^{n-1} P_{\widehat{\boldsymbol{c}}}(\omega^i)= \sum_{i=0}^{n-1} \operatorname{Tr}_{\mathbb{F}_2}^{\mathbb{F}_{2^{n-1}}}(\widehat{\boldsymbol{c}}_1 \omega^i)
= \operatorname{Tr}_{\mathbb{F}_2}^{\mathbb{F}_{2^{n-1}}}\left(\widehat{\boldsymbol{c}}_1 \sum_{i=0}^{n-1} \omega^i\right)= \operatorname{Tr}_{\mathbb{F}_2}^{\mathbb{F}_{2^{n-1}}}(\widehat{\boldsymbol{c}}_1 \cdot 0) = 0.\]
Thus, the number of indices $i$ for which \(P_{\widehat{\boldsymbol{c}}}(\omega^i) = 1\) is even. Since $n$ is odd, these values cannot all equal $1$. It follows that there exists at least one $i$ such that \(P_{\widehat{\boldsymbol{c}}}(\omega^i) = 0\). Thus, \(V_{\boldsymbol{\alpha}}\) is a cyclically covering subspace, so \(h_2(n) \ge 1\).

{\bf Step 2}: We now show that \(h_2(n) \ge 2\). We proceed by explicit construction. Let \(U = V_{\boldsymbol{\alpha}} \cap V_{\boldsymbol{\beta}}\), where \(\boldsymbol{\alpha}, \boldsymbol{\beta} \in \mathbb{F}_2^n\) are linearly independent over \(\mathbb{F}_2\), and \(0 \notin \operatorname{supp}(\widehat{\boldsymbol{\alpha}})\), \(0 \notin \operatorname{supp}(\widehat{\boldsymbol{\beta}})\). Since there is exactly one non-zero cyclotomic coset, it follows that \(\operatorname{supp}(\widehat{\boldsymbol{\alpha}}) = \operatorname{supp}(\widehat{\boldsymbol{\beta}}) = C_1\).

Choose any \(\widehat{\boldsymbol{\alpha}}\) such that \(\widehat{\boldsymbol{\alpha}}_1 \neq 0\). The remaining components are automatically determined by the DFT image condition \((\widehat{\boldsymbol{\alpha}}_{2^k} = \widehat{\boldsymbol{\alpha}}_1^{2^k})\), ensuring that \(\boldsymbol{\alpha} \in \mathbb{F}_2^n\). Let \(t = \frac{n-1}{2}\). Choose any
\(\lambda \in \mathbb{F}_{2^t} \setminus \{0,1\},\)
which exists because \(n>3\) implies \(t \ge 2\). Define \(\widehat{\boldsymbol{\beta}}\) with support \(C_1\) by setting
\(\widehat{\boldsymbol{\beta}}_1 = \lambda \widehat{\boldsymbol{\alpha}}_1,\)
so that \(\widehat{\boldsymbol{\beta}}_{-1} = \lambda \widehat{\boldsymbol{\alpha}}_{-1}\). Indeed, since \(-1 \equiv 2^t \pmod{n}\), we have
\[\widehat{\boldsymbol{\beta}}_{-1} = \widehat{\boldsymbol{\beta}}_{2^t} = (\widehat{\boldsymbol{\beta}}_1)^{2^t} = \left(\lambda \widehat{\boldsymbol{\alpha}}_1\right)^{2^t} = \lambda^{2^t} (\widehat{\boldsymbol{\alpha}}_1)^{2^t} = \lambda \widehat{\boldsymbol{\alpha}}_{-1}.\]
Then $\boldsymbol{\beta}\in\mathbb{F}_2^n$, and $\boldsymbol{\beta}$ is linearly independent of $\boldsymbol{\alpha}$ over $\mathbb{F}_2$ owing to $\lambda\neq1$.

Suppose $U= V_{\boldsymbol{\alpha}} \cap V_{\boldsymbol{\beta}}$ is not a cyclically covering subspace. Then there exists some \(\boldsymbol{x} \in \mathbb{F}_2^n\) such that \(\boldsymbol{x} \notin \tau^i(U)\) for every \(i \in [n]\). This implies that $( \boldsymbol{x}, \tau^i(\boldsymbol{\alpha}) ) \neq 0$ or $( \boldsymbol{x}, \tau^i(\boldsymbol{\beta}) ) \neq 0,$ for every \(i \in [n]\).
In other words, there is no index $i$ for which both inner products vanish simultaneously. Since \(\operatorname{supp}(\widehat{\boldsymbol{\alpha}}) = \operatorname{supp}(\widehat{\boldsymbol{\beta}}) = C_1\), we have
\[( \boldsymbol{x}, \tau^i(\boldsymbol{\alpha}) ) = \operatorname{Tr}_{\mathbb{F}_2}^{\mathbb{F}_{2^{n-1}}}(a \omega^{-i}), ~~ ( \boldsymbol{x}, \tau^i(\boldsymbol{\beta}) ) = \operatorname{Tr}_{\mathbb{F}_2}^{\mathbb{F}_{2^{n-1}}}(b \omega^{-i}),\]
where \(a = \widehat{\boldsymbol{x}}_1 \widehat{\boldsymbol{\alpha}}_{-1}\in\mathbb{F}_{2^{n-1}}\) and \(b = \widehat{\boldsymbol{x}}_1 \widehat{\boldsymbol{\beta}}_{-1}=\lambda a\in\mathbb{F}_{2^{n-1}}\). As $i$ runs over all indices, we may identify \(\omega^{-i}\) with \(\omega^i\), so the condition reduces to: for every $i$,
\(\left(\operatorname{Tr}_{\mathbb{F}_2}^{\mathbb{F}_{2^{n-1}}}(a \omega^i), \operatorname{Tr}_{\mathbb{F}_2}^{\mathbb{F}_{2^{n-1}}}(\lambda a \omega^i)\right) \neq (0, 0).\) We first define \(x_i = \operatorname{Tr}_{\mathbb{F}_2}^{\mathbb{F}_{2^{n-1}}}(a \omega^i)\) and \(y_i = \operatorname{Tr}_{\mathbb{F}_2}^{\mathbb{F}_{2^{n-1}}}(\lambda a \omega^i)\), both taking values in \(\{0,1\}\). It is straightforward to verify that
\[\sum_{i=0}^{n-1} x_i = \sum_{i=0}^{n-1} \operatorname{Tr}_{\mathbb{F}_2}^{\mathbb{F}_{2^{n-1}}}(a \omega^i) = 0,\]
and similarly \(\sum_{i=0}^{n-1} y_i = 0\). Furthermore, we compute
\[\begin{aligned}
\sum_{i=0}^{n-1} x_i y_i &= \sum_{i=0}^{n-1} \operatorname{Tr}_{\mathbb{F}_2}^{\mathbb{F}_{2^{n-1}}}(a\omega^i)\operatorname{Tr}(\lambda a\omega^i)\\
&= \sum_{i=0}^{n-1} \left( \sum_{k=0}^{n-2} (a\omega^i)^{2^k} \right) \left( \sum_{l=0}^{n-2} (\lambda a\omega^i)^{2^l} \right)\\
&= \sum_{k=0}^{n-2} \sum_{l=0}^{n-2} a^{2^k} (\lambda a)^{2^l} \sum_{i=0}^{n-1} \omega^{i(2^k + 2^l)}.
\end{aligned}\]
Note that
\[\sum_{i=0}^{n-1} \omega^{i(2^k + 2^l)} = 
\begin{cases}
1, & \text{if } 2^k + 2^l \equiv 0 \pmod{n},\\
0, & \text{otherwise}.
\end{cases}\]
The condition \(2^k + 2^l \equiv 0 \pmod{n}\) is equivalent to \(2^{k-l} \equiv -1 \pmod{n}\), which in turn holds if and only if \(k-l \equiv t \pmod{n-1}\), where \(t = \frac{n-1}{2}\) satisfies \(2^t \equiv -1 \pmod{n}\). When \(k = l\), we have \(2^k + 2^l = 2^{k+1} \not\equiv 0 \pmod{n}\), so these terms contribute nothing. Suppose there exists a pair \((k_0, l_0)\) with \(k_0 \neq l_0\) such that \(2^{k_0} + 2^{l_0} \equiv 0 \pmod{n}\). By symmetry, the pair \((l_0, k_0)\) also satisfies the same condition, so all such pairs occur in symmetric pairs.
The contribution of each such pair is
\[a^{2^k} (\lambda a)^{2^l} + a^{2^l} (\lambda a)^{2^k}= a^{2^k + 2^l} \left( \lambda^{2^l} + \lambda^{2^k} \right).\]
Since \(k - l \equiv t \pmod{n-1}\), we may write \(k \equiv l+t \pmod{n-1}\). Then
\(\lambda^{2^k} = (\lambda^{2^t})^{2^l} = \lambda^{2^l},\)
where the last equality holds because \(\lambda \in \mathbb{F}_{2^t}\) satisfies \(\lambda^{2^t} = \lambda\). Hence the bracketed term vanishes in characteristic 2, so the contribution of each pair is zero. All non-zero terms therefore cancel pairwise, and the total sum is zero. Thus, \(\sum_{i=0}^{n-1} x_i y_i = 0\) for any a.

Suppose for all $i$, \((x_i, y_i) \neq (0,0)\). Then the only possible non-zero pairs are \((1,0)\), \((0,1)\) and \((1,1)\). Let \(n_1, n_2, n_3\) denote the number of occurrences of these three pairs respectively, so \(n_1 + n_2+ n_3 = n\). From the vanishing sums above, we obtain
\[\sum x_i = n_1 + n_3 \equiv 0 \pmod{2}, \quad \sum y_i = n_2 + n_3 \equiv 0 \pmod{2}, \quad \sum x_i y_i = n_3 \equiv 0 \pmod{2}.\]
It follows that $n_3$ is even, and hence both $n_1$ and $n_2$ are even. This yields a contradiction, since \(n = n_1 + n_2 + n_3\) is an odd prime. Therefore, there must exist some $i$ such that \(x_i = y_i = 0\), which implies \(\boldsymbol{x} \in \tau^i(U)\). Thus $U$ is a cyclically covering subspace, and we conclude that \(h_2(n) \ge 2\).
\end{proof}

In Theorem \ref{Thm3.1}, we re-proved that if $q$ is an odd prime and \(n > q\) is a prime, then $q$ being a primitive root modulo $n$ implies \(h_q(n)=0\). However, the converse of this proposition does not necessarily hold. We present the following counterexample.

\begin{exmp}\label{3-2}
Let \(q=5\) and \(n=11\). Here, \(\operatorname{ord}_{11}(5)=5<10\), so $5$ is not a primitive root modulo $11$. We aim to prove \(h_5(11)=0\). The polynomial \(x^{11}-1\) factors as
\[x^{11}-1 = f_0(x)f_1(x)f_2(x),\]
where
\[\begin{split}
f_0(x)&=x-1,\\
f_1(x)&=x^5 + 2x^4 + 4x^3 + x^2 + x + 4,\\
f_2(x)&=x^5 + 4x^4 + 4x^3 + x^2 + 3x + 4.
\end{split}\]
Accordingly, \(\mathbb{F}_5^{11}\) decomposes into the direct sum
\[\mathbb{F}_5^{11} = W_0 \oplus W_1 \oplus W_2,\]
with \(W_i = \ker f_i(\tau)\) for \(i=0,1,2\). By Theorem \ref{Thm1} and Lemma \ref{lem:vanishing-full-weight}, it suffices to show that full-weight codewords exist in \(W_1\), \(W_2\) and \(W_1\oplus W_2\).

In fact, it is easy to verify that
\[\boldsymbol{u}_1 = (4, 2, 3, 4, 4, 3, 1, 1, 1, 1, 1),\quad
\boldsymbol{u}_2 = (3, 4, 4, 3, 2, 4, 1, 1, 1, 1, 1)\]
and \(\boldsymbol{u}_1+\boldsymbol{u}_2\) are full-weight codewords in \(W_1\), \(W_2\) and \(W_1\oplus W_2\), respectively. Hence we conclude that \(h_5(11) = 0\).

Adopting the same argument used to prove \(h_5(11)=0\), we obtain the concrete examples listed in Table \ref{tab:h5-values}. For each case where \(h_5(n)=0\), we find the corresponding full-weight codewords. We remark that among the primes listed in Table \ref{tab:h5-values} for which $5$ is not a primitive root, namely $11, 13, 19, 29, 31$, only \(h_5(31)=2>0\).
\begin{table}[htbp]
\centering 
\caption{The specific values of \(h_5(n)\) for \(6 \leq n \leq 32\)}\label{tab:h5-values} 
\vspace{0.2cm}
\begin{tabular}{ccclccclcccl}
\toprule
$n$ & $h_5(n)$ & \multicolumn{1}{c}{Reason} & \qquad & $n$ & $h_5(n)$ & \multicolumn{1}{c}{Reason} \qquad & $n$ & $h_5(n)$ & \multicolumn{1}{c}{Reason} \\
\midrule
6 & 0 & \cite{Huang} & & 15 & 0 & \cite{Aaronson-Groenland-Johnston} & 24 & 1 & \cite{Cameron-Ellis-Raynaud} \\
7 & 0 & \cite{Aaronson-Groenland-Johnston} & & 16 & 0 & Lemma \ref{lem:vanishing-full-weight} & 25 & 0 & \cite{Aaronson-Groenland-Johnston} \\
8 & 0 & Lemma \ref{lem:vanishing-full-weight} & & 17 & 0 & \cite{Aaronson-Groenland-Johnston} & 26 & 0 & \cite{Li-Yuan-Li-Zeng} \\
9 & 0 & Lemma \ref{lem:vanishing-full-weight} & & 18 & 0 & \cite{Huang} & 27 & 0 & Lemma \ref{lem:vanishing-full-weight} \\
10 & 0 & \cite{Aaronson-Groenland-Johnston} or \cite{Cameron-Ellis-Raynaud} & & 19 & 0 & Lemma \ref{lem:vanishing-full-weight} & 28 & 0 & Lemma \ref{lem:vanishing-full-weight} \\
11 & 0 & Example \ref{3-2} & & 20 & 0 & \cite{Aaronson-Groenland-Johnston} or \cite{Cameron-Ellis-Raynaud} & 29 & 0 & Lemma \ref{lem:vanishing-full-weight} \\
12 & 0 & Lemma \ref{lem:vanishing-full-weight} & & 21 & 0 & Lemma \ref{lem:vanishing-full-weight} & 30 & 0 & \cite{Li-Yuan-Li-Zeng} \\
13 & 0 & Lemma \ref{lem:vanishing-full-weight} & & 22 & 0 & \cite{Li-Yuan-Li-Zeng} & 31 & 2 & \cite{Cameron-Ellis-Raynaud} \\
14 & 0 & \cite{Huang} & & 23 & 0 & \cite{Aaronson-Groenland-Johnston} & 32 & 0 & Lemma \ref{lem:vanishing-full-weight} \\
\bottomrule
\end{tabular}
\end{table}
\end{exmp}

Although the condition that $q$ is not a primitive root modulo $n$ does not imply \(h_q(n)\ge1\) in general, we have the following result.

\begin{thm}\label{Thm3.2}
Let \(n>3\) be a prime number. If \(\operatorname{ord}_n(3)\) is odd, then \(h_3(n) \ge 1\).
\end{thm}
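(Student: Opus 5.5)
The plan is to exhibit one hyperplane $V_{\boldsymbol{\alpha}}$ that is cyclically covering; its codimension is $1$, which gives $h_3(n)\ge 1$. By Theorem \ref{Thm1} (equivalently, Corollary \ref{coro2-2}), it suffices to find a nonzero cyclotomic coset $C$ such that the minimal cyclic code with DFT support in $C$ has no full-weight codeword. We then take $\boldsymbol{\alpha}$ with $\operatorname{supp}(\widehat{\boldsymbol{\alpha}})=-C$.

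\textbf{Step 1: choice of coset and of $\boldsymbol{\alpha}$.} Let $d=\operatorname{ord}_n(3)$, which is odd by hypothesis. If $-1\equiv 3^j\pmod n$ for some $j$, then $3^{2j}\equiv1$, so $d\mid 2j$. Since $d$ is odd, this gives $d\mid j$, hence $3^j\equiv 1$, and so $-1\equiv 1\pmod n$, which is impossible for $n>3$. Therefore $-1\notin\langle 3\rangle\subset(\mathbb{Z}/n\mathbb{Z})^\times$. Let $W$ be the minimal cyclic code attached to $C_{-1}$, and pick $\boldsymbol{0}\ne\boldsymbol{\alpha}\in W$. By Lemma \ref{Lem4}, $\operatorname{supp}(\widehat{\boldsymbol{\alpha}})=C_{-1}$. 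This set does not contain $0$, and $-\operatorname{supp}(\widehat{\boldsymbol{\alpha}})=C_1$.

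\textbf{Step 2: every $\boldsymbol{c}$ with $\operatorname{supp}(\widehat{\boldsymbol{c}})\subseteq C_1$ has a zero coordinate.} Write $\widehat{\boldsymbol{c}}_{3^k}=a^{3^k}$ with $a\in\mathbb{F}_{3^d}$. Then, up to the nonzero factor $1/n$,
\[
c_i=\sum_{k=0}^{d-1}a^{3^k}\omega^{-i3^k}\in\mathbb{F}_3 .
\]
The key computation is the sum of squares:
\[
\sum_{i=0}^{n-1}c_i^2=\sum_{k,l=0}^{d-1}a^{3^k+3^l}\sum_{i=0}^{n-1}\omega^{-i(3^k+3^l)} .
\]
The inner sum vanishes unless $3^k+3^l\equiv0\pmod n$, that is, unless $3^{k-l}\equiv-1$. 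Step 1 shows this never happens, so $\sum_i c_i^2=0$ in $\mathbb{F}_3$.

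Now suppose all $c_i\neq0$. Then $c_i^2=1$ for every $i$, so $\sum_i c_i^2=n\bmod 3$. This is nonzero because $n$ is a prime larger than $3$, a contradiction. Hence $\operatorname{wt}(\boldsymbol{c})<n$ for every such $\boldsymbol{c}$.

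\textbf{Conclusion.} Both conditions of Theorem \ref{Thm1} hold, so $V_{\boldsymbol{\alpha}}$ is cyclically covering and $h_3(n)\ge1$.

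The main obstacle is finding the right invariant, since the parity trick used for $q=2$ in Theorem \ref{Thm3.1} does not apply over $\mathbb{F}_3$. The quadratic sum $\sum_i c_i^2$ is the natural replacement, because every nonzero square in $\mathbb{F}_3$ equals $1$. The one delicate point is verifying that no cross terms survive, and this is exactly where the oddness of $\operatorname{ord}_n(3)$ enters, through $-1\notin\langle3\rangle$.
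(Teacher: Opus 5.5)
Your proof is correct and follows essentially the same route as the paper. A full-weight vector over $\mathbb{F}_3$ has self-inner product $n\neq 0$, but the Parseval identity (Lemma \ref{Lem02}), which you re-derive by expanding $\sum_i c_i^2$, forces this product to vanish because $C_1\cap(-C_1)=\emptyset$, i.e.\ $-1\notin\langle 3\rangle$. The remaining differences are cosmetic: you place the code on $C_1$ rather than $-C_1$, you conclude via Theorem \ref{Thm1} instead of Lemma \ref{lem:vanishing-full-weight}, and you prove $-1\notin\langle 3\rangle$, which the paper only asserts.
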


\begin{proof}
Let \(d=\operatorname{ord}_n(3)\). Since \(\operatorname{ord}_n(3)\) is odd, $3$ cannot be a primitive root modulo $n$. Take the non-zero cyclotomic coset \(C_1=\{1,3,3^2,\dots,3^{d-1}\}\). As \(\operatorname{ord}_n(3)\) is odd, we have \(C_1\neq -C_1\), which means \(C_1\cap (-C_1)=\emptyset\). Let \(C=-C_1\), and let \(W_C\) be the subspace of \(\mathbb{F}_3^n\) corresponding to the cyclotomic coset $C$. By Lemma \ref{lem:vanishing-full-weight}, it suffices to prove that \(W_C\) contains no full-weight codewords. Suppose, for contradiction, that there exists a full-weight codeword \(\boldsymbol{x}\in W_C\), i.e., \(x_i\in\mathbb{F}_3^\times=\{1,-1\}\) for all $i\in[n]$. For the standard inner product, we have
\[(\boldsymbol{x},\boldsymbol{x}) = \sum_{i=0}^{n-1} x_i^2 = \sum_{i=0}^{n-1} 1 = n \neq 0.\]
By Lemma \ref{Lem2}, \(\boldsymbol{x}\in W_C\) implies \(\operatorname{supp}(\widehat{\boldsymbol{x}})=C\). Combining this with Lemma \ref{Lem02} and the condition \(C\cap (-C)=\emptyset\), we get
\[(\boldsymbol{x},\boldsymbol{x}) = \frac{1}{n}\sum_{k=0}^{n-1}\widehat{\boldsymbol{x}}_k \widehat{\boldsymbol{x}}_{-k} = 0.\]
This is a contradiction. Hence, \(W_C\) has no full-weight codewords. Accordingly, by Lemma \ref{lem:vanishing-full-weight}, we obtain \(h_3(n)\geq1\).
\end{proof}

In Theorem \ref{Thm3.2}, we proved that if \(n>3\) is prime and \(\operatorname{ord}_n(3)\) is odd, then \(h_3(n)\ge 1\). We now show that there exist infinitely many such primes $n$.
\begin{lem}\label{Lem3-1}
There exist infinitely many primes \(n>3\) for which \(\operatorname{ord}_n(3)\) is odd, and the natural density of such primes among all primes is \(\dfrac{1}{3}\).
\end{lem}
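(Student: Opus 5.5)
The plan is to use the classical computation of Hasse (1966) for the density of primes $p$ for which a fixed integer $a$ has odd order. We split primes $n>3$ according to the $2$-adic valuation $v_2(n-1)=k\ge 1$. The key observation is that $\operatorname{ord}_n(3)$ is odd exactly when $\operatorname{ord}_n(3)$ divides $(n-1)/2^k$. Equivalently, $3$ must be a $2^k$-th power residue modulo $n$.

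For a fixed $k$, I would rephrase the condition in terms of splitting in a Kummer extension. Primes with $v_2(n-1)=k$ are those that split completely in $\mathbb{Q}(\zeta_{2^k})$ but not in $\mathbb{Q}(\zeta_{2^{k+1}})$. Among those, the condition that $3$ is a $2^k$-th power modulo $n$ means that $n$ splits completely in $K_k=\mathbb{Q}(\zeta_{2^k},3^{1/2^k})$. By the Chebotarev density theorem, the density of primes with $v_2(n-1)=k$ and odd order is
\[
\frac{1}{[K_k:\mathbb{Q}]}-\frac{1}{[K_k(\zeta_{2^{k+1}}):\mathbb{Q}]}.
\]
The main step is computing these degrees. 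Since $3$ is not a square in $\mathbb{Q}$, and $\sqrt3\notin\mathbb{Q}(\zeta_{2^j})$ (the quadratic subfields there are $\mathbb{Q}(i),\mathbb{Q}(\sqrt2),\mathbb{Q}(\sqrt{-2})$), I expect $[K_k:\mathbb{Q}]=2^{k-1}\cdot 2^k$. I also expect the adjunction of $\zeta_{2^{k+1}}$ to double this degree. This is where care is needed, because $\sqrt3\in\mathbb{Q}(\zeta_{12})$, and one must check that no unexpected entanglement occurs with $\zeta_{2^{k+1}}$; it does not, since $\mathbb{Q}(\sqrt3)$ has conductor $12$, which is not a power of $2$.

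Granting the degree computation, the $k$-th term equals $\frac{1}{2^{2k-1}}-\frac{1}{2^{2k}}=\frac{1}{2^{2k}}$. Summing over $k\ge1$ gives $\sum_{k\ge1}4^{-k}=\frac13$.

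One further point needs justification: passing from the individual densities to the density of the union requires a uniformity or tail argument. For this I would bound the tail by the density of primes with $n\equiv1\pmod{2^{K}}$, which is at most $1/2^{K-1}$ by Dirichlet's theorem. This tail bound lets us interchange the limit and the sum.

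Since $\frac13>0$, there are infinitely many such primes. I expect the Kummer degree computation (the disjointness of $3^{1/2^k}$ from the $2$-power cyclotomic fields) to be the main obstacle. I would settle it via the standard criterion that $[\mathbb{Q}(\zeta_{2^j},a^{1/2^k}):\mathbb{Q}(\zeta_{2^j})]=2^k$ whenever $\sqrt a\notin\mathbb{Q}(\zeta_{2^j})$, for $a>0$ not a square. Alternatively, since this is exactly Hasse's theorem that the density of primes for which $3$ has odd order is $1/3$, I could cite it directly.
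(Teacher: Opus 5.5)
Your argument is correct, but it is not the route the paper takes. The paper proves only the infinitude part itself, and does so elementarily. For each odd prime $\ell$ it takes an odd prime divisor $n$ of $3^\ell-1$. Since $\operatorname{ord}_n(3)\mid\ell$ and $\operatorname{ord}_n(3)\neq1$, this forces $\operatorname{ord}_n(3)=\ell$. It then uses $\gcd(3^{\ell_1}-1,3^{\ell_2}-1)=2$ to see that distinct $\ell$ give distinct $n$. The density $\frac13$ is simply quoted from Ballot.

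You instead carry out Hasse's computation. You stratify by $k=v_2(n-1)$ and translate odd order into complete splitting of $n$ in $\mathbb{Q}(\zeta_{2^k},3^{1/2^k})$ but not in $\mathbb{Q}(\zeta_{2^{k+1}},3^{1/2^k})$. The degrees are $2^{2k-1}$ and $2^{2k}$. Your degree criterion is right by Capelli's theorem: $-4=(1+i)^4$ once $i$ is in the base field, and $3>0$ when the base is $\mathbb{Q}$, so the exceptional case never arises, and $\sqrt3\notin\mathbb{Q}(\zeta_{2^j})$. You then sum $\sum_{k\ge1}4^{-k}=\frac13$ and justify the interchange with the tail bound from primes $n\equiv1\pmod{2^K}$; infinitude comes for free.

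Your route actually establishes the harder half of the statement, at the price of Chebotarev's theorem in natural-density form and Kummer theory. The paper's route is fully elementary and even explicit: every odd prime occurs as some $\operatorname{ord}_n(3)$. However, it only produces a thin subfamily of the relevant primes, so by itself it cannot yield the constant $\frac13$.

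One small precision: your tail estimate needs the prime number theorem for arithmetic progressions, i.e.\ the natural-density form. That is slightly more than Dirichlet's theorem on the infinitude of primes in progressions.
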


\begin{proof}
It follows from Theorem 3.1.3 in \cite{Ballot} or Section 5.1 in \cite{Ballot2007} that the natural density of primes $p$ for which \(\operatorname{ord}_p(3)\) is odd is equal to \(\frac13\). While this immediately yields that there are infinitely many such primes, we present a short direct proof below.

Let \(\ell\) be an odd prime. We have
\[3^\ell-1=(3-1)\big(1+3+3^2+\cdots+3^{\ell-1}\big).\]
The sum \(1+3+3^2+\cdots+3^{\ell-1}\) is odd, so \(3^\ell-1\) has at least one odd prime divisor. Pick an odd prime divisor $n$ of \(3^\ell-1\). Then \(3^\ell \equiv 1 \pmod{n}\), which implies \(\operatorname{ord}_n(3) \mid \ell\). Since \(\ell\) is prime, \(\operatorname{ord}_n(3)\) is either $1$ or \(\ell\). If \(\operatorname{ord}_n(3)=1\), then \(3 \equiv 1 \pmod{n}\), i.e., \(n \mid 2\). This contradicts the fact that $n$ is an odd prime. Hence \(\operatorname{ord}_n(3)=\ell\). Thus, for any given odd prime \(\ell\), we can construct an odd prime $n$ such that \(\operatorname{ord}_n(3)\) is odd.

Now take two distinct odd primes \(\ell_1\) and \(\ell_2\). It holds that
\[\gcd\big(3^{\ell_1}-1,\,3^{\ell_2}-1\big) = 3^{\gcd(\ell_1,\ell_2)}-1 = 3^1-1 = 2.\]
This means the sets of odd prime divisors of \(3^{\ell_1}-1\) and \(3^{\ell_2}-1\) are disjoint. Since each \(3^\ell-1\) possesses at least one odd prime divisor, letting \(\ell\) run over all odd primes yields infinitely many distinct odd primes $n$ for which \(\operatorname{ord}_n(3)\) is odd.
\end{proof}

Contrary to Theorem \ref{Thm3.2}, for any prime \(q>3\), we establish the following result.
\begin{thm}\label{Thm3-3}
Let $q$ be a prime with \(q>3\), and let \(n>q\) also be prime. If
\[\operatorname{ord}_n(q)=\frac{n-1}{2},\quad n\not\equiv 1\pmod{q},\quad (-1)^{\frac{n-1}{2}}n\not\equiv 1\pmod{q},\]
then \(h_q(n)=0\).
\end{thm}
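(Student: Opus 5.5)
The plan is to reduce the statement to a single explicit codeword using Lemma \ref{lem:vanishing-full-weight}(iii). Since $n$ is prime and $\operatorname{ord}_n(q)=\frac{n-1}{2}$, the $q$-cyclotomic cosets modulo $n$ are exactly three:
\begin{itemize}
\item $C_0=\{0\}$;
\item the set $Q$ of nonzero quadratic residues modulo $n$, which equals $C_1$ because $\langle q\rangle$ is the unique index-$2$ subgroup of $(\mathbb{Z}/n\mathbb{Z})^\times$;
\item the set $N$ of quadratic nonresidues.
\end{itemize}
So $h_q(n)=0$ holds as soon as both minimal cyclic codes $W_Q$ and $W_N$ contain a full-weight codeword. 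By Corollary \ref{cor:prime-multiplier-equivalence} these two codes are multiplier-equivalent, so it suffices to exhibit one full-weight codeword in the minimal code attached to $Q$.

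For the construction, I take $\widehat{\boldsymbol{c}}$ with $\widehat{\boldsymbol{c}}_k=1$ for $k\in Q$ and $\widehat{\boldsymbol{c}}_k=0$ otherwise. It satisfies $(\widehat{\boldsymbol{c}}_k)^q=\widehat{\boldsymbol{c}}_{qk}$ because $qQ=Q$, so by Lemma \ref{Lem0} we have $\boldsymbol{c}=\operatorname{IDFT}(\widehat{\boldsymbol{c}})\in\mathbb{F}_q^n$. By Lemma \ref{Lem2}, $\boldsymbol{c}$ lies in $W_Q$. Its coordinates are $c_i=\frac1n P(\omega^{-i})$ with $P(t)=\sum_{j\in Q}t^j$. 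There are three cases.
\begin{itemize}
\item At $i=0$ we get $P(1)=|Q|=\frac{n-1}{2}$. This is nonzero in $\mathbb{F}_q$ exactly because $n\not\equiv 1\pmod q$.
\item For $-i\in Q$ we get the Gaussian period $\eta_0=\sum_{j\in Q}\omega^j$.
\item For $-i\in N$ we get $\eta_1=\sum_{j\in N}\omega^j$.
\end{itemize}

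The remaining step is to show $\eta_0\eta_1\neq 0$, and I expect this to be the only real computation. First, $\eta_0^q=\eta_0$ since $q\in Q$, so $\eta_0,\eta_1\in\mathbb{F}_q$. Second, $\eta_0+\eta_1=\sum_{j=1}^{n-1}\omega^j=-1$. Third, I will compute $\eta_0\eta_1$ by the classical quadratic Gaussian-period argument. Expand the product as $\sum_{a\in Q,b\in N}\omega^{a+b}$ and count, for each residue class $s$, the number of representations $s=a+b$. These counts are determined by whether $-1$ is a square, that is, by $(-1)^{\frac{n-1}{2}}$. The outcome should be the standard identity $\eta_0\eta_1=\frac{1-n^*}{4}$ with $n^*=(-1)^{\frac{n-1}{2}}n$, valid in characteristic $q\neq2$; equivalently, $\eta_0,\eta_1$ are the roots of $x^2+x+\frac{1-n^*}{4}$. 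An alternative derivation is $(\eta_0-\eta_1)^2=n^*$ via the Gauss sum, followed by $4\eta_0\eta_1=(\eta_0+\eta_1)^2-(\eta_0-\eta_1)^2=1-n^*$. I would use whichever route is cleaner over $\mathbb{F}_q$; it only needs $q$ odd, plus $q\neq n$ so that the division by $n$ and the Gauss-sum identity make sense. Consequently $\eta_0\eta_1\neq0$ exactly when $n^*\not\equiv1\pmod q$, which is the third hypothesis.

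Combining the three cases, every coordinate of $\boldsymbol{c}$ is nonzero, so $\boldsymbol{c}$ is a full-weight codeword of $W_Q$. Lemma \ref{lem:vanishing-full-weight} together with Corollary \ref{cor:prime-multiplier-equivalence} then gives $h_q(n)=0$. The only subtle point is the correct sign and normalisation in the Gaussian-period identity. The hypothesis $q>3$ does not seem essential to this argument beyond $q$ odd; I would double-check whether it is needed for some edge case, such as $q=3$ forcing $\frac{1-n^*}{4}\equiv0$ via a congruence that the stated hypotheses do not already exclude.
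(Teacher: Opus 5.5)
Your proposal is correct and is essentially the paper's proof: you use the same indicator of the quadratic residues as the DFT vector, the same check $c_0=\frac{n-1}{2n}\neq 0$, and the same Gauss-sum evaluation of the Gaussian-period product as $\frac{1-(-1)^{(n-1)/2}n}{4}$. The differences are cosmetic: you obtain the nonresidue code from Corollary \ref{cor:prime-multiplier-equivalence} instead of repeating the construction, and you rightly skip the paper's extra $\pm1$ codeword for $W_1\oplus W_2$, which Lemma \ref{lem:vanishing-full-weight}(iii) makes unnecessary; your suspicion about $q>3$ is also right, since for $q=3$ the hypotheses are incompatible: $\left(\frac{3}{n}\right)=1$ forces $(-1)^{(n-1)/2}n\equiv 1\pmod 3$ by quadratic reciprocity.
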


\begin{proof}
Since \(\operatorname{ord}_n(q)=\dfrac{n-1}{2}\), there exist exactly three cyclotomic cosets \(C_0,C_1,C_2\), where \(C_0=\{0\}\), \(C_1\) is the set of all quadratic residues, and \(C_2\) is the set of all quadratic non-residues. Consequently, \(\mathbb{F}_q^n\) admits the direct sum decomposition \(W_0\oplus W_1\oplus W_2\) with \(\operatorname{supp}(\widehat{W}_i)=C_i\) for \(i=0,1,2\). By Lemma \ref{lem:vanishing-full-weight}, to establish \(h_q(n)=0\), it suffices to verify that each of \(W_1\), \(W_2\) and \(W_1\oplus W_2\) contains at least one full-weight codeword.

We first focus on the subspace \(W_1\). Define
\[\widehat{\boldsymbol{c}}_k=
\begin{cases}
1, & k\in C_1,\\
0, & k\notin C_1.
\end{cases}\]
It is clear that \(\widehat{\boldsymbol{c}}\in \operatorname{DFT}(\mathbb{F}_q^n)\). Let
\(\boldsymbol{c}=\operatorname{IDFT}(\widehat{\boldsymbol{c}}).\)
Then
\[c_i=\frac{1}{n}\sum_{k\in C_1}\omega^{-ik}.\]
When \(i=0\), we have
\(c_0=\frac{|C_1|}{n}=\frac{d}{n}.\)
Therefore, provided that
\(q\nmid d,\)
or equivalently,
\(n\not\equiv 1\pmod{q},\)
we deduce that
\(c_0\neq 0.\)

For \(i \neq 0\), the set \(-iC_1\) is either \(C_1\) or \(C_2\). Let
\[\eta_1 = \sum_{j\in C_1}\omega^j, \qquad \eta_2 = \sum_{j\in C_2}\omega^j.\]
Hence \(c_i\) is equal to either \(\frac{\eta_1}{n}\) or \(\frac{\eta_2}{n}\). They satisfy the relation
\[\eta_1 + \eta_2 = -1.\]
We need to show that both $\eta_1$ and $\eta_2$ are nonzero, which is equivalent to $\eta_1 \eta_2 \neq 0$.

Define the quadratic Gauss sum
\[G(n) = \sum_{k=0}^{n-1} \left(\frac{k}{n}\right) \omega^k,\]
where $\left(\frac{k}{n}\right)$ denotes the Legendre symbol. It follows immediately that
\[G(n) = \eta_1 - \eta_2.\]
Since $G(n)^2 = \left(\frac{-1}{n}\right) n$, we have
\[(\eta_1 - \eta_2)^2 = \left(\frac{-1}{n}\right) n = (-1)^{\frac{n-1}{2}} n.\]
Using the identity $\eta_1 \eta_2 = \frac{(\eta_1 + \eta_2)^2 - (\eta_1 - \eta_2)^2}{4}$ and the fact that $\eta_1 + \eta_2 = -1$, we obtain
\[\eta_1 \eta_2 = \frac{1 - (-1)^{\frac{n-1}{2}} n}{4}.\]
Thus,
\[\eta_1 \eta_2 \neq 0 \quad \Longleftrightarrow \quad \frac{1 - (-1)^{\frac{n-1}{2}} n}{4} \neq 0 \quad \Longleftrightarrow \quad (-1)^{\frac{n-1}{2}} n \not\equiv 1 \pmod{q}.\]
This proves that $W_{1}$ contains a full-weight codeword whenever $n \not\equiv 1 \pmod{q}$ and $(-1)^{\frac{n-1}{2}} n \not\equiv 1 \pmod{q}$. The same construction applies verbatim to $W_{2}$.

On the other hand, $\boldsymbol c \in W_1 \oplus W_2$ if and only if $0 \notin \operatorname{supp}(\widehat{\boldsymbol c})$, i.e., vectors with coordinate sum zero modulo $q$. Since $q$ is an odd prime, $2$ is invertible modulo $q$, so the congruence $2r \equiv n \pmod{q}$ has a unique solution $r \in \{1, 2, \dots, q-1\}$. We construct a vector with $r$ coordinates equal to $1$ and the remaining $n-r$ coordinates equal to $-1$. This vector has all coordinates nonzero and sum of coordinates equal to $r - (n - r) = 2r - n \equiv 0 \pmod{q}$, hence it is a full-weight codeword in $W_{1} \oplus W_{2}$.

Therefore, it follows from Lemma \ref{lem:vanishing-full-weight} that \(h_q(n) = 0\).
\end{proof}

\begin{coro}\label{coro3-1}
Let \(n>5\) be a prime. If \(n \equiv 9 \pmod{20}\) and \(\operatorname{ord}_n(5) = \dfrac{n-1}{2}\), then \(h_5(n) = 0\).
\end{coro}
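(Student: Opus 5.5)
This is a direct specialization of Theorem \ref{Thm3-3} to \(q=5\). The plan is to check its three hypotheses. The hypothesis \(\operatorname{ord}_n(5)=\frac{n-1}{2}\) is assumed outright, and \(n>5\) is prime, as required. So only the two congruence conditions modulo \(5\) remain, and both should follow from \(n\equiv 9\pmod{20}\).

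First, \(n\equiv 9\pmod{20}\) gives \(n\equiv 9\equiv 4\pmod 5\). Hence \(n\not\equiv 1\pmod 5\), which is the second hypothesis.

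Second, \(n\equiv 9\pmod{20}\) gives \(n\equiv 1\pmod 4\), so \(\frac{n-1}{2}\) is even and \((-1)^{\frac{n-1}{2}}n=n\equiv 4\pmod 5\). In particular \((-1)^{\frac{n-1}{2}}n\not\equiv 1\pmod 5\), which is the third hypothesis.

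With all hypotheses of Theorem \ref{Thm3-3} verified for \(q=5\), we conclude \(h_5(n)=0\). There is no real obstacle here; the only point needing care is the sign computation, which depends on reading off \(n\bmod 4\) from \(n\bmod 20\).

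It is also worth noting why this particular residue class is chosen. If \(n\equiv 1\pmod 4\), the third condition reduces to \(n\not\equiv 1\pmod 5\). The order condition requires \(5\) to be a quadratic residue mod \(n\), i.e. \(n\equiv\pm1\pmod 5\) by reciprocity. Together these force \(n\equiv 4\pmod 5\), and combining with \(n\equiv 1\pmod 4\) gives exactly the class \(9\bmod 20\).
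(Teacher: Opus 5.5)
Your proposal is correct and is the paper's argument: the paper also derives this as an immediate specialization of Theorem~\ref{Thm3-3} to \(q=5\), and you make explicit the check that \(n\equiv 9\pmod{20}\) gives \(n\equiv 4\pmod 5\) and \(n\equiv 1\pmod 4\), so both congruence hypotheses hold. Your closing remark, which uses Euler's criterion and reciprocity to explain why \(9 \bmod 20\) is the only admissible class, is also correct.
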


\begin{proof}
This is an immediate corollary of Theorem \ref{Thm3-3}.
\end{proof}

\begin{coro}\label{coro3-2}
Let $q$ be a prime with \(q>3\). Under the Generalized Riemann Hypothesis, there are infinitely many primes $n$ with \(n > q\) for which $q$ fails to be a primitive root modulo $n$ and \(h_q(n) = 0\).
\end{coro}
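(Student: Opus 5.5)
Corollary \ref{coro3-2} will follow from Theorem \ref{Thm3-3} once we produce, under GRH, infinitely many primes $n>q$ with $\operatorname{ord}_n(q)=\frac{n-1}{2}$ that also satisfy the two congruence conditions. Such an $n$ is automatically not one for which $q$ is a primitive root, since $\frac{n-1}{2}<n-1$. So the whole task is an Artin-type existence result for primes of near-primitive order lying in suitable residue classes.

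First I reduce the congruence conditions to a single arithmetic progression. I will impose $n\equiv 1\pmod 4$, so that $(-1)^{\frac{n-1}{2}}n=n$, together with a fixed residue $n\equiv a\pmod q$ where $a\not\equiv 0,1\pmod q$. Since $q>3$, such an $a$ exists; for example $a=2$ or $a=q-1$ works. Then $n\not\equiv1\pmod q$ and $(-1)^{\frac{n-1}{2}}n\equiv a\not\equiv1\pmod q$. By the Chinese remainder theorem, this is the single class $n\equiv b\pmod{4q}$. The choice must also be compatible with $q$ being a quadratic residue modulo $n$, which is forced by $\operatorname{ord}_n(q)\mid\frac{n-1}{2}$. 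By quadratic reciprocity with $n\equiv1\pmod 4$, we have $\left(\frac{q}{n}\right)=\left(\frac{n}{q}\right)=\left(\frac{a}{q}\right)$. So I must choose $a$ to be a nonzero quadratic residue modulo $q$ with $a\ne1$. One exists because there are $\frac{q-1}{2}\ge2$ nonzero residues when $q\ge5$; for instance $a=4$.

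Next I invoke the GRH-conditional results on primes in arithmetic progressions with prescribed index. These are Lenstra's generalization of Hooley's theorem ("On Artin's conjecture and Euclid's algorithm in global fields"), or Moree's work on near-primitive roots in progressions. Under GRH they give a density formula for primes $n\equiv b\pmod f$ with $[\mathbb{F}_n^\times:\langle q\rangle]=2$. This is an inclusion--exclusion over the fields $\mathbb{Q}(\zeta_{2\ell},q^{1/(2\ell)})$ intersected with the condition $\sigma|_{\mathbb{Q}(\zeta_f)}=b$. The key point is that the density is positive. To show this, I need to exhibit one automorphism $\sigma$ of a sufficiently large compositum that satisfies all the local conditions. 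The condition at $2$ is that $\sigma$ fixes $\mathbb{Q}(\sqrt q)$ but not $\mathbb{Q}(\zeta_4,q^{1/4})$ in the appropriate index-$2$ sense. For each odd prime $\ell$, $\sigma$ must not fix $\mathbb{Q}(\zeta_\ell,q^{1/\ell})$. The progression condition fixes the restriction of $\sigma$ to $\mathbb{Q}(\zeta_{4q})$.

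The main obstacle is this compatibility check. The field $\mathbb{Q}(\sqrt q)$ lies inside $\mathbb{Q}(\zeta_{4q})$, so the progression condition already determines whether $\sqrt q$ is fixed. This is exactly the reciprocity computation above, and the choice of $a$ as a residue makes it consistent. I must also ensure the index is exactly $2$ and not $4$. That means that for some admissible $\sigma$, $\sigma$ does not fix $q^{1/4}$ over $\mathbb{Q}(\zeta_4)$, or more precisely that $q$ is not a fourth power modulo $n$ whenever $n\equiv1\pmod4$. Since $\mathbb{Q}(\zeta_8,q^{1/4})$ is not contained in $\mathbb{Q}(\zeta_{4q})\,\mathbb{Q}(\sqrt q)$, Kummer theory lets me choose $\sigma$ freely on $q^{1/4}$. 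The odd-$\ell$ conditions are similarly independent of the cyclotomic data, because $q$ is prime and so not a perfect power. With Moree's explicit positivity criterion, which says the density vanishes only in listed degenerate cases, this yields a positive density. Hence there are infinitely many such primes $n$, and I discard the finitely many with $n\le q$. Applying Theorem \ref{Thm3-3} then gives $h_q(n)=0$ for each remaining $n$.
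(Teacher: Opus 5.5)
Your proposal is correct and follows essentially the same route as the paper. Both restrict to a single class $n\equiv b\pmod{4q}$ with $n\equiv1\pmod 4$ and $n$ congruent modulo $q$ to a quadratic residue $\not\equiv1$, use quadratic reciprocity to check that $\left(\frac{q}{n}\right)=1$ is compatible with that class, invoke the GRH-conditional density results for near-primitive roots in progressions (Moree; Lenstra; Lenstra--Moree--Stevenhagen), and then apply Theorem~\ref{Thm3-3}. Your Chebotarev-style positivity sketch goes beyond the paper, which only cites these results; two slips there: the relevant field at $\ell=2$ is $\mathbb{Q}(\zeta_4,q^{1/4})$, not $\mathbb{Q}(\zeta_8,q^{1/4})$, and the condition at $\ell=q$ is not independent of the progression, though it holds automatically because $b\not\equiv1\pmod q$ forces $\sigma(\zeta_q)\neq\zeta_q$.
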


\begin{proof}
Since \(q>3\), there exists a quadratic residue \(u\in\mathbb{F}_q^\ast\) such that
\[u\not\equiv 1 \pmod q.\]
By the Chinese remainder theorem, choose a residue class \(a \pmod{4q}\) satisfying
\[a\equiv 1 \pmod 4,\qquad a\equiv u \pmod q.\]
We first observe that every prime \(n\equiv a\pmod{4q}\) automatically satisfies the two congruence conditions in Theorem \ref{Thm3-3}. Indeed,
\[n\equiv a\equiv u\not\equiv 1\pmod q,\]
so
\[n\not\equiv 1\pmod q.\]
Moreover, \(n\equiv a\equiv1\pmod4\) implies that the integer \(\frac{n-1}{2}\) is even, hence
\[(-1)^{\frac{n-1}{2}}=1,\]
and consequently
\[(-1)^{\frac{n-1}{2}}n\equiv n\equiv u\not\equiv1\pmod q.\]
It remains to verify the order condition
\[\operatorname{ord}_n(q)=\frac{n-1}{2}.\]
For a prime \(n\), this equality is equivalent to \(q\) generating the subgroup of all quadratic residues modulo \(n\), or equivalently, \(q\) being a near-primitive root of index \(2\) modulo \(n\).

The congruence class \(a\pmod{4q}\) is compatible with this order condition. Indeed, if \(n\equiv a\pmod{4q}\), then \(n\equiv1\pmod4\) and \(n\equiv u\pmod q\). By quadratic reciprocity,
\[\left(\frac{q}{n}\right)=\left(\frac{n}{q}\right)=\left(\frac{u}{q}\right)=1.\]
This shows that \(q\) is a quadratic residue modulo \(n\), so it is possible for \(q\) to generate the subgroup of quadratic residues modulo \(n\).

We now invoke the GRH-conditional Artin-type theorem for near-primitive roots in arithmetic progressions: for the fixed prime \(q>3\) and the congruence class \(a\pmod{4q}\) defined above, assuming the Generalized Riemann Hypothesis holds, there exist infinitely many primes \(n\equiv a\pmod{4q}\) such that \(q\) is a near-primitive root of index \(2\) modulo \(n\), i.e.,
\[\operatorname{ord}_n(q)=\frac{n-1}{2}.\]
This result follows from Moree's density theory for near-primitive roots \cite{Moree2013}, combined with the Artin-type density framework for arithmetic progressions established by Lenstra, Moree, and Stevenhagen \cite[Theorem 3.3, Sections 6--7]{LMS2014}.

Therefore there are infinitely many primes \(n > q\) for which \(q\) fails to be a primitive root modulo \(n\) and \(h_q(n) = 0\).
\end{proof}

\section{Algebraic Interpretation of \(h_q(mn)\ge\max\left\{h_q(m),h_q(n)\right\}\)}
In 2019, Cameron, Ellis and Raynaud \cite{Cameron-Ellis-Raynaud} proved that for any prime power $q$ and positive integers \(m,n\), the inequality
\(h_q(mn)\ge\max\big\{h_q(m),h_q(n)\big\}\)
holds. This is equivalent to \(h_q(n)\ge h_q(m)\) whenever $m$ divides $n$. The above result was established by Cameron, Ellis and Raynaud \cite{Cameron-Ellis-Raynaud} via construction. We now present an algebraic interpretation of this result.

\begin{thm}\label{thm4-1}
Let $q$ be a prime power, and let $n$ be a positive integer with $m$ a divisor of $n$. Then we have
\[h_q(n)\ge h_q(m).\]
\end{thm}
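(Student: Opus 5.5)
We will prove the theorem by pulling back a cyclically covering subspace of $\mathbb{F}_q^m$ to $\mathbb{F}_q^n$ along a $\tau$-equivariant linear surjection. Write $n=m\ell$, and let $\tau_n,\tau_m$ be the cyclic shifts on $\mathbb{F}_q^n$ and $\mathbb{F}_q^m$. Define the folding map
\[
\pi:\mathbb{F}_q^n\to\mathbb{F}_q^m,\qquad \pi(\boldsymbol{x})_r=\sum_{s=0}^{\ell-1}x_{r+sm},\quad r\in[m].
\]
Algebraically, $\pi$ is reduction $\mathbb{F}_q[x]/(x^n-1)\to\mathbb{F}_q[x]/(x^m-1)$. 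This works because $x^m-1\mid x^n-1$, and it needs no coprimality hypothesis, which matters since the theorem allows $\gcd(q,n)>1$. When $\gcd(n,q)=1$ we can also describe $\pi$ via the DFT: with $\omega$ a primitive $n$-th root of unity and $\omega^{\ell}$ a primitive $m$-th root, $\widehat{\pi(\boldsymbol{x})}_k=\widehat{\boldsymbol{x}}_{\ell k}$. In that language $\pi$ restricts the DFT support to the multiples of $\ell$, which are the cyclotomic cosets $C_{\ell k}$ modulo $n$ coming from cosets modulo $m$. This gives the promised ``algebraic interpretation.''

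The key steps are as follows. First, check that $\pi$ is $\mathbb{F}_q$-linear and surjective; the vector $\boldsymbol{y}$ padded by zeros is a preimage of $\boldsymbol{y}$. Second, check the equivariance $\pi\circ\tau_n=\tau_m\circ\pi$. Index by index, $\pi(\tau_n\boldsymbol{x})_r=\sum_s x_{r-1+sm}$, where the index $r-1+sm$ is read modulo $n$; reducing it modulo $m$ gives exactly $\pi(\boldsymbol{x})_{r-1}$. In polynomial language this is simply multiplication by $x$ commuting with reduction. Third, let $U\subseteq\mathbb{F}_q^m$ be cyclically covering with codimension $h_q(m)$, and set $U'=\pi^{-1}(U)$. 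Since $\pi$ is surjective, $\mathbb{F}_q^n/U'\cong\mathbb{F}_q^m/U$, so $\operatorname{codim}U'=h_q(m)$.

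Fourth, we verify covering. Given $\boldsymbol{x}\in\mathbb{F}_q^n$, choose $i\in[m]$ with $\pi(\boldsymbol{x})\in\tau_m^i(U)$. Then
\[
\pi(\tau_n^{-i}\boldsymbol{x})=\tau_m^{-i}\pi(\boldsymbol{x})\in U,
\]
so $\tau_n^{-i}\boldsymbol{x}\in U'$, that is, $\boldsymbol{x}\in\tau_n^i(U')$. Thus $U'$ is cyclically covering, and $h_q(n)\ge h_q(m)$.

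The only real point of care is the equivariance with respect to the shifts under the index wrap-around modulo $n$ versus modulo $m$. This is settled by the polynomial-ring description. For a DFT-flavoured presentation, we will additionally note via Lemma~\ref{Lem02} that $U'=\bigcap_j V_{\boldsymbol{\alpha}'_j}$, where $\boldsymbol{\alpha}'_j$ is the $\ell$-fold periodic repetition of $\boldsymbol{\alpha}_j$. Its DFT support is $\ell\cdot\operatorname{supp}(\widehat{\boldsymbol{\alpha}}_j)$, so Theorem~\ref{Thm2} conditions transfer directly. This DFT description is optional, and we will not rely on it.
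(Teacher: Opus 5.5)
Your proposal is correct and matches the paper's proof: both use the folding map $\pi(\boldsymbol{x})_r=\sum_{s}x_{r+sm}$, check linearity, surjectivity and $\pi\circ\tau_n=\tau_m\circ\pi$, and pull back a maximal cyclically covering $U$ to $\pi^{-1}(U)$, which has the same codimension and is cyclically covering. The polynomial-ring description ($\pi$ as reduction modulo $x^m-1$) and the DFT identity $\widehat{\pi(\boldsymbol{x})}_k=\widehat{\boldsymbol{x}}_{\ell k}$ are valid optional extras that the paper does not use, and the polynomial view handles the index wrap-around more cleanly than the paper's separate treatment of $k=0$.
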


\begin{proof}
Let \(\tau_n\) and \(\tau_m\) denote the cyclic shift operators on \(\mathbb{F}_q^n\) and \(\mathbb{F}_q^m\), respectively. As \(m\mid n\), write \(n = md\). For any \(\boldsymbol{x}=(x_0,...,x_{n-1})\in\mathbb{F}_q^n\), define the map \(\pi: \mathbb{F}_q^n\to\mathbb{F}_q^m\) by
\[\pi(\boldsymbol{x})_k = \sum_{j=0}^{d-1} x_{k+jm}.\]
Take arbitrary \(\boldsymbol{x},\boldsymbol{y}\in\mathbb{F}_q^n\) and \(a,b\in\mathbb{F}_q\). We have
\[\pi(a\boldsymbol{x}+b\boldsymbol{y})_k = \sum_{j=0}^{d-1}\big(ax_{k+jm}+by_{k+jm}\big)= a\sum_{j=0}^{d-1}x_{k+jm} + b\sum_{j=0}^{d-1}y_{k+jm}= a\pi(\boldsymbol{x})_k + b\pi(\boldsymbol{y})_k.\]
This implies \(\pi(a\boldsymbol{x}+b\boldsymbol{y})=a\pi(\boldsymbol{x})+b\pi(\boldsymbol{y})\), so \(\pi\) is a linear map. For any $\boldsymbol{y}=(y_0,y_1,\dots,y_{m-1})\in\mathbb{F}_q^m$, construct $\boldsymbol{x}\in\mathbb{F}_q^n$ as follows:
$$x_i=
\begin{cases}
y_k, & i=k,\ 0\le k\le m-1,\\
0, & \text{otherwise}.
\end{cases}$$
It is easy to verify that $\pi(\boldsymbol{x})=\boldsymbol{y}$. Hence $\pi$ is surjective.

We now prove 
\[\pi \circ \tau_n = \tau_m \circ \pi,\]
which means the following diagram commutes.
\[\begin{tikzcd}
\mathbb{F}_q^{n} \arrow[r, "\pi"] \arrow[d, "\tau_{n}"']
& \mathbb{F}_q^m  \arrow[d, "\tau_m"'] \\
\mathbb{F}_q^{n} \arrow[r, "\pi"]
& \mathbb{F}_q^m 
\end{tikzcd}\]
It suffices to prove that for any \(\boldsymbol{x}\in\mathbb{F}_q^n\) and \(k\in[m]\), the equality
\[\big(\pi(\tau_n(\boldsymbol{x}))\big)_k = \big(\tau_m(\pi(\boldsymbol{x}))\big)_k\]
holds. First, we compute
\[\big(\pi(\tau_n(\boldsymbol{x}))\big)_k = \sum_{j=0}^{d-1} \big(\tau_n(\boldsymbol{x})\big)_{k+jm} = \sum_{j=0}^{d-1} x_{(k+jm-1)\bmod n}.\]
Similarly,
\[\big(\tau_m(\pi(\boldsymbol{x}))\big)_k = \pi(\boldsymbol{x})_{(k-1)\bmod m} = \sum_{j=0}^{d-1} x_{((k-1)\bmod m) + jm}.\]
When \(k \neq 0\), since
\[0 \leq k + jm - 1 \leq m - 2 + (d-1)m = n - 2,\]
it follows that
\[k + jm - 1 \pmod{n} = k + jm - 1.\]
Therefore, in this case,  
\[\{k + jm - 1 \pmod{n} : j = 0, \dots, d-1\} = \{((k-1) \bmod m) + jm : j = 0, \dots, d-1\}.\]  
When \(k = 0\), the same conclusion holds. Hence,  
\[\big(\pi(\tau_n(\boldsymbol{x}))\big)_k = \big(\tau_m(\pi(\boldsymbol{x}))\big)_k,\]  
and consequently,  
\[\pi \circ \tau_n = \tau_m \circ \pi.\]

Let $U$ be an arbitrary cyclically covering subspace of \(\mathbb{F}_q^m\). Define the set
\[V = \pi^{-1}(U) = \big\{\boldsymbol{x}\in\mathbb{F}_q^n : \pi(\boldsymbol{x})\in U\big\}.\]
We first verify that $V$ is a subspace of \(\mathbb{F}_q^n\). It follows that
\[\begin{aligned}
&\bigcup_{i=0}^{n-1}\tau_n^i(V)=\bigcup_{i=0}^{n-1}\tau_n^i\big(\pi^{-1}(U)\big)=\bigcup_{i=0}^{n-1}\pi^{-1}\big(\tau_m^i(U)\big) \\
=&~\pi^{-1}\left(\bigcup_{i=0}^{n-1}\tau_m^i(U)\right)=\pi^{-1}\left(\bigcup_{i=0}^{m-1}\tau_m^i(U)\right)=\pi^{-1}(\mathbb{F}_q^m)=\mathbb{F}_q^n.
\end{aligned}\]
Hence $V$ is a cyclically covering subspace of \(\mathbb{F}_q^n\).

Since $\pi$ is surjective, it follows from the dimension formula that
$$\dim\mathbb{F}_q^n = \dim\ker\pi + \dim\mathbb{F}_q^m,$$
which yields $\dim\ker\pi = n-m$.
Consider the restriction $\pi|_V: V\to U$ of $\pi$ to $V$. This map is surjective: for any $\boldsymbol{y}\in U$, the surjectivity of $\pi$ guarantees the existence of $\boldsymbol{x}\in\mathbb{F}_q^n$ such that $\pi(\boldsymbol{x})=\boldsymbol{y}$, and hence $\boldsymbol{x}\in V$ by definition.
Moreover, $\ker(\pi|_V) = \ker\pi$, since $\ker\pi = \pi^{-1}(\{\boldsymbol{0}\}) \subseteq \pi^{-1}(U) = V$.

Applying the dimension formula again, we have
$$\dim V = \dim\ker\pi + \dim U = n-m+\dim U.$$
We then compute the codimension
\[\operatorname{codim} V = n - \dim V = n - \big(n-m+\dim U\big) = m - \dim U = \operatorname{codim} U.\]
This implies that there exists at least one cyclically covering subspace of $\mathbb{F}_q^n$ whose codimension equals $h_q(m)$. Consequently, we obtain
$$h_q(n) \ge h_q(m).$$
\end{proof}

\section{{Tensor-Product Methods for Cyclically Covering Subspaces}}\label{sec:tensor-products}
\subsection{{The superadditivity inequality and sharpness}}\label{subsec:superadditivity}
In 2019, Aaronson, Groenland, and Johnston \cite{Aaronson-Groenland-Johnston} proved that $h_q(mn)\ge h_q(m)+h_q(n)$ holds for all positive integers $m,n$. Using Theorem \ref{Thm2} and the Discrete Fourier Transform, we provide an algebraic interpretation of $h_q(mn) \ge h_q(m) + h_q(n)$ in the coprime case $\gcd(m, n) = 1$ and $\gcd(mn, q) = 1$.
\begin{thm}\label{Thm5.1}
Let $q$ be a prime power, and let $m,n$ be positive integers with $\gcd(mn,q)=1$. If $\gcd(m,n)=1$, then
\[h_q(mn)\ge h_q(m)+h_q(n).\]
\end{thm}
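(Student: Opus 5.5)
The plan is a tensor-product construction. Let $U\subseteq\mathbb{F}_q^m$ and $U'\subseteq\mathbb{F}_q^n$ be cyclically covering subspaces of codimensions $a=h_q(m)$ and $b=h_q(n)$. Since $\gcd(m,n)=1$, the Chinese remainder theorem gives a bijection $\mathbb{Z}/mn\mathbb{Z}\to\mathbb{Z}/m\mathbb{Z}\times\mathbb{Z}/n\mathbb{Z}$. Through it we identify $\mathbb{F}_q^{mn}$ with $\mathbb{F}_q^m\otimes\mathbb{F}_q^n$, i.e.\ with $m\times n$ arrays $X=(x_{s,t})$, where the coordinate $i\in[mn]$ sits at position $(i\bmod m,\, i\bmod n)$. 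Under this identification the shift $\tau_{mn}$ becomes $\tau_m\otimes\tau_n$: it shifts rows and columns simultaneously. Moreover, as $i$ runs over $[mn]$, the pair $(i\bmod m,\, i\bmod n)$ runs over all of $[m]\times[n]$, so $\tau_{mn}^i=\tau_m^{i}\otimes\tau_n^{i}$ realizes every pair $\tau_m^{j}\otimes\tau_n^{k}$.

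Write $U=\bigcap_{j=1}^a V_{\boldsymbol{\alpha}_j}$ and $U'=\bigcap_{l=1}^b V_{\boldsymbol{\beta}_l}$. The candidate subspace is
\[
W=\{X:\ \boldsymbol{\alpha}_j^{T}X\boldsymbol{\beta}=0\ \forall j,\ \forall \boldsymbol{\beta}\in\mathbb{F}_q^n\}\ \cap\ \{X:\ \boldsymbol{\gamma}^{T}X\boldsymbol{\beta}_l=0\ \forall l,\ \forall\boldsymbol{\gamma}\}\ \cap\ \dots,
\]
but this is too big a codimension. Instead I would build $W$ column- and row-wise so that its codimension is exactly $a+b$. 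The natural choice uses the all-ones-type functionals from Theorem~\ref{thm4-1}: let $\pi_1:\mathbb{F}_q^{mn}\to\mathbb{F}_q^m$ sum each row over $t$, and $\pi_2:\mathbb{F}_q^{mn}\to\mathbb{F}_q^n$ sum each column over $s$. Set
\[
W=\pi_1^{-1}(U)\cap\pi_2^{-1}(U').
\]
One checks $\pi_1\circ\tau_{mn}=\tau_m\circ\pi_1$ and $\pi_2\circ\tau_{mn}=\tau_n\circ\pi_2$, as in the proof of Theorem~\ref{thm4-1}.

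\textbf{Codimension.} The map $(\pi_1,\pi_2)$ has image $\{(\boldsymbol{y},\boldsymbol{z}):\sum y_s=\sum z_t\}$, of codimension $1$ in $\mathbb{F}_q^m\times\mathbb{F}_q^n$. Hence $\operatorname{codim}W=a+b$ provided the constraints on the total sum are not double-counted. This is where condition (1) of Theorem~\ref{Thm1}/Theorem~\ref{Thm2} enters. A cyclically covering $U$ with $a\ge1$ must satisfy $0\notin\operatorname{supp}(\widehat{\boldsymbol{\alpha}}_j)$, i.e.\ each $\boldsymbol{\alpha}_j$ has coordinate sum zero. So the functionals $\boldsymbol{\alpha}_j\circ\pi_1$ and $\boldsymbol{\beta}_l\circ\pi_2$ are orthogonal to the all-ones direction. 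Then $\{\boldsymbol{\alpha}_j\otimes\mathbf 1\}\cup\{\mathbf 1\otimes\boldsymbol{\beta}_l\}$ is linearly independent: their DFT supports lie in $\{(u,0):u\ne0\}$ and $\{(0,v):v\ne0\}$ respectively, which are disjoint. (The cases $a=0$ or $b=0$ are trivial.)

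\textbf{Covering.} Given $X$, take $\boldsymbol{y}=\pi_1(X)$ and $\boldsymbol{z}=\pi_2(X)$. Choose $j$ with $\tau_m^{-j}\boldsymbol{y}\in U$ and $k$ with $\tau_n^{-k}\boldsymbol{z}\in U'$. By CRT pick $i$ with $i\equiv j\pmod m$ and $i\equiv k\pmod n$. Then $\tau_{mn}^{-i}X\in W$. The independent choice of the two shifts is exactly what coprimality buys.

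The step I expect to be the main obstacle is the independence and codimension count. Specifically, one must ensure that $\pi_1$ and $\pi_2$ interact only through the total sum, and that the defining functionals of $U,U'$ kill that sum. In the paper's language this is naturally phrased via Theorem~\ref{Thm2}: the DFT on $\mathbb{Z}/mn\cong\mathbb{Z}/m\times\mathbb{Z}/n$ factors, with $\omega_{mn}$ corresponding to $(\omega_m,\omega_n)$, so the supports become products of cyclotomic cosets. I would verify the independence directly from those supports.
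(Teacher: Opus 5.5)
Your argument is correct, and your subspace is exactly the paper's. Since $(\boldsymbol{x},\rho_2^{-1}(\boldsymbol{\alpha}_j\otimes\boldsymbol{1}_n))=(\pi_1(\boldsymbol{x}),\boldsymbol{\alpha}_j)$, the paper's $W=\bigcap_j V_{\boldsymbol{\gamma}_j}\cap\bigcap_l V_{\boldsymbol{\delta}_l}$ coincides with your $\pi_1^{-1}(U)\cap\pi_2^{-1}(U')$. Your codimension count, via the disjoint DFT supports $\{an\}$ and $\{bm\}$, is also the paper's. You should add the one-line remark that $\boldsymbol{\alpha}\mapsto\boldsymbol{\alpha}\otimes\boldsymbol{1}_n$ is injective, so independence within each family is inherited.

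The real difference is the covering step. The paper argues by contradiction through Theorem~\ref{Thm2}. From a non-covered vector it extracts DFT-side witnesses $\boldsymbol{c}_{\gamma_i},\boldsymbol{c}_{\delta_j}$, restricts them to $\boldsymbol{u}_i,\boldsymbol{v}_j$ over $\mathbb{F}_q^m$ and $\mathbb{F}_q^n$, and re-verifies the common-vector condition there. It then factors $P_{\boldsymbol{c}_{\gamma_i}}(t)=P_{\boldsymbol{u}_i}(t^n)$, and only at the end glues the common zeros $a_0,b_0$ by CRT. You instead notice that $\pi_1,\pi_2$ are the folding maps of Theorem~\ref{thm4-1}, which intertwine $\tau_{mn}$ with $\tau_m$ and $\tau_n$. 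The two shifts can therefore be glued by CRT directly, in a few lines and with no Fourier analysis.

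Your route also shows what the hypotheses are really for. Covering uses only $\gcd(m,n)=1$. The codimension count can likewise be done without the DFT. A relation $\boldsymbol{\alpha}\otimes\boldsymbol{1}_n+\boldsymbol{1}_m\otimes\boldsymbol{\beta}=\boldsymbol{0}$ with $\boldsymbol{\alpha}\in U^{\perp}$ and $\boldsymbol{\beta}\in (U')^{\perp}$ forces $\boldsymbol{\alpha}=c\boldsymbol{1}_m$. If $c\neq0$, then $U$ would lie in the proper shift-invariant hyperplane $\boldsymbol{1}_m^{\perp}$, contradicting covering. Similarly, your claim that every $\boldsymbol{\alpha}_j$ has coordinate sum zero follows at once from $\boldsymbol{1}_m\in U$, because $\boldsymbol{1}_m$ is shift-invariant and lies in some $\tau_m^iU$.

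So your route proves the inequality without assuming $\gcd(mn,q)=1$. The paper's longer route is chosen to showcase Theorem~\ref{Thm2} and the factorization of the DFT over $\mathbb{Z}_{mn}\cong\mathbb{Z}_m\times\mathbb{Z}_n$, which is its stated purpose of giving an algebraic interpretation.
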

The proof of Theorem \ref{Thm5.1} relies on tensor product decompositions, so we first establish some preliminary results. {Throughout the preliminary results and proof of Theorem \ref{Thm5.1}, we assume that \(\gcd(mn,q)=1\) and \(\gcd(m,n)=1\).}

Since $\gcd(m,n)=1$, the Chinese Remainder Theorem gives the following additive group isomorphism
$$\begin{aligned}
\phi:\mathbb Z_{mn}&\to \mathbb Z_m\times\mathbb Z_n\\
k&\mapsto \bigl(k\bmod{m},\ k\bmod{n}\bigr).
\end{aligned}$$
For the inverse mapping: there exist integers $s,t$ satisfying $sm+tn=1$, and
$$\phi^{-1}(a,b)=a\cdot n\cdot t+b\cdot m\cdot s\pmod{mn}.$$
Let $\operatorname{Mat}_{m\times n}(\mathbb{F}_q)$ denote the space of all $m\times n$ matrices over the finite field $\mathbb{F}_q$, and let $\mathbb{F}_q^m \otimes \mathbb{F}_q^n$ denote the tensor product of $\mathbb{F}_q^m$ and $\mathbb{F}_q^n$.

\begin{lem}\label{Lem5.1}
Let $q$ be a prime power. For any positive integers $m$ and $n$, we have
\[\mathbb{F}_q^{mn} \cong \mathbb{F}_q^m \otimes \mathbb{F}_q^n \cong \operatorname{Mat}_{m\times n}(\mathbb{F}_q).\]
\end{lem}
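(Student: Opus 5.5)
The plan is to exhibit explicit linear isomorphisms by matching standard bases and comparing dimensions; no hypothesis on $m,n$ or $q$ is needed.

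\textbf{Step 1: dimensions.} All three spaces have dimension $mn$ over $\mathbb{F}_q$:
\begin{itemize}
\item $\dim \mathbb{F}_q^{mn}=mn$ by definition;
\item $\dim \operatorname{Mat}_{m\times n}(\mathbb{F}_q)=mn$, with basis the matrix units $E_{ab}$ for $a\in[m]$, $b\in[n]$;
\item $\dim(\mathbb{F}_q^m\otimes\mathbb{F}_q^n)=mn$, since the elementary tensors $\boldsymbol{e}_a\otimes\boldsymbol{e}_b$ form a basis.
\end{itemize}
For the last point I will quote the standard fact that tensor products of free modules with bases $\{u_a\}$ and $\{v_b\}$ are free with basis $\{u_a\otimes v_b\}$. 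Over a field, finite-dimensional spaces of equal dimension are isomorphic, so the statement already follows at this stage.

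\textbf{Step 2: concrete isomorphisms.} For later use I will still write the maps down explicitly.
\begin{itemize}
\item Define $\Psi:\mathbb{F}_q^m\otimes\mathbb{F}_q^n\to\operatorname{Mat}_{m\times n}(\mathbb{F}_q)$ on pure tensors by $\boldsymbol{u}\otimes\boldsymbol{v}\mapsto \boldsymbol{u}\boldsymbol{v}^{T}$. This map is well defined by the universal property of the tensor product, because $(\boldsymbol{u},\boldsymbol{v})\mapsto\boldsymbol{u}\boldsymbol{v}^T$ is bilinear.
\item $\Psi$ sends the basis $\boldsymbol{e}_a\otimes\boldsymbol{e}_b$ to the basis $E_{ab}$, so it is an isomorphism.
\item Define $\Theta:\mathbb{F}_q^{mn}\to\operatorname{Mat}_{m\times n}(\mathbb{F}_q)$ by $\boldsymbol{x}\mapsto (X_{ab})$, a bijective reindexing of the coordinates.
\end{itemize}

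\textbf{Step 3: choice of indexing for the later argument.} Since the paper will apply this to cyclic shifts, I will index $\Theta$ through the CRT isomorphism $\phi$ just introduced, setting $X_{ab}=x_{\phi^{-1}(a,b)}$. This is a bijection $[mn]\to[m]\times[n]$ for any indexing choice, so linearity and bijectivity are immediate.

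I would also record (without needing it for the lemma) that when $\gcd(m,n)=1$ this identification intertwines $\tau_{mn}$ with $\tau_m\otimes\tau_n$. Indeed, $k\mapsto k+1$ corresponds to $(a,b)\mapsto(a+1,b+1)$ under $\phi$, which is what makes the lemma useful.

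\textbf{Main obstacle.} There is essentially none; the only point needing care is the basis statement for the tensor product. If the reader prefers, I can take that as the definition or construction of $\mathbb{F}_q^m\otimes\mathbb{F}_q^n$.
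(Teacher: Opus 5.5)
Your proposal is correct and takes essentially the paper's approach: the paper also builds basis-to-basis isomorphisms, namely $\rho_1\colon \boldsymbol{e}_i\otimes\boldsymbol{f}_j\mapsto E_{i,j}$ (equivalently $\boldsymbol{u}\otimes\boldsymbol{v}\mapsto\boldsymbol{u}\boldsymbol{v}^T$) and $\rho_2\colon \boldsymbol{g}_k\mapsto\boldsymbol{e}_a\otimes\boldsymbol{f}_b$ whenever $\phi(k)=(a,b)$, with the CRT map $\phi$ doing the indexing. One caution: $\phi$ is a bijection only when $\gcd(m,n)=1$, which is the standing assumption in that section of the paper, so for arbitrary $m,n$ as in the statement the proof rests on your Step~1 dimension count (or on a non-CRT reindexing such as row-major order), not on the CRT-indexed map $\Theta$ of Step~3.
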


\begin{proof} 
We first prove that $\mathbb{F}_q^m \otimes \mathbb{F}_q^n \cong \operatorname{Mat}_{m\times n}(\mathbb{F}_q)$. Let $\{\boldsymbol{e}_i\}$ and $\{\boldsymbol{f}_j\}$ be the standard bases of $\mathbb{F}_q^m$ and $\mathbb{F}_q^n$, respectively. Then, the set
\[\{\boldsymbol{e}_i \otimes \boldsymbol{f}_j : 0 \leq i \leq m-1, 0 \leq j \leq n-1\}\]
forms a basis for $\mathbb{F}_q^m \otimes \mathbb{F}_q^n$. Let $E_{i,j} \in \operatorname{Mat}_{m\times n}(\mathbb{F}_q)$ denote the matrix with a $1$ in the $(i,j)$ position and zeros elsewhere; then, the set
\[\{E_{i,j} : 0 \leq i \leq m-1, 0 \leq j \leq n-1\}\]
forms a basis for $\operatorname{Mat}_{m\times n}(\mathbb{F}_q)$. Define a linear map 
\[\rho_1: \mathbb{F}_q^m \otimes \mathbb{F}_q^n \to \operatorname{Mat}_{m\times n}(\mathbb{F}_q),\quad \boldsymbol{e}_i \otimes \boldsymbol{f}_j \mapsto E_{i,j},\] 
and extend it linearly to the entire space. It is easy to verify that this is an isomorphism.
	
Let $\{\boldsymbol{g}_k : 0 \le k \le mn-1\}$ be a standard basis for $\mathbb{F}_q^{mn}$. Let $\rho_2 : \mathbb{F}_q^{mn} \longrightarrow \mathbb{F}_q^m \otimes \mathbb{F}_q^n$ be a linear isomorphism defined on the standard basis as
\[\rho_2(\boldsymbol{g}_k) = \boldsymbol{e}_a \otimes \boldsymbol{f}_b, \quad \text{if and only if }~~ \phi(k) = (a,b),\]
and extend it linearly to the entire space. Here, $\phi$ is the bijection $\mathbb{Z}_{mn} \to \mathbb{Z}_m \times \mathbb{Z}_n$ given by the CRT. It is easy to verify that $\rho_2$ is a linear isomorphism.
\end{proof}

\begin{rem}\label{rem5.1}
Define $\Psi : \mathbb{F}_q^{mn} \longrightarrow \operatorname{Mat}_{m \times n}(\mathbb{F}_q)$ as $\Psi = \rho_1 \circ \rho_2$. Let $X = \Psi(\boldsymbol{x})$, where $\boldsymbol{x} \in \mathbb{F}_q^{mn}$. Then
\[X_{a,b} = x_k \iff \phi(k) = (a,b).\]
The proof is given below. Since \(\Psi = \rho_1 \circ \rho_2\), for each basis vector \(\boldsymbol{g}_k\), we have
\[\Psi(\boldsymbol{g}_k) = \rho_1\big(\rho_2(\boldsymbol{g}_k)\big) = \rho_1(\boldsymbol{e}_a \otimes \boldsymbol{f}_b) = E_{a,b}.\]
Any vector \(\boldsymbol{x}\in\mathbb{F}_q^{mn}\) admits a unique linear expansion in terms of the standard basis
\[\boldsymbol{x} = \sum_{k=0}^{mn-1} x_k \boldsymbol{g}_k.\]
By linearity of \(\Psi\),
\[X=\Psi(\boldsymbol{x}) = \Psi\left(\sum_{k=0}^{mn-1} x_k \boldsymbol{g}_k\right)= \sum_{k=0}^{mn-1} x_k \cdot \Psi(\boldsymbol{g}_k)= \sum_{k=0}^{mn-1} x_k \cdot E_{a_k,b_k},\]
where \((a_k,b_k) = \phi(k)\). Consequently,
\[X_{a,b} = x_k \iff \phi(k) = (a,b).\]
\end{rem}

\begin{exmp}\label{exmp5.1}
(1) Let $\boldsymbol{u} \in \mathbb{F}_q^m$, $\boldsymbol{v} \in \mathbb{F}_q^n$. We have
\[\boldsymbol{u} = \sum_{i=0}^{m-1} u_i \boldsymbol{e}_i, \quad \boldsymbol{v} = \sum_{j=0}^{n-1} v_j \boldsymbol{f}_j.\]
Then
\[\boldsymbol{u} \otimes \boldsymbol{v} = \left(\sum_{i=0}^{m-1} u_i \boldsymbol{e}_i\right) \otimes \left(\sum_{j=0}^{n-1} v_j \boldsymbol{f}_j\right) = \sum_{i=0}^{m-1} \sum_{j=0}^{n-1} u_i v_j \cdot (\boldsymbol{e}_i \otimes \boldsymbol{f}_j).\]
Thus
\[\rho_1(\boldsymbol{u} \otimes \boldsymbol{v}) = \sum_{i=0}^{m-1} \sum_{j=0}^{n-1} u_i v_j \cdot E_{i,j} = \boldsymbol{u}\boldsymbol{v}^T.\]

(2) Let $m=2$ and $n=3$. The one-dimensional vector $\boldsymbol{x}=(x_0,x_1,x_2,x_3,x_4,x_5)$ is reshaped into the matrix
\[X= \Psi(\boldsymbol{x}) = \begin{pmatrix} 
x_0 & x_4 & x_2 \\ 
x_3 & x_1 & x_5 
\end{pmatrix}.\]
Note that this is different from the standard row-major or column-major order.
\end{exmp}

Let \(\operatorname{End}(V) = \operatorname{Hom}(V,V)\) denote the endomorphism ring of a vector space $V$. There exists a canonical ring isomorphism
\[\theta_1\colon \operatorname{End}(\mathbb{F}_q^m) \xrightarrow{\cong} \operatorname{Mat}_m(\mathbb{F}_q),\quad \mathcal{A}\mapsto A,\]
where \(\mathcal{A}\) stands for a linear operator and $A$ is its matrix representation with respect to the standard basis. Similarly, we have the canonical ring isomorphism
\[\theta_2\colon \operatorname{End}(\mathbb{F}_q^n) \xrightarrow{\cong} \operatorname{Mat}_n(\mathbb{F}_q),\quad \mathcal{B}\mapsto B.\]

Below we give the definition of the tensor product of operators.
Let \( \mathcal{A} : \mathbb{F}_q^m \to \mathbb{F}_q^m \) and \( \mathcal{B} : \mathbb{F}_q^n \to \mathbb{F}_q^n \) be two linear operators.
Their tensor product is naturally defined on \( \mathbb{F}_q^m \otimes \mathbb{F}_q^n \), specifically as follows
\[(\mathcal{A} \otimes \mathcal{B})(\boldsymbol{u} \otimes \boldsymbol{v}) = (\mathcal{A} \boldsymbol{u}) \otimes (\mathcal{B} \boldsymbol{v}),~~\boldsymbol{u}\in\mathbb{F}_q^m, \boldsymbol{v}\in \mathbb{F}_q^n,\]
and extended linearly to the whole space.

Let \(A\in\operatorname{Mat}_m(\mathbb{F}_q),\,B\in\operatorname{Mat}_n(\mathbb{F}_q)\). The matrices \(A,B\) induce a linear transformation \(A\odot B\) on \(\operatorname{Mat}_{m\times n}(\mathbb{F}_q)\) defined by
\[A\odot B\colon \operatorname{Mat}_{m\times n}(\mathbb{F}_q)\to \operatorname{Mat}_{m\times n}(\mathbb{F}_q),\quad X\mapsto AXB^{T}.\]

\begin{lem}\label{Lem5.2}
Let \(\tau_m\), \(\tau_n\) be the cyclic shift operators in \(\mathbb{F}_q^m\), \(\mathbb{F}_q^n\), respectively. Set \(T_m=\theta_1(\tau_m),\ T_n=\theta_2(\tau_n)\). The following commutative diagram commutes.
\[\begin{tikzcd}
\mathbb{F}_q^m \otimes \mathbb{F}_q^n
\arrow[r, "\rho_1", "\cong"']
\arrow[d, "\tau_m\otimes\tau_n"']
& \operatorname{Mat}_{m\times n}(\mathbb{F}_q)
\arrow[d, "T_m\odot T_n"'] \\
\mathbb{F}_q^m \otimes \mathbb{F}_q^n
\arrow[r, "\rho_1", "\cong"']
& \operatorname{Mat}_{m\times n}(\mathbb{F}_q)
\end{tikzcd}\]
\end{lem}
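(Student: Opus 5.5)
The claim is that $\rho_1\circ(\tau_m\otimes\tau_n)=(T_m\odot T_n)\circ\rho_1$ as linear maps $\mathbb{F}_q^m\otimes\mathbb{F}_q^n\to\operatorname{Mat}_{m\times n}(\mathbb{F}_q)$. Every map in the diagram is $\mathbb{F}_q$-linear: $\rho_1$ by construction, $\tau_m\otimes\tau_n$ by its definition via linear extension, and $X\mapsto T_mXT_n^T$ by bilinearity of matrix multiplication. So it suffices to check the identity on a spanning set. I will check it on pure tensors $\boldsymbol{u}\otimes\boldsymbol{v}$; in fact the basis vectors $\boldsymbol{e}_i\otimes\boldsymbol{f}_j$ alone would suffice.

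The key ingredient is Example \ref{exmp5.1}(1), which gives $\rho_1(\boldsymbol{u}\otimes\boldsymbol{v})=\boldsymbol{u}\boldsymbol{v}^T$. By the definition of $\theta_1,\theta_2$, the matrices $T_m,T_n$ represent $\tau_m,\tau_n$ in the standard bases, so $\tau_m\boldsymbol{u}=T_m\boldsymbol{u}$ and $\tau_n\boldsymbol{v}=T_n\boldsymbol{v}$ as column vectors. Going along the left-then-bottom path gives
\[
\rho_1\bigl((\tau_m\otimes\tau_n)(\boldsymbol{u}\otimes\boldsymbol{v})\bigr)=\rho_1(T_m\boldsymbol{u}\otimes T_n\boldsymbol{v})=(T_m\boldsymbol{u})(T_n\boldsymbol{v})^T=T_m\,\boldsymbol{u}\boldsymbol{v}^T\,T_n^T .
\]
Going along the top-then-right path gives
\[
(T_m\odot T_n)\bigl(\rho_1(\boldsymbol{u}\otimes\boldsymbol{v})\bigr)=T_m(\boldsymbol{u}\boldsymbol{v}^T)T_n^T .
\]
These agree, so the two composites coincide on pure tensors. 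Since pure tensors span $\mathbb{F}_q^m\otimes\mathbb{F}_q^n$ and both composites are linear, they coincide everywhere, and the diagram commutes.

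For concreteness, the basis check reads as follows. We have $\tau_m\boldsymbol{e}_i=\boldsymbol{e}_{i+1}$ and $\tau_n\boldsymbol{f}_j=\boldsymbol{f}_{j+1}$, so the left path sends $E_{i,j}$ to $E_{i+1,j+1}$. On the other side, $T_mE_{i,j}T_n^T=(T_m\boldsymbol{e}_i)(T_n\boldsymbol{f}_j)^T=E_{i+1,j+1}$, with indices taken mod $m$ and mod $n$ respectively.

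The only step needing care is the convention that $\boldsymbol{u}\otimes\boldsymbol{v}$ corresponds to $\boldsymbol{u}\boldsymbol{v}^T$ and not to $\boldsymbol{v}\boldsymbol{u}^T$. This convention is what forces the transpose on $T_n$ in the definition of $A\odot B$, and it is fixed by Example \ref{exmp5.1}(1). There is no genuine obstacle beyond this bookkeeping.
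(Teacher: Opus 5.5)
Your proposal is correct and follows essentially the same route as the paper. Both check $\rho_1\circ(\tau_m\otimes\tau_n)=(T_m\odot T_n)\circ\rho_1$ on generators via $T_m\boldsymbol{e}_i\boldsymbol{f}_j^{T}T_n^{T}=(T_m\boldsymbol{e}_i)(T_n\boldsymbol{f}_j)^{T}=E_{(i+1)\bmod m,\,(j+1)\bmod n}$; the paper does this only on the basis $\boldsymbol{e}_i\otimes\boldsymbol{f}_j$, while your pure-tensor version using $\rho_1(\boldsymbol{u}\otimes\boldsymbol{v})=\boldsymbol{u}\boldsymbol{v}^{T}$ contains that as a special case.
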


\begin{proof}
It suffices to verify the equality on arbitrary basis element \(\boldsymbol{e}_i\otimes \boldsymbol{f}_j\). On the one hand, we have
\[\begin{aligned}
&\rho_1\circ (\tau_m\otimes \tau_n)(\boldsymbol{e}_i\otimes \boldsymbol{f}_j)=\rho_1\big((\tau_m \boldsymbol{e}_i)\otimes(\tau_n \boldsymbol{f}_j)\big)\\
&=\rho_1\big(\boldsymbol{e}_{(i+1)\bmod m}\otimes \boldsymbol{f}_{(j+1)\bmod n}\big)=E_{(i+1)\bmod m,\,(j+1)\bmod n}.
\end{aligned}\]
On the other hand,
\[\begin{aligned}
&(T_m\odot T_n)\circ \rho_1(\boldsymbol{e}_i\otimes \boldsymbol{f}_j)=(T_m\odot T_n)(E_{i,j})=T_m E_{i,j}T_n^{T}\\
&=T_m \boldsymbol{e}_i \boldsymbol{f}_j^{T}T_n^{T}=(T_m \boldsymbol{e}_i)(T_n \boldsymbol{f}_j)^{T}=\boldsymbol{e}_{(i+1)\bmod m}\cdot\boldsymbol{f}_{(j+1)\bmod n}^{T}\\
&=E_{(i+1)\bmod m,\,(j+1)\bmod n}.
\end{aligned}\]
Since the two mappings agree on all basis vectors \(e_i\otimes f_j\), we obtain
\[\rho_1\circ (\tau_m\otimes \tau_n)=(T_m\odot T_n)\circ \rho_1.\]
\end{proof}

\begin{lem}\label{Lem5.3}
Let \(\tau_{mn}\), \(\tau_m\), \(\tau_n\) be the cyclic shift operators in \(\mathbb{F}_q^{mn}\), \(\mathbb{F}_q^m\), \(\mathbb{F}_q^n\), respectively. The following commutative diagram commutes.
\[\begin{tikzcd}
\mathbb{F}_q^{mn} \arrow[r, "\rho_2", "\cong"'] \arrow[d, "\tau_{mn}"']
& \mathbb{F}_q^m \otimes \mathbb{F}_q^n \arrow[d, "\tau_m\otimes\tau_n"'] \\
\mathbb{F}_q^{mn} \arrow[r, "\rho_2", "\cong"']
& \mathbb{F}_q^m \otimes \mathbb{F}_q^n
\end{tikzcd}\]
\end{lem}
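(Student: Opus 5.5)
Since all three maps are linear, it is enough to check that $\rho_2\circ\tau_{mn}$ and $(\tau_m\otimes\tau_n)\circ\rho_2$ agree on the standard basis $\{\boldsymbol{g}_k : 0\le k\le mn-1\}$ of $\mathbb{F}_q^{mn}$. This is the same strategy as in Lemma \ref{Lem5.2}.

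Fix $k$ and write $\phi(k)=(a,b)$, so $a\equiv k\pmod m$ and $b\equiv k\pmod n$. The shift acts by $\tau_{mn}(\boldsymbol{g}_k)=\boldsymbol{g}_{(k+1)\bmod mn}$, hence the left-hand composite gives
\[\rho_2(\tau_{mn}(\boldsymbol{g}_k))=\rho_2(\boldsymbol{g}_{(k+1)\bmod mn})=\boldsymbol{e}_{a'}\otimes\boldsymbol{f}_{b'},\qquad (a',b')=\phi\big((k+1)\bmod mn\big).\]
The right-hand composite gives
\[(\tau_m\otimes\tau_n)(\rho_2(\boldsymbol{g}_k))=(\tau_m\otimes\tau_n)(\boldsymbol{e}_a\otimes\boldsymbol{f}_b)=\boldsymbol{e}_{(a+1)\bmod m}\otimes\boldsymbol{f}_{(b+1)\bmod n},\]
using the definition of the tensor product of operators.

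It therefore remains to show $\phi(k+1)=\big((a+1)\bmod m,\,(b+1)\bmod n\big)$, that is, $\phi(k+1)=\phi(k)+\phi(1)$ with $\phi(1)=(1,1)$. This holds because $\phi$ is an additive group isomorphism $\mathbb{Z}_{mn}\to\mathbb{Z}_m\times\mathbb{Z}_n$. Concretely, since $m\mid mn$ and $n\mid mn$, reducing $k+1$ modulo $mn$ first and then modulo $m$ (or $n$) gives the same residue as reducing directly, so $(k+1)\bmod m=(a+1)\bmod m$ and $(k+1)\bmod n=(b+1)\bmod n$.

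The only step needing care is the wrap-around case $k=mn-1$. There $(k+1)\bmod mn=0$, so $\phi(0)=(0,0)$, while $a=m-1$ and $b=n-1$ give $(a+1)\bmod m=0$ and $(b+1)\bmod n=0$. The two sides agree, and this case is in fact already covered by additivity of $\phi$. With agreement on every basis vector, linearity yields $\rho_2\circ\tau_{mn}=(\tau_m\otimes\tau_n)\circ\rho_2$.

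Combining this with Lemma \ref{Lem5.2} also gives $\Psi\circ\tau_{mn}=(T_m\odot T_n)\circ\Psi$, which is presumably how the result will be used later.
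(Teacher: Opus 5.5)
Your proposal is correct and follows essentially the same approach as the paper. The paper also checks the identity on each basis vector $\boldsymbol{g}_k$, computes both composites, and notes that reducing $k+1$ modulo $mn$ and then modulo $m$ or $n$ gives $(a+1)\bmod m$ and $(b+1)\bmod n$. Your remark on additivity of $\phi$, which covers the wrap-around case $k=mn-1$, is a harmless extra justification of that same step.
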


\begin{proof} 
It suffices to verify the equality on every basis vector \(\boldsymbol{g}_k\).
Since all mappings are linear, equality on a basis implies equality over the entire ambient vector space. Fix an arbitrary integer \(k\in\{0,1,\dots,mn-1\}\), and set
\[a = k\bmod{m},\quad b = k\bmod{n},\]
so that \(\rho_2(\boldsymbol{g}_k)=\boldsymbol{e}_a\otimes \boldsymbol{f}_b\). We first evaluate the composition \(\rho_2\circ \tau_{mn}\) at \(\boldsymbol{g}_k\):
\[\begin{aligned}
\rho_2\big(\tau_{mn}(\boldsymbol{g}_k)\big) &= \rho_2\big(\boldsymbol{g}_{(k+1)\bmod mn}\big)\\
&= \boldsymbol{e}_{(k+1)\bmod m}\otimes \boldsymbol{f}_{(k+1)\bmod n}\\
&= \boldsymbol{e}_{(a+1)\bmod m}\otimes \boldsymbol{f}_{(b+1)\bmod n}.
\end{aligned}\]
Next compute \((\tau_m\otimes\tau_n)\circ\rho_2\) acting on \(\boldsymbol{g}_k\):
\[\begin{aligned}
(\tau_m\otimes\tau_n)\big(\rho_2(\boldsymbol{g}_k)\big) &= (\tau_m\otimes\tau_n)(\boldsymbol{e}_a\otimes\boldsymbol{f}_b)\\
&=\tau_m(\boldsymbol{e}_a)\otimes\tau_n(\boldsymbol{f}_b)\\
&=\boldsymbol{e}_{(a+1)\bmod{m}}\otimes \boldsymbol{f}_{(b+1)\bmod{n}}.
\end{aligned}\]
The two computations produce exactly the same result. Therefore
\(\rho_2\big(\tau_{mn}(\boldsymbol{g}_k)\big)=(\tau_m\otimes\tau_n)\big(\rho_2(\boldsymbol{g}_k)\big)\)
holds for every basis element \(\boldsymbol{g}_k\). Extending linearly to the whole space, we arrive at the identity
\[\rho_2\circ\tau_{mn}=(\tau_m\otimes\tau_n)\circ\rho_2.\]
\end{proof}

Summing up Lemma \ref{Lem5.2} and Lemma \ref{Lem5.3}, we obtain the following commutative diagram.
\[\begin{tikzcd}
\mathbb{F}_q^{mn} \arrow[r, "\rho_2", "\cong"'] \arrow[d, "\tau_{mn}"']
& \mathbb{F}_q^m\otimes\mathbb{F}_q^n \arrow[r, "\rho_1", "\cong"'] \arrow[d, "\tau_m\otimes\tau_n"']
& \operatorname{Mat}_{m\times n}(\mathbb{F}_q) \arrow[d, "T_m\odot T_n"'] \\
\mathbb{F}_q^{mn} \arrow[r, "\rho_2", "\cong"']
& \mathbb{F}_q^m\otimes\mathbb{F}_q^n \arrow[r, "\rho_1", "\cong"']
& \operatorname{Mat}_{m\times n}(\mathbb{F}_q)
\end{tikzcd}\]

{We now discuss the tensor product decomposition associated with the Discrete Fourier Transform. Set \(M=\operatorname{ord}_{mn}(q)\). Since \(\gcd(mn,q)=1\), there exists a primitive \((mn)\)-th root of unity \(\omega\in\mathbb{F}_{q^M}\) such that \(\omega^{mn}=1\) and \(\omega^i\neq1\) for every integer \(1\le i\le mn-1\). Define \(\omega_m=\omega^n\) and \(\omega_n=\omega^m\); then \(\omega_m,\omega_n\) are primitive \(m\)-th and \(n\)-th roots of unity, respectively.} Recall that for any vector \(\boldsymbol{x}\in\mathbb{F}_q^{mn}\), its Discrete Fourier Transform is the linear operator \[\operatorname{DFT}_{mn}\colon\mathbb{F}_q^{mn}\to\mathbb{F}_{q^M}^{mn}\]
defined by
\[\operatorname{DFT}_{mn}(\boldsymbol{x})_k = \sum_{j=0}^{mn-1} x_j \omega^{jk},\quad k=0,1,\dots,mn-1,\]
where \(\omega\) is a primitive $mn$-th root of unity.

\begin{lem}\label{Lem5.4}
Let \(\boldsymbol{u}\in\mathbb{F}_q^m,\boldsymbol{v}\in\mathbb{F}_q^n\) and set \(\boldsymbol{x}=\rho_2^{-1}(\boldsymbol{u}\otimes \boldsymbol{v})\in\mathbb{F}_q^{mn}\). Then for every index \(k\in[mn]\), we have
\[\widehat{\boldsymbol{x}}_k=\widehat{\boldsymbol{u}}_{kt\bmod m}\cdot \widehat{\boldsymbol{v}}_{ks\bmod n},\]
where integers \(s,t\) satisfy \(sm+tn=1\). Furthermore, the support of \(\widehat{\boldsymbol{x}}\) is given by
\[\operatorname{supp}(\widehat{\boldsymbol{x}}) = \bigl\{ an+bm \pmod{mn} : a\in\operatorname{supp}(\widehat{\boldsymbol{u}}),\; b\in\operatorname{supp}(\widehat{\boldsymbol{v}}) \bigr\}.\]
\end{lem}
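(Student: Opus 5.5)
The plan is a direct computation. First unwind what $\boldsymbol{x}=\rho_2^{-1}(\boldsymbol{u}\otimes\boldsymbol{v})$ means coordinatewise, then split the exponent $jk$ via the CRT, and finally read off the support.

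\textbf{Coordinates of $\boldsymbol{x}$.} By Remark \ref{rem5.1} and Example \ref{exmp5.1}(1), $\rho_1(\boldsymbol{u}\otimes\boldsymbol{v})=\boldsymbol{u}\boldsymbol{v}^T$. Hence $x_j=u_{j\bmod m}\,v_{j\bmod n}$ for every $j\in[mn]$. Since $\phi$ is a bijection, the sum over $j\in[mn]$ can be reindexed as a double sum over pairs $(a,b)\in[m]\times[n]$, with $j=\phi^{-1}(a,b)\equiv atn+bsm\pmod{mn}$.

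\textbf{Splitting the exponent.} Substituting into $\widehat{\boldsymbol{x}}_k=\sum_j x_j\omega^{jk}$ gives
\[
\widehat{\boldsymbol{x}}_k=\sum_{a=0}^{m-1}\sum_{b=0}^{n-1}u_av_b\,\omega^{k(atn+bsm)}=\Bigl(\sum_{a}u_a(\omega^{n})^{akt}\Bigr)\Bigl(\sum_b v_b(\omega^{m})^{bks}\Bigr).
\]
Because $\omega^{mn}=1$, the exponent may be taken modulo $mn$ throughout. With $\omega_m=\omega^n$ and $\omega_n=\omega^m$, the first factor is $\widehat{\boldsymbol{u}}_{kt\bmod m}$, computed with $\omega_m$, and the second is $\widehat{\boldsymbol{v}}_{ks\bmod n}$, computed with $\omega_n$. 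Here I will state explicitly that the DFTs on $\mathbb{F}_q^m$ and $\mathbb{F}_q^n$ are taken with respect to these roots $\omega_m,\omega_n$. All exponents are integers, and reducing $kt$ modulo $m$ is harmless since $\omega_m^m=1$.

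\textbf{Support.} By the product formula, $k\in\operatorname{supp}(\widehat{\boldsymbol{x}})$ if and only if $kt\bmod m\in\operatorname{supp}(\widehat{\boldsymbol{u}})$ and $ks\bmod n\in\operatorname{supp}(\widehat{\boldsymbol{v}})$. Since $sm+tn=1$, we have $\gcd(t,m)=1$ and $\gcd(s,n)=1$, so the map $\kappa\colon k\mapsto(kt\bmod m,\,ks\bmod n)$ is a bijection $\mathbb{Z}_{mn}\to\mathbb{Z}_m\times\mathbb{Z}_n$; it is $\phi$ followed by multiplication by units in each factor. Its inverse is $(a,b)\mapsto an+bm\pmod{mn}$. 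To verify this, take $k=an+bm$. Then $kt\equiv ant\equiv a(1-sm)\equiv a\pmod m$, and symmetrically $ks\equiv bms\equiv b\pmod n$. So $\kappa(an+bm)=(a,b)$, and as both sides have $mn$ elements, $\kappa$ is bijective with this inverse. Therefore the support equals $\{an+bm\bmod mn : a\in\operatorname{supp}(\widehat{\boldsymbol{u}}),\ b\in\operatorname{supp}(\widehat{\boldsymbol{v}})\}$.

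The only real obstacle is bookkeeping. One must keep the choice of roots $\omega_m,\omega_n$ consistent, and make sure the inverse-CRT representation $an+bm$, rather than $atn+bsm$, is the one matching the indices $kt$ and $ks$. The check $kt\equiv a \pmod m$ above settles the second point.
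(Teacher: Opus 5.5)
Your proposal is correct and follows essentially the same route as the paper: compute $x_j=u_{j\bmod m}v_{j\bmod n}$, reindex the DFT sum over CRT pairs $(a,b)$, factor it as $\widehat{\boldsymbol{u}}_{kt\bmod m}\cdot\widehat{\boldsymbol{v}}_{ks\bmod n}$ using $\omega_m=\omega^n$ and $\omega_n=\omega^m$, and then solve $kt\equiv a\pmod m$, $ks\equiv b\pmod n$ to get $k\equiv an+bm\pmod{mn}$. The only difference is cosmetic: you split $\omega^{k(atn+bsm)}$ directly without expanding $k$, whereas the paper also writes $k=\phi^{-1}(c,d)$ and uses $n^2t^2\equiv nt$ and $m^2s^2\equiv ms\pmod{mn}$ to drop the cross terms, so your version is slightly shorter.
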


\begin{proof}
Let \(\boldsymbol{x}= \rho_2^{-1}(\boldsymbol{u}\otimes \boldsymbol{v})\), \(\boldsymbol{u}\in\mathbb{F}_q^m,\boldsymbol{v}\in\mathbb{F}_q^n\). Recall that
\[\rho_2(\boldsymbol{g}_k)=\boldsymbol{e}_{k\bmod m}\otimes \boldsymbol{f}_{k\bmod n}= \boldsymbol{e}_a\otimes\boldsymbol{f}_b\]
with \(\phi(k)=(a,b)\), which implies \[\rho_2^{-1}(\boldsymbol{e}_a\otimes\boldsymbol{f}_b)=\boldsymbol{g}_{\phi^{-1}(a,b)}.\]
Here the inverse mapping is given by 
\[\phi^{-1}(a,b)=ant+bms \pmod{mn},\]
where integers \(s,t\) satisfy \(sm+tn=1\). Write the vectors as basis expansions
\[\boldsymbol{u}=\sum_{a=0}^{m-1}u_a \boldsymbol{e}_a,\quad \boldsymbol{v}=\sum_{b=0}^{n-1}v_b \boldsymbol{f}_b.\]
By bilinearity of the tensor product,
\[\boldsymbol{u}\otimes\boldsymbol{v}= \biggl(\sum_{a=0}^{m-1}u_a \boldsymbol{e}_a\biggr)\otimes\biggl(\sum_{b=0}^{n-1}v_b \boldsymbol{f}_b\biggr)=\sum_{a=0}^{m-1}\sum_{b=0}^{n-1}u_a v_b\,(\boldsymbol{e}_a\otimes\boldsymbol{f}_b).\]
Applying \(\rho_2^{-1}\) to both sides yields
\[\boldsymbol{x}= \rho_2^{-1}(\boldsymbol{u}\otimes\boldsymbol{v})=\sum_{a=0}^{m-1}\sum_{b=0}^{n-1}u_a v_b\,\rho_2^{-1}(\boldsymbol{e}_a\otimes\boldsymbol{f}_b)=\sum_{a=0}^{m-1}\sum_{b=0}^{n-1}u_a v_b \,\boldsymbol{g}_{\phi^{-1}(a,b)}.\]
Therefore for every index \(k\in[mn]\), the entry of \(\boldsymbol{x}\) reads
\[x_k = u_{k\bmod m}\cdot v_{k\bmod n}.\]
From the definition of Discrete Fourier Transform,
\[\widehat{\boldsymbol{x}}_k=\sum_{j=0}^{mn-1}x_j \omega^{jk},\quad k=0,1,\dots,mn-1,\]
and substituting the above component formula we obtain
\[\widehat{\boldsymbol{x}}_k=\sum_{j=0}^{mn-1}u_{j\bmod m}\cdot v_{j\bmod n}\,\omega^{jk}.\]
Since \(\gcd(m,n)=1\), as $j$ ranges from $0$ to \(mn-1\), the pair \((a,b)=(j\bmod m,j\bmod n)\) exhausts all elements of \(\mathbb Z_m\times\mathbb Z_n\) bijectively, with the index inversion 
\[j=\phi^{-1}(a,b)=ant+bms\pmod{mn}.\]
Changing summation variables to \(a,b\), we rewrite
\[\widehat{\boldsymbol{x}}_k=\sum_{a=0}^{m-1}\sum_{b=0}^{n-1}u_a v_b \,\omega^{\phi^{-1}(a,b)\cdot k}.\]
Set \((c,d)=\phi(k)\), so \(k=\phi^{-1}(c,d)=cnt+dms\pmod{mn}\). Compute the exponent modulo $mn$:
\[\begin{aligned}
\phi^{-1}(a,b)\cdot k &= \big(ant+bms\big)\big(cnt+dms\big)\\
&\equiv acn^2t^2+bdm^2s^2 \pmod{mn}\\
&\equiv acnt+bdms \pmod{mn}.
\end{aligned}\]
Substituting back into the DFT expression,
\[\begin{aligned}
\widehat{\boldsymbol{x}}_k
&=\sum_{a=0}^{m-1}\sum_{b=0}^{n-1}u_a v_b \,\omega^{acnt+bdms}\\
&=\sum_{a=0}^{m-1}\sum_{b=0}^{n-1}u_a v_b \,\omega_m^{act}\cdot\omega_n^{bds}\\
&=\biggl(\sum_{a=0}^{m-1}u_a \omega_m^{act}\biggr)\biggl(\sum_{b=0}^{n-1}v_b \omega_n^{bds}\biggr)\\
&=\widehat{\boldsymbol{u}}_{ct}\cdot \widehat{\boldsymbol{v}}_{ds}\\
&=\widehat{\boldsymbol{u}}_{kt\bmod m}\cdot \widehat{\boldsymbol{v}}_{ks\bmod n}.
\end{aligned}\]
From the identity \(\widehat{\boldsymbol{x}}_k=\widehat{\boldsymbol{u}}_{kt\bmod m}\cdot \widehat{\boldsymbol{v}}_{ks\bmod n}\), we have the equivalence
\[\widehat{\boldsymbol{x}}_k\neq 0 \iff \widehat{\boldsymbol{u}}_{kt\bmod m}\neq 0 \text{ and } \widehat{\boldsymbol{v}}_{ks\bmod n}\neq 0.\]
The above condition is equivalent to the existence of \(a\in \operatorname{supp}(\widehat{\boldsymbol{u}})\), \(b\in \operatorname{supp}(\widehat{\boldsymbol{v}})\) such that
\[\begin{cases}
kt\equiv a \pmod{m}\\
ks\equiv b \pmod{n}.
\end{cases}\]
Recall \(sm+tn=1\). For any fixed \(a\in\mathbb Z_m\), \(b\in\mathbb Z_n\), this system of congruences has the unique solution
\[k\equiv an+bm \pmod{mn}.\]
Consequently,
\[\operatorname{supp}(\widehat{\boldsymbol{x}})=\bigl\{an+bm\pmod{mn} : a\in\operatorname{supp}(\widehat{\boldsymbol{u}}),\,b\in\operatorname{supp}(\widehat{\boldsymbol{v}})\bigr\}.\]
\end{proof}

With all preliminary results established above, we now turn to the proof of Theorem \ref{Thm5.1}.
\begin{proof}[The proof of Theorem \ref{Thm5.1}]
Let \(k=h_q(m)\) and \(l=h_q(n)\). We prove Theorem \ref{Thm5.1} by constructing a cyclically covering subspace of \(\mathbb{F}_q^{mn}\) with codimension \(k+l\).

Since \(k=h_q(m)\) and \(l=h_q(n)\), there exist two families of vectors
\[\boldsymbol{\alpha}_1,\dots,\boldsymbol{\alpha}_k\in\mathbb{F}_q^m,\qquad \boldsymbol{\beta}_1,\dots,\boldsymbol{\beta}_l\in\mathbb{F}_q^n\]
satisfying
\[V_m=\bigcap_{i=1}^k V_{\boldsymbol{\alpha}_i},\qquad V_n=\bigcap_{j=1}^l V_{\boldsymbol{\beta}_j},\]
where \(V_m\subset\mathbb{F}_q^m\) and \(V_n\subset\mathbb{F}_q^n\) are cyclically covering subspaces of codimension $k$ and $l$, respectively.
Furthermore, by Theorem \ref{Thm2}, we may assume \(0\notin\operatorname{supp}(\widehat{\boldsymbol{\alpha}}_i)\) and \(0\notin\operatorname{supp}(\widehat{\boldsymbol{\beta}}_j)\) for all integers \(1\le i\le k\) and \(1\le j\le l\).

We define two families of new vectors in $\mathbb{F}_q^{mn}$ via the tensor product:
\[\boldsymbol{\gamma}_i = \rho_2^{-1}(\boldsymbol{\alpha}_i \otimes \boldsymbol{1}_n) \quad (1\le i\le k),\qquad
\boldsymbol{\delta}_j = \rho_2^{-1}(\boldsymbol{1}_m \otimes \boldsymbol{\beta}_j) \quad (1\le j\le l),\]
where $\boldsymbol{1}_m,\boldsymbol{1}_n$ denote the all-one vectors of $\mathbb{F}_q^m$ and $\mathbb{F}_q^n$, respectively.
For the all-one vector, its Discrete Fourier Transform satisfies $\operatorname{supp}(\widehat{\boldsymbol{1}}_n)=\{0\}$.
By Lemma \ref{Lem5.4}, we obtain
\[\operatorname{supp}(\widehat{\boldsymbol{\gamma}}_i) = \bigl\{ an+bm \pmod{mn} : a\in\operatorname{supp}(\widehat{\boldsymbol{\alpha}}_i),\ b\in\{0\} \bigr\}
= \bigl\{ an \pmod{mn} : a\in\operatorname{supp}(\widehat{\boldsymbol{\alpha}}_i) \bigr\}.\]
Similarly,
\[\operatorname{supp}(\widehat{\boldsymbol{\delta}}_j) = \bigl\{ bm \pmod{mn} : b\in\operatorname{supp}(\widehat{\boldsymbol{\beta}}_j) \bigr\}.\]
Since $0\notin\operatorname{supp}(\widehat{\boldsymbol{\alpha}}_i)$ and $\gcd(m,n)=1$, the congruence $an\equiv0\pmod{mn}$ holds if and only if $a\equiv0\pmod{m}$. This implies $0\notin\operatorname{supp}(\widehat{\boldsymbol{\gamma}}_j)$.
By symmetry, $0\notin\operatorname{supp}(\widehat{\boldsymbol{\delta}}_j)$.
Suppose $an\equiv bm\pmod{mn}$. Reducing modulo $m$, we get $an\equiv0\pmod{m}$, hence $a\equiv0\pmod{m}$, which leads to a contradiction. Therefore the two families of support sets are mutually disjoint.
The Discrete Fourier Transform is a linear isomorphism; vectors with pairwise disjoint supports are linearly independent. Consequently, the set $\{\boldsymbol{\gamma}_1,\dots,\boldsymbol{\gamma}_k,\boldsymbol{\delta}_1,\dots,\boldsymbol{\delta}_l\}$ is linearly independent.

Define
\[W = \biggl(\bigcap_{j=1}^k V_{\boldsymbol{\gamma}_j}\biggr)\bigcap \biggl(\bigcap_{j=1}^l V_{\boldsymbol{\delta}_j}\biggr).\]
It follows that $\operatorname{codim} W  = h_q(m)+h_q(n)$.
	
Assume for contradiction that W is not a cyclically covering subspace. By Theorem \ref{Thm2}, there exists a common vector \(\boldsymbol{x}\in\mathbb{F}_q^{mn}\), together with a collection of corresponding vectors
\[\boldsymbol{c}_{\gamma_1},\dots,\boldsymbol{c}_{\gamma_k},\boldsymbol{c}_{\delta_1},\dots,\boldsymbol{c}_{\delta_l}\in \mathbb{F}_{q^M}^{mn},\]
satisfying the three conditions below:

(1) Support condition:
\[\operatorname{supp}(\boldsymbol{c}_{\gamma_i}) \subseteq -\operatorname{supp}(\widehat{\boldsymbol{\gamma}}_i)\;\;(1\le i\le k),\quad \operatorname{supp}(\boldsymbol{c}_{\delta_j}) \subseteq -\operatorname{supp}(\widehat{\boldsymbol{\delta}}_j)\;\;(1\le j\le l);\]

(2) Common-vector condition:
\[(\boldsymbol{c}_{\gamma_i})_s = \widehat{\boldsymbol{x}}_s \cdot (\widehat{\boldsymbol{\gamma}}_i)_{-s}\;\;(1\le i\le k),\quad (\boldsymbol{c}_{\delta_j})_s = \widehat{\boldsymbol{x}}_s \cdot (\widehat{\boldsymbol{\delta}}_j)_{-s}\;\;(1\le j\le l);\]

(3) No-common-zero condition: There is no index \(i_0\in[mn]\) such that \(P_{\boldsymbol{c}_{\gamma_i}}(\omega^{i_0})=0\) and \(P_{\boldsymbol{c}_{\delta_j}}(\omega^{i_0})=0\) hold simultaneously for all \(1\le i\le k\), \(1\le j\le l\).

We now define vectors associated with the smaller spaces:
\[\text{For each}~\boldsymbol{c}_{\gamma_i},~\text{set}~\boldsymbol{u}_i\in \operatorname{DFT}(\mathbb{F}_q^m)~\text{via}~(\boldsymbol{u}_i)_{k_1} = (\boldsymbol{c}_{\gamma_i})_{k_1n\pmod{mn}};\]
\[\text{For each}~\boldsymbol{c}_{\delta_j},~\text{set}~\boldsymbol{v}_j\in \operatorname{DFT}(\mathbb{F}_q^n)~\text{via}~(\boldsymbol{v}_j)_{k_2} = (\boldsymbol{c}_{\delta_j})_{k_2m\pmod{mn}}.\]
It is immediate that
\[\operatorname{supp}(\boldsymbol{u}_i)\subseteq -\operatorname{supp}(\widehat{\boldsymbol{\alpha}}_i),\qquad \operatorname{supp}(\boldsymbol{v}_j)\subseteq -\operatorname{supp}(\widehat{\boldsymbol{\beta}}_j).\]

We next verify that each $\boldsymbol{u}_i$ satisfies the common-vector condition from Theorem \ref{Thm2} over $\mathbb{F}_q^m$.
From the common-vector condition,
\[(\boldsymbol{c}_{\gamma_i})_{k_1n} = \widehat{\boldsymbol{x}}_{k_1n}\cdot (\widehat{\boldsymbol{\gamma}}_i)_{-k_1n}.\]
Using the explicit DFT formula in Lemma \ref{Lem5.4} to evaluate $(\widehat{\boldsymbol{\gamma}}_i)_{-k_1n}$:
\[(\widehat{\boldsymbol{\gamma}}_i)_{-k_1n} = (\widehat{\boldsymbol{\alpha}}_i)_{(-k_1n)t \bmod m}\cdot (\widehat{\boldsymbol{1}}_n)_{(-k_1n)s \bmod n}= (\widehat{\boldsymbol{\alpha}}_i)_{-k_1 \bmod m}\cdot (\widehat{\boldsymbol{1}}_n)_{0}=n\cdot (\widehat{\boldsymbol{\alpha}}_i)_{-k_1 \bmod m}.\]
Substitution into the common-vector identity gives
\[(\boldsymbol{u}_i)_{k_1} = \widehat{\boldsymbol{x}}_{k_1n}\cdot n\cdot (\widehat{\boldsymbol{\alpha}}_i)_{-k_1\pmod{m}}.\]
Set $\widehat{\boldsymbol{y}}_{k_1} = n\cdot \widehat{\boldsymbol{x}}_{k_1n}$. Then
\[(\boldsymbol{u}_i)_{k_1} = \widehat{\boldsymbol{y}}_{k_1}\cdot (\widehat{\boldsymbol{\alpha}}_i)_{-k_1\pmod{m}},\]
which is exactly the common-vector condition of Theorem \ref{Thm2} for the space $\mathbb{F}_q^m$.
Furthermore, one can readily verify that \((\widehat{\boldsymbol{y}}_{k_1})^q = \widehat{\boldsymbol{y}}_{qk_1}\) and \(\big(({\boldsymbol{u}}_i)_{k_1}\big)^q = ({\boldsymbol{u}}_i)_{qk_1}\) for all \(k_1\in[m]\).
Therefore, by the Inverse Discrete Fourier Transform, $\widehat{\boldsymbol{y}}$ corresponds to a vector $\boldsymbol{y}\in\mathbb{F}_q^m$. By symmetry, each $\boldsymbol{v}_j$ satisfies the analogous common-vector condition of Theorem \ref{Thm2} over $\mathbb{F}_q^n$.

Now we analyze the associated trace polynomials.
For a vector \(\boldsymbol{c}\in\mathbb{F}_{q^M}^{mn}\) the trace polynomial is defined by
\[P_{\boldsymbol{c}}(t) = \sum_{s=0}^{mn-1} c_s t^s.\]
Since entries of $\boldsymbol{c}_{\gamma_i}$ vanish except at indices of the form $s=k_1n$, we have
\[P_{\boldsymbol{c}_{\gamma_i}}(t) = \sum_{k_1=0}^{m-1} (\boldsymbol{u}_i)_{k_1} t^{k_1n}= P_{\boldsymbol{u}_i}(t^{\,n}),\]
where \(P_{\boldsymbol{u}_i}(z)=\sum_{k_1} (\boldsymbol{u}_i)_{k_1} z^{k_1}\) is the trace polynomial of \(\boldsymbol{u}_i\) in \(\mathbb{F}_q^m\).
Analogously,
\[P_{\boldsymbol{c}_{\delta_j}}(t) = \sum_{k_2=0}^{n-1} (\boldsymbol{v}_j)_{k_2} t^{k_2m}= P_{\boldsymbol{v}_j}(t^{\,m}),\]
with \(P_{\boldsymbol{v}_j}(z)=\sum_{k_2} (\boldsymbol{v}_j)_{k_2} z^{k_2}\) being the trace polynomial of \(\boldsymbol{v}_j\) in $\mathbb{F}_q^n$.
	
Since $V_m$ is a cyclically covering subspace of $\mathbb{F}_q^m$, by Theorem \ref{Thm2}, there exists some $a_0\in\mathbb Z_m$ such that
\[P_{\boldsymbol{u}_i}(\omega_m^{a_0}) = 0 ~~ \text{for all }~1\le i\le k.\]
Similarly, as $V_n$ is cyclically covering in $\mathbb{F}_q^n$, there exists $b_0\in\mathbb Z_n$ satisfying
\[P_{\boldsymbol{v}_j}(\omega_n^{b_0}) = 0 \quad \text{for all }1\le j\le l.\]
By the Chinese Remainder Theorem, there is a unique integer $i_0\in[mn]$ with
\[i_0\equiv a_0\pmod{m},\qquad i_0\equiv b_0\pmod{n}.\]
We evaluate the trace polynomials at $\omega^{i_0}$.
For each $\boldsymbol{c}_{\gamma_i}$:
\[P_{\boldsymbol{c}_{\gamma_i}}(\omega^{i_0})= P_{\boldsymbol{u}_i}\big((\omega^{i_0})^{n}\big)= P_{\boldsymbol{u}_i}\big(\omega^{n i_0}\big)= P_{\boldsymbol{u}_i}\big((\omega^n)^{i_0}\big)= P_{\boldsymbol{u}_i}(\omega_m^{a_0}) = 0.\]
For each $\boldsymbol{c}_{\delta_j}$:
\[P_{\boldsymbol{c}_{\delta_j}}(\omega^{i_0})= P_{\boldsymbol{v}_j}\big((\omega^{i_0})^{m}\big)= P_{\boldsymbol{v}_j}\big(\omega^{m i_0}\big)= P_{\boldsymbol{v}_j}\big((\omega^m)^{i_0}\big)= P_{\boldsymbol{v}_j}(\omega_n^{b_0}) = 0.\]

We have thus produced a common index $i_0$ annihilating all trace polynomials simultaneously, which directly contradicts the no-common-zero condition from our initial contradictory assumption.
Hence our contrary assumption fails, and $W$ is a cyclically covering subspace of $\mathbb{F}_q^{mn}$.
We have constructed a cyclically covering subspace of $\mathbb{F}_q^{mn}$ with codimension $h_q(m)+h_q(n)$. By definition of the maximal codimension, we have
\[h_q(mn)\ge h_q(m)+h_q(n).\]
This completes the proof of Theorem \ref{Thm5.1}.
\end{proof}

\begingroup
\begin{rem}\label{rem:superadditivity-sharpness}
The superadditivity inequality can be either sharp or strict.

\(\mathrm{(i)}\) We have
\[
h_2(3)=1,\qquad h_2(5)=2,\qquad h_2(15)=3.
\]
The first two values follow from Lemmas \ref{lem1.1} \(\mathrm{(viii)}\) and \ref{lem1.2} \(\mathrm{(iv)}\), respectively. Moreover, Theorem \ref{Thm5.1} gives
\[
h_2(15)\ge h_2(3)+h_2(5)=3,
\]
whereas Lemma \ref{lem1.1} \(\mathrm{(iii)}\) gives
\[
h_2(15)\le \lfloor\log_2 15\rfloor=3.
\]
Thus \(h_2(15)=h_2(3)+h_2(5)\).

\(\mathrm{(ii)}\) Let \(t>8\) be a prime for which \(2\) is a primitive root modulo \(t\), let \(d\) be a positive integer divisible by \(t-1\), and set
\[
m=t,\qquad n=\frac{2^d-1}{t}.
\]
Since \(\operatorname{ord}_t(2)=t-1\mid d\), the integer \(t\) divides \(2^d-1\), so \(n\) is an integer. Moreover, \(n>1\) is odd; hence \(h_2(n)>0\) by Lemma \ref{lem1.1} \(\mathrm{(viii)}\). Lemma \ref{lem1.2} \(\mathrm{(iv)}\) gives \(h_2(t)=2\), while \(mn=2^d-1\) and Lemma \ref{lem1.1} \(\mathrm{(iv)}\) give \(h_2(mn)=d-1\). Finally, \(t>8\) implies
\[
n=\frac{2^d-1}{t}<2^{d-3},
\]
and therefore
\[
h_2(n)\le \lfloor\log_2 n\rfloor\le d-4.
\]
Consequently,
\[
h_2(m)+h_2(n)\le 2+(d-4)=d-2<d-1=h_2(mn).
\]
\end{rem}

\begin{prop}\label{prop:superadditivity-saturation}
Let \(m,n\) be positive integers. If
\[
h_q(m)+h_q(n)=\lfloor\log_q(mn)\rfloor,
\]
then
\[
h_q(mn)=h_q(m)+h_q(n).
\]
More generally, if \(h_q(n_i)=a_i\) for \(1\le i\le r\) and
\[
\sum_{i=1}^r a_i=\left\lfloor\log_q\left(\prod_{i=1}^r n_i\right)\right\rfloor,
\]
then
\[
h_q\left(\prod_{i=1}^r n_i\right)=\sum_{i=1}^r a_i.
\]
\end{prop}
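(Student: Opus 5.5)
The approach is a squeeze between a lower bound from superadditivity and the universal upper bound of Lemma \ref{lem1.1} (iii). For the two-factor statement I will invoke Lemma \ref{lem1.2} (i), the superadditivity inequality $h_q(mn)\ge h_q(m)+h_q(n)$, which holds for all positive integers $m,n$. I deliberately use this rather than Theorem \ref{Thm5.1}, since the proposition imposes no coprimality assumptions. On the other side, Lemma \ref{lem1.1} (iii) gives $h_q(mn)\le\lfloor\log_q(mn)\rfloor$. By hypothesis this upper bound equals $h_q(m)+h_q(n)$. The chain
\[
h_q(m)+h_q(n)\le h_q(mn)\le \lfloor\log_q(mn)\rfloor = h_q(m)+h_q(n)
\]
then forces equality throughout.

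For the general statement I first extend superadditivity to $r$ factors by induction on $r$. The case $r=1$ is trivial. For $r\ge2$, apply Lemma \ref{lem1.2} (i) to the pair $N_{r-1}=\prod_{i=1}^{r-1}n_i$ and $n_r$:
\[
h_q\Bigl(\prod_{i=1}^r n_i\Bigr)\ge h_q(N_{r-1})+h_q(n_r)\ge \sum_{i=1}^{r-1}a_i + a_r .
\]
The second inequality uses the inductive hypothesis in its inequality form, $h_q(N_{r-1})\ge\sum_{i<r}a_i$. The induction therefore carries only the inequality and never needs the saturation hypothesis for partial products.

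Next, Lemma \ref{lem1.1} (iii) bounds $h_q\bigl(\prod n_i\bigr)$ from above by $\lfloor\log_q\prod n_i\rfloor$, which equals $\sum a_i$ by assumption. Combining the two bounds gives $h_q\bigl(\prod n_i\bigr)=\sum a_i$.

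There is no real obstacle in this argument. The only point needing care is to keep the induction in inequality form, because the saturation hypothesis need not hold for partial products. One should also confirm that both cited lemmas hold without any coprimality condition between the factors or with $q$; as stated in the paper, both do.
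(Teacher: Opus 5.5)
Your proposal is correct and is essentially the paper's proof: you squeeze $h_q\bigl(\prod_{i=1}^r n_i\bigr)$ between the superadditivity bound of Lemma \ref{lem1.2} (i), applied repeatedly for $r$ factors, and the upper bound $\lfloor\log_q(\cdot)\rfloor$ of Lemma \ref{lem1.1} (iii). Your explicit induction carried in inequality form is just a careful spelling-out of the paper's ``repeated application,'' and you are right that neither lemma requires any coprimality hypothesis.
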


\begin{proof}
By Lemma \ref{lem1.2} \(\mathrm{(i)}\) and Lemma
\ref{lem1.1} \(\mathrm{(iii)}\), we have
\[
h_q(m)+h_q(n)
\le h_q(mn)
\le \lfloor\log_q(mn)\rfloor
= h_q(m)+h_q(n).
\]
Hence
\[
h_q(mn)=h_q(m)+h_q(n).
\]

For the multi-factor case, repeated application of Lemma
\ref{lem1.2} \(\mathrm{(i)}\) gives
\[
\sum_{i=1}^r a_i
\le
h_q\left(\prod_{i=1}^r n_i\right).
\]
By Lemma \ref{lem1.1} \(\mathrm{(iii)}\),
\[
h_q\left(\prod_{i=1}^r n_i\right)
\le
\left\lfloor
\log_q\left(\prod_{i=1}^r n_i\right)
\right\rfloor
=
\sum_{i=1}^r a_i.
\]
Therefore,
\[
h_q\left(\prod_{i=1}^r n_i\right)
=
\sum_{i=1}^r a_i.
\]
\end{proof}

\begin{exmp}\label{ex:geometric-sum-sharpness}
Let \(q\ge3\) be a prime power, and let \(k,l,d,e\) be positive integers. Set
\[
M=1+q^k+\cdots+q^{kd},\qquad
N=1+q^l+\cdots+q^{le}.
\]
If
\[
\gcd(d+1,q^k-1)=\gcd(e+1,q^l-1)=1,
\]
then Lemma \ref{lem1.1} \(\mathrm{(vi)}\) gives
\[
h_q(M)=kd,\qquad h_q(N)=le.
\]
Furthermore,
\[
q^{kd+le}\le MN
<\frac{q^{kd+le}}{(1-q^{-k})(1-q^{-l})}
\le\frac94q^{kd+le}
<q^{kd+le+1},
\]
where the last inequality uses \(q\ge3\). Hence
\(\lfloor\log_q(MN)\rfloor=kd+le\), and Proposition
\ref{prop:superadditivity-saturation} yields
\[
h_q(MN)=h_q(M)+h_q(N)=kd+le.
\]
\end{exmp}

\subsection{{A vanishing criterion for coprime products}}\label{subsec:vanishing-products}

\begin{lem}\label{lem:crt-coset-product}
Let \(q\) be a prime power, let \(m,n\ge2\), and assume that
\(\gcd(q,mn)=\gcd(m,n)=1\). Let
\[
\phi:\mathbb Z/(mn)\mathbb Z\longrightarrow
\mathbb Z/m\mathbb Z\times\mathbb Z/n\mathbb Z
\]
be the Chinese remainder isomorphism, and write \(\phi(j)=(a,b)\).
If
\[
\gcd\bigl(\lvert C_a^{(m)}\rvert,\lvert C_b^{(n)}\rvert\bigr)=1,
\]
then
\[
\phi\bigl(C_j^{(mn)}\bigr)=C_a^{(m)}\times C_b^{(n)}.
\]
\end{lem}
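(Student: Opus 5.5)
The plan is to show the two inclusions by comparing orbits under multiplication by \(q\). The map \(\phi\) is a ring isomorphism, so it commutes with multiplication by \(q\): \(\phi(jq^i)=(aq^i\bmod m,\ bq^i\bmod n)\). The inclusion
\[
\phi\bigl(C_j^{(mn)}\bigr)\subseteq C_a^{(m)}\times C_b^{(n)}
\]
is immediate from this, since every element of \(C_j^{(mn)}\) has the form \(jq^i\).

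For equality, we compare cardinalities. Set \(m_a=\lvert C_a^{(m)}\rvert\) and \(m_b=\lvert C_b^{(n)}\rvert\). By definition, \(m_a\) is the least positive integer with \(aq^{m_a}\equiv a\pmod m\), and \(m_b\) is the analogous integer for \(b\) modulo \(n\). The set \(\{i\ge0: aq^i\equiv a \pmod m\}\) is exactly the set of multiples of \(m_a\), because it is the stabilizer of \(a\) under the cyclic action of \(\langle q\rangle\). The same holds for \(b\) and \(m_b\).

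By CRT, \(jq^i\equiv j\pmod{mn}\) holds if and only if \(aq^i\equiv a\pmod m\) and \(bq^i\equiv b\pmod n\). This happens if and only if \(m_a\mid i\) and \(m_b\mid i\), that is, \(\operatorname{lcm}(m_a,m_b)\mid i\). Hence
\[
\lvert C_j^{(mn)}\rvert=\operatorname{lcm}(m_a,m_b)=m_am_b,
\]
where the last equality uses the hypothesis \(\gcd(m_a,m_b)=1\).

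The map \(\phi\) is injective, so \(\lvert\phi(C_j^{(mn)})\rvert=m_am_b=\lvert C_a^{(m)}\times C_b^{(n)}\rvert\). Combined with the inclusion, this gives equality.

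The only mildly delicate point is the stabilizer claim, namely that \(aq^i\equiv a\) exactly when \(m_a\mid i\). This follows by writing \(i=sm_a+r\) with \(0\le r<m_a\) and using the invertibility of \(q\) modulo \(m\), which cancels the \(q^{sm_a}\) factor and forces \(r=0\) by minimality of \(m_a\). Everything else is bookkeeping through \(\phi\).
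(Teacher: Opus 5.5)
Your proof is correct and follows the same route as the paper: you show that \(\phi(C_j^{(mn)})\) is the simultaneous \(q\)-orbit of \((a,b)\), hence lies in \(C_a^{(m)}\times C_b^{(n)}\), then compute its size as \(\operatorname{lcm}(m_a,m_b)=m_am_b\) and conclude by comparing cardinalities. The one difference is that you justify the \(\operatorname{lcm}\) step with the stabilizer argument, using invertibility of \(q\) modulo \(m\) and \(n\); the paper asserts that cardinality without proof.
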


\begin{proof}
The set \(\phi(C_j^{(mn)})\) is the orbit of \((a,b)\) under simultaneous multiplication by \(q\):
\[
\phi(C_j^{(mn)})
=\{(aq^r\bmod m,bq^r\bmod n):r\ge0\}.
\]
It is therefore contained in \(C_a^{(m)}\times C_b^{(n)}\), and its cardinality is
\[
\operatorname{lcm}\bigl(\lvert C_a^{(m)}\rvert,
\lvert C_b^{(n)}\rvert\bigr).
\]
Under the coprimality hypothesis, this cardinality equals
\(\lvert C_a^{(m)}\rvert\lvert C_b^{(n)}\rvert\), which is the cardinality of \(C_a^{(m)}\times C_b^{(n)}\). The inclusion is thus an equality.
\end{proof}

\begin{thm}\label{thm:coprime-product-vanishing}
Let \(q\) be a prime power and let \(m,n\ge2\). Assume that
\[
\gcd(q,mn)=1,\qquad \gcd(m,n)=1.
\]
Set
\[
d_m=\operatorname{ord}_m(q),\qquad
d_n=\operatorname{ord}_n(q).
\]
If
\[
\gcd(d_m,d_n)=1,\qquad h_q(m)=h_q(n)=0,
\]
then
\[
h_q(mn)=0.
\]
\end{thm}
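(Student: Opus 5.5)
The plan is to verify condition \(\mathrm{(iii)}\) of Lemma \ref{lem:vanishing-full-weight} for \(mn\): every minimal cyclic code attached to a nonzero \(q\)-cyclotomic coset \(C_j^{(mn)}\) must contain a full-weight codeword. By the same lemma, the hypotheses \(h_q(m)=h_q(n)=0\) say that every minimal cyclic code of length \(m\) (resp. \(n\)) attached to a nonzero coset contains a full-weight codeword. The zero coset poses no difficulty, since the all-one vector has DFT support \(\{0\}\).

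Fix a nonzero \(j\in\mathbb Z/(mn)\mathbb Z\) and write \(\phi(j)=(a,b)\).
\begin{itemize}
\item Since \(\lvert C_a^{(m)}\rvert\mid d_m\), \(\lvert C_b^{(n)}\rvert\mid d_n\) and \(\gcd(d_m,d_n)=1\), Lemma \ref{lem:crt-coset-product} gives \(\phi(C_j^{(mn)})=C_a^{(m)}\times C_b^{(n)}\).
\item The building block is Lemma \ref{Lem5.4}. For \(\boldsymbol u\in\mathbb F_q^m\) and \(\boldsymbol v\in\mathbb F_q^n\), the vector \(\boldsymbol x=\rho_2^{-1}(\boldsymbol u\otimes\boldsymbol v)\) has coordinates \(x_k=u_{k\bmod m}v_{k\bmod n}\). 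Hence \(\boldsymbol x\) has full weight whenever \(\boldsymbol u\) and \(\boldsymbol v\) do.
\item Again by Lemma \ref{Lem5.4}, \(\operatorname{supp}(\widehat{\boldsymbol x})=\{a'n+b'm\}\) with \(a'\in\operatorname{supp}(\widehat{\boldsymbol u})\) and \(b'\in\operatorname{supp}(\widehat{\boldsymbol v})\).
\end{itemize}

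The one point requiring care is the index bookkeeping. We have \(\phi(a'n+b'm)=(a'n\bmod m,\ b'm\bmod n)\), so
\[
\operatorname{supp}(\widehat{\boldsymbol x})=\phi^{-1}\bigl(n\operatorname{supp}(\widehat{\boldsymbol u})\times m\operatorname{supp}(\widehat{\boldsymbol v})\bigr).
\]
Multiplication by the unit \(n\) modulo \(m\) maps \(q\)-cosets to \(q\)-cosets, and likewise for \(m\) modulo \(n\). I will therefore choose \(\boldsymbol u\) in the minimal code of \(C^{(m)}_{n^{-1}a}\) and \(\boldsymbol v\) in that of \(C^{(n)}_{m^{-1}b}\). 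Then \(nC^{(m)}_{n^{-1}a}=C_a^{(m)}\) and \(mC^{(n)}_{m^{-1}b}=C_b^{(n)}\), so \(\operatorname{supp}(\widehat{\boldsymbol x})=\phi^{-1}(C_a^{(m)}\times C_b^{(n)})=C_j^{(mn)}\).

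Two facts then place \(\boldsymbol x\) in the right code. First, \(\boldsymbol x\) lies in \(\mathbb F_q^{mn}\) automatically. Second, the decomposition of Lemma \ref{Lem03} together with Lemma \ref{Lem2} shows that a vector whose DFT support is exactly \(C_j^{(mn)}\) lies in the corresponding \(W\). So \(\boldsymbol x\) is a full-weight codeword of that minimal code.

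The cases split according to whether \(a\) or \(b\) is zero; \(a=b=0\) cannot occur because \(j\neq0\).
\begin{itemize}
\item If \(a\neq0\) and \(b\neq0\), both \(\boldsymbol u\) and \(\boldsymbol v\) exist with full weight by the hypotheses \(h_q(m)=h_q(n)=0\), via Lemma \ref{lem:vanishing-full-weight}\(\mathrm{(iii)}\).
\item If \(b=0\) and \(a\neq0\), take \(\boldsymbol v=\boldsymbol 1_n\). It has full weight and DFT support \(\{0\}=C_0^{(n)}\), and Lemma \ref{lem:crt-coset-product} still applies since \(\lvert C_0\rvert=1\).
\item The case \(a=0\), \(b\neq0\) is symmetric, with \(\boldsymbol u=\boldsymbol 1_m\).
\end{itemize}
Once every nonzero coset of modulus \(mn\) has a full-weight codeword in its minimal code, Lemma \ref{lem:vanishing-full-weight} yields \(h_q(mn)=0\).

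I expect the main obstacle to be the twist by \(n^{-1}\) and \(m^{-1}\) in matching supports. It is easy to mistakenly identify \(\operatorname{supp}(\widehat{\boldsymbol x})\) with \(\phi^{-1}(\operatorname{supp}(\widehat{\boldsymbol u})\times\operatorname{supp}(\widehat{\boldsymbol v}))\). I would double-check the formula \(\widehat{\boldsymbol x}_k=\widehat{\boldsymbol u}_{kt\bmod m}\widehat{\boldsymbol v}_{ks\bmod n}\) against the relation \(sm+tn=1\) before fixing the choice of cosets.
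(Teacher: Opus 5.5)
Your proposal is correct and follows essentially the same route as the paper. The paper likewise reduces to Lemma \ref{lem:vanishing-full-weight}\(\mathrm{(iii)}\), factors the coset via Lemma \ref{lem:crt-coset-product}, and builds \(\boldsymbol{x}=\rho_2^{-1}(\boldsymbol u\otimes\boldsymbol v)\), using \(\boldsymbol 1_m\) or \(\boldsymbol 1_n\) in the degenerate cases. Your twist by \(n^{-1}\bmod m\) and \(m^{-1}\bmod n\) is exactly the paper's choice of \(W^{(m)}_{ta}\) and \(W^{(n)}_{sb}\), since \(sm+tn=1\) gives \(t\equiv n^{-1}\pmod m\) and \(s\equiv m^{-1}\pmod n\).
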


\begin{proof}
For a residue \(c\) modulo an integer \(N\) coprime to \(q\), write \(W_c^{(N)}\) for the minimal cyclic code whose DFT support is \(C_c^{(N)}\). By Lemma \ref{lem:vanishing-full-weight}, it suffices to show that \(W_k^{(mn)}\) contains a full-weight codeword for every nonzero \(q\)-cyclotomic coset \(C_k^{(mn)}\).

Fix \(k\not\equiv0\pmod{mn}\) and write \(\phi(k)=(a,b)\). Since \(\phi\) is injective, \((a,b)\ne(0,0)\). The orbit lengths
\(\lvert C_a^{(m)}\rvert\) and \(\lvert C_b^{(n)}\rvert\) divide \(d_m\) and \(d_n\), respectively. Hence
\[
\gcd\bigl(\lvert C_a^{(m)}\rvert,\lvert C_b^{(n)}\rvert\bigr)=1,
\]
and Lemma \ref{lem:crt-coset-product} gives
\[
\phi(C_k^{(mn)})=C_a^{(m)}\times C_b^{(n)}.
\]

Choose integers \(s,t\) satisfying \(sm+tn=1\). Reducing this identity modulo \(m\) and modulo \(n\) shows that \(t\) is invertible modulo \(m\) and \(s\) is invertible modulo \(n\). In particular,
\[
a\ne0\Longrightarrow ta\ne0\pmod m,\qquad
b\ne0\Longrightarrow sb\ne0\pmod n.
\]
If \(a\ne0\), Lemma \ref{lem:vanishing-full-weight} applied to \(m\) provides a full-weight word
\(\boldsymbol{u}\in W_{ta}^{(m)}\); if \(a=0\), take
\(\boldsymbol{u}=\boldsymbol{1}_m\in W_0^{(m)}\). Similarly, if \(b\ne0\), choose a full-weight word
\(\boldsymbol{v}\in W_{sb}^{(n)}\), and if \(b=0\), take
\(\boldsymbol{v}=\boldsymbol{1}_n\in W_0^{(n)}\).

Set
\[
\boldsymbol{x}=\rho_2^{-1}(\boldsymbol{u}\otimes\boldsymbol{v})
\in\mathbb F_q^{mn}.
\]
The component formula in the proof of Lemma \ref{Lem5.4} gives
\[
x_r=u_{r\bmod m}v_{r\bmod n}\qquad(0\le r<mn).
\]
Every coordinate of \(\boldsymbol{u}\) and \(\boldsymbol{v}\) is nonzero, so every coordinate of \(\boldsymbol{x}\) is nonzero and
\[
\operatorname{wt}(\boldsymbol{x})=mn.
\]

Both chosen words are nonzero words in their indicated minimal cyclic codes, so Lemma \ref{Lem2} gives
\[
\operatorname{supp}(\widehat{\boldsymbol{u}})=C_{ta}^{(m)},
\qquad
\operatorname{supp}(\widehat{\boldsymbol{v}})=C_{sb}^{(n)}.
\]
By Lemma \ref{Lem5.4},
\[
\operatorname{supp}(\widehat{\boldsymbol{x}})
=\{An+Bm\bmod mn:
A\in C_{ta}^{(m)},\ B\in C_{sb}^{(n)}\}.
\]
Since \(nt\equiv1\pmod m\) and \(ms\equiv1\pmod n\), applying \(\phi\) to this support gives
\[
\phi\bigl(\operatorname{supp}(\widehat{\boldsymbol{x}})\bigr)
=C_a^{(m)}\times C_b^{(n)}
=\phi(C_k^{(mn)}).
\]
Because \(\phi\) is bijective,
\[
\operatorname{supp}(\widehat{\boldsymbol{x}})=C_k^{(mn)}.
\]
Finally, decompose \(\boldsymbol{x}\) using Lemma \ref{Lem03}. Lemma \ref{Lem2} shows that the nonzero components in that direct sum have mutually disjoint DFT supports equal to their corresponding cyclotomic cosets. Since the support of \(\widehat{\boldsymbol{x}}\) is the single coset \(C_k^{(mn)}\), all components except the one in \(W_k^{(mn)}\) vanish. Thus
\(\boldsymbol{x}\in W_k^{(mn)}\). We have constructed a full-weight word in every minimal cyclic code associated with a nonzero coset, so Lemma \ref{lem:vanishing-full-weight} yields \(h_q(mn)=0\).
\end{proof}

\begin{rem}\label{rem:order-coprimality-essential}
The multiplicative-order hypothesis in Theorem
\ref{thm:coprime-product-vanishing} cannot be omitted. Indeed,
\[
h_5(3)=0,\qquad h_5(8)=0,\qquad h_5(24)=1.
\]
Here \(h_5(3)=0\) follows from the upper bound in Lemma \ref{lem1.1} \(\mathrm{(iii)}\), and \(h_5(8)=0\) is recorded in Table \ref{tab:h5-values}; on the other hand, \(24=5^2-1\), so Lemma \ref{lem1.1} \(\mathrm{(iv)}\) gives \(h_5(24)=1\). Although \(\gcd(3,8)=1\), we have
\[
\operatorname{ord}_3(5)=\operatorname{ord}_8(5)=2,
\]
so the two orders are not coprime.
\end{rem}

\begin{exmp}\label{ex:vanishing-66}
Take \(q=5\), \(m=6\), and \(n=11\). Then
\[
\gcd(5,66)=\gcd(6,11)=1,\qquad
\operatorname{ord}_6(5)=2,\qquad
\operatorname{ord}_{11}(5)=5.
\]
Moreover, \(h_5(6)=0\) by Table \ref{tab:h5-values}, and
\(h_5(11)=0\) by Example \ref{3-2}. Since \(\gcd(2,5)=1\), Theorem
\ref{thm:coprime-product-vanishing} gives
\[
h_5(66)=0.
\]
\end{exmp}

\begin{coro}\label{cor:multifactor-vanishing}
Let \(n_1,\dots,n_r\ge2\) be pairwise coprime, let
\[
N=\prod_{i=1}^r n_i,\qquad \gcd(q,N)=1,
\]
and set \(d_i=\operatorname{ord}_{n_i}(q)\). If
\[
h_q(n_i)=0\quad(1\le i\le r),\qquad
\gcd(d_i,d_j)=1\quad(i\ne j),
\]
then \(h_q(N)=0\).
\end{coro}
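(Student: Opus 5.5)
The plan is induction on \(r\), with Theorem \ref{thm:coprime-product-vanishing} as the inductive step. For \(r=1\) the statement is just the hypothesis \(h_q(n_1)=0\). The case \(r=2\) is exactly Theorem \ref{thm:coprime-product-vanishing}.

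For the inductive step, assume the claim for \(r-1\) factors. Set \(N'=\prod_{i=1}^{r-1}n_i\). By pairwise coprimality and \(\gcd(q,N)=1\), the factors \(n_1,\dots,n_{r-1}\) satisfy all hypotheses, so the induction hypothesis gives \(h_q(N')=0\). We then apply Theorem \ref{thm:coprime-product-vanishing} to the pair \(m=N'\) and \(n=n_r\). Both are at least \(2\), \(\gcd(N',n_r)=1\) by pairwise coprimality, \(\gcd(q,N'n_r)=1\), and \(h_q(N')=h_q(n_r)=0\).

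The only remaining hypothesis is \(\gcd(\operatorname{ord}_{N'}(q),d_r)=1\), and this is the one step needing an argument. By the Chinese remainder theorem, \(q^e\equiv1\pmod{N'}\) holds iff \(q^e\equiv1\pmod{n_i}\) for each \(i\le r-1\). Hence
\[
\operatorname{ord}_{N'}(q)=\operatorname{lcm}(d_1,\dots,d_{r-1}).
\]
Any prime dividing both this lcm and \(d_r\) would divide some \(d_i\) with \(i<r\) as well as \(d_r\). That contradicts \(\gcd(d_i,d_r)=1\). So the orders are coprime, and Theorem \ref{thm:coprime-product-vanishing} yields \(h_q(N)=h_q(N'n_r)=0\), completing the induction.
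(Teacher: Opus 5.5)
Your proof is correct and follows the paper's argument: induction on \(r\), with Theorem \ref{thm:coprime-product-vanishing} as the inductive step, using \(\operatorname{ord}_{N'}(q)=\operatorname{lcm}(d_1,\dots,d_{r-1})\) to check the order-coprimality hypothesis. The only cosmetic difference is that the paper also notes this lcm equals \(\prod_{i<r}d_i\), whereas you deduce coprimality with \(d_r\) directly from prime divisors, which is equally valid.
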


\begin{proof}
Proceed by induction on \(r\). The case \(r=2\) is Theorem
\ref{thm:coprime-product-vanishing}. Suppose the result holds for \(r-1\) factors and put \(M=\prod_{i=1}^{r-1}n_i\). Then \(h_q(M)=0\), and pairwise coprimality of the \(n_i\) gives
\[
\operatorname{ord}_M(q)
=\operatorname{lcm}(d_1,\dots,d_{r-1})
=\prod_{i=1}^{r-1}d_i.
\]
This order is coprime to \(d_r\). Also \(\gcd(M,n_r)=1\), so one more application of Theorem \ref{thm:coprime-product-vanishing} yields
\(h_q(Mn_r)=h_q(N)=0\).
\end{proof}

\begin{exmp}\label{ex:vanishing-2717}
For \(q=5\), take
\[
n_1=11,\qquad n_2=13,\qquad n_3=19.
\]
The factors are pairwise coprime. Example \ref{3-2} gives \(h_5(11)=0\), while Table \ref{tab:h5-values} gives
\(h_5(13)=h_5(19)=0\). Furthermore,
\[
\operatorname{ord}_{11}(5)=5,\qquad
\operatorname{ord}_{13}(5)=4,\qquad
\operatorname{ord}_{19}(5)=9,
\]
and these three orders are pairwise coprime. Corollary
\ref{cor:multifactor-vanishing} therefore gives
\[
h_5(11\cdot13\cdot19)=h_5(2717)=0.
\]
\end{exmp}

\begin{coro}\label{cor:vanishing-product-families}
Let \(q\) be an odd prime power.

\(\mathrm{(i)}\) Let \(n,r\) be positive integers. If
\[
r\mid q-1,\qquad \gcd(rq,n)=1,\qquad h_q(n)=0,
\]
then \(h_q(rn)=0\). In particular, if \(\gcd(2q,n)=1\) and
\(h_q(n)=0\), then \(h_q(2n)=0\).

\(\mathrm{(ii)}\) Let \(n\) be a positive integer such that
\[
\gcd(q(q+1),n)=1,\qquad h_q(n)=0.
\]
Suppose in addition that either \(n=1\), or \(n\ge2\) and
\(\operatorname{ord}_n(q)\) is odd.
Then \(h_q((q+1)n)=0\).
\end{coro}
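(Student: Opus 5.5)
The plan is to derive both parts directly from Theorem \ref{thm:coprime-product-vanishing}. In each part we take $m$ to be the small factor ($r$ or $q+1$) and check the four hypotheses: coprimality with $q$, mutual coprimality, vanishing of $h_q$, and coprimality of the multiplicative orders. Degenerate cases where a factor equals $1$ are handled separately, since the theorem requires $m,n\ge2$.

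For (i), if $r=1$ there is nothing to prove. Assume $r\ge2$. Since $r\mid q-1$ we have $\gcd(q,r)=1$, and together with $\gcd(rq,n)=1$ this gives $\gcd(q,rn)=1$ and $\gcd(r,n)=1$. Next, $r\le q-1<q$, so Lemma \ref{lem1.1} (iii) gives $h_q(r)\le\lfloor\log_q r\rfloor=0$, hence $h_q(r)=0$. (Alternatively, Lemma \ref{lem1.2} (iii) with $d=0$ gives the same.) If $n=1$, then $h_q(rn)=h_q(r)=0$ and we are done. Otherwise $n\ge2$. Since $q\equiv1\pmod r$, we have $\operatorname{ord}_r(q)=1$, which is coprime to $\operatorname{ord}_n(q)$. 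Theorem \ref{thm:coprime-product-vanishing} then yields $h_q(rn)=0$. For the ``in particular'' statement, $q$ odd gives $2\mid q-1$, and $\gcd(2q,n)=1$ is exactly the hypothesis $\gcd(rq,n)=1$ with $r=2$.

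For (ii), set $m=q+1$, so $m\ge4$ because $q\ge3$. Since $q\equiv-1\pmod{q+1}$ and $-1\not\equiv1$ modulo $q+1\ge4$, we get $\operatorname{ord}_{q+1}(q)=2$. We have $\gcd(q,q+1)=1$, and $\gcd(q(q+1),n)=1$ gives $\gcd(q,(q+1)n)=1$ and $\gcd(q+1,n)=1$. Lemma \ref{lem1.3} (i) with $d=0$ applies because $q$ is a power of an odd prime, and it gives $h_q(q+1)=0$. If $n=1$, the conclusion is exactly this statement. If $n\ge2$, then $\operatorname{ord}_n(q)$ is odd by hypothesis, so $\gcd(2,\operatorname{ord}_n(q))=1$. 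Theorem \ref{thm:coprime-product-vanishing} with $m=q+1$ then gives $h_q((q+1)n)=0$.

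There is no serious obstacle here. The only points needing care are the degenerate cases $r=1$ or $n=1$, which fall outside the range $m,n\ge2$ of the theorem, and confirming that $\operatorname{ord}_{q+1}(q)$ is exactly $2$ rather than $1$, which uses $q+1>2$.
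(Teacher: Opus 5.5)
Your proposal is correct and matches the paper's proof step for step. Both parts reduce to Theorem \ref{thm:coprime-product-vanishing}, where the small factor has order $1$ (resp.\ $2$). In both, $h_q(r)=0$ comes from the bound $\lfloor\log_q r\rfloor=0$, $h_q(q+1)=0$ comes from Lemma \ref{lem1.3} (i) with $d=0$, and the degenerate cases $r=1$ or $n=1$ are handled separately. The only difference is cosmetic: for $h_q(r)=0$ you cite Lemma \ref{lem1.2} (iii) as the alternative justification, where the paper cites Lemma \ref{lem1.1} (vii).
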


\begin{proof}
\(\mathrm{(i)}\) If \(r=1\), the conclusion is the hypothesis \(h_q(n)=0\). Since \(r\mid q-1\), the upper bound in Lemma \ref{lem1.1} \(\mathrm{(iii)}\) gives
\[
0\le h_q(r)\le\lfloor\log_q r\rfloor=0.
\]
This is also the characteristic-prime exponent \(0\) specialization of Lemma \ref{lem1.1} \(\mathrm{(vii)}\). Thus, if \(n=1\), the conclusion already follows. Now assume \(r,n\ge2\). We have \(h_q(r)=0\) as above, and
\[
\operatorname{ord}_r(q)=1.
\]
The hypotheses give \(\gcd(r,n)=\gcd(q,rn)=1\), and
\(\gcd(1,\operatorname{ord}_n(q))=1\). Theorem
\ref{thm:coprime-product-vanishing} therefore yields \(h_q(rn)=0\).

\(\mathrm{(ii)}\) If \(n=1\), Lemma \ref{lem1.3} \(\mathrm{(i)}\), with \(d=0\), gives \(h_q(q+1)=0\). Suppose \(n\ge2\). Again by Lemma \ref{lem1.3} \(\mathrm{(i)}\) with \(d=0\),
\[
h_q(q+1)=0.
\]
Since \(q\equiv-1\pmod{q+1}\) and \(q\not\equiv1\pmod{q+1}\),
\(\operatorname{ord}_{q+1}(q)=2\). This order is coprime to the odd integer \(\operatorname{ord}_n(q)\). The remaining coprimality assumptions are part of the hypotheses, so Theorem
\ref{thm:coprime-product-vanishing} gives \(h_q((q+1)n)=0\).
\end{proof}
\endgroup

\section{Galois Descent Method for \(h_{q^m}(n)\leq h_q(n)\)}
In existing literature, studies on cyclically covering subspaces over finite fields are all conducted with a fixed base field \(\mathbb{F}_q\). In this section, we employ the Galois descent method for vector spaces over finite fields to address the case of distinct base fields. Specifically, we establish the inequality \(h_{q^m}(n)\leq h_q(n)\), where $q$ is a prime power, and \(m, n\) are arbitrary positive integers. We first present some preliminaries.

Let the Galois group be \(G = \operatorname{Gal}(\mathbb{F}_{q^m}/\mathbb{F}_q) = \langle \sigma \rangle\), where the Frobenius automorphism \(\sigma\) is given by \(\sigma(x) = x^q\) and satisfies \(\sigma^m = \mathrm{id}\). For any vector \(\boldsymbol{x} = (x_0, x_1, \dots, x_{n-1}) \in \mathbb{F}_{q^m}^n\), the Galois action is defined componentwise as
\(\sigma(\boldsymbol{x}) = \big(\sigma(x_0), \sigma(x_1), \dots, \sigma(x_{n-1})\big).\) For a subspace $U$ of \(\mathbb{F}_{q^m}^n\), the orthogonal complement is defined as
\(U^\perp = \big\{\boldsymbol{x} \in \mathbb{F}_{q^m}^n : (\boldsymbol{x}, \boldsymbol{u}) = 0 \text{ for all } \boldsymbol{u} \in U\big\}.\)

\begin{lem}\label{lem5.1}
(1) For any \(\boldsymbol{x} \in \mathbb{F}_{q^m}^n\), we have \(\tau(\sigma(\boldsymbol{x})) = \sigma(\tau(\boldsymbol{x}))\).

(2) For all \(\boldsymbol{x}, \boldsymbol{y} \in \mathbb{F}_{q^m}^n\), it holds that \((\sigma(\boldsymbol{x}), \sigma(\boldsymbol{y})) = \sigma\big((\boldsymbol{x}, \boldsymbol{y})\big)\).

(3) For any subspace \(U \subseteq \mathbb{F}_{q^m}^n\) and any integer $i$, the equality \((\tau^i U)^\perp = \tau^i (U^\perp)\) holds.

(4) For any $\mathbb{F}_q$-subspace \(U_0 \subseteq \mathbb{F}_q^n\), we have
\(\big(U_0 \otimes_{\mathbb{F}_q} \mathbb{F}_{q^m}\big)^\perp = U_0^\perp \otimes_{\mathbb{F}_q} \mathbb{F}_{q^m}.\)
\end{lem}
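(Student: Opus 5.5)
The four parts are elementary verifications, and I will do them in order, since (3) uses the bilinear form in the same way as (2), and (4) is a dimension count.

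For (1), compare coordinates. We have $\tau(\sigma(\boldsymbol{x}))_j=\sigma(x_{j-1})=\sigma(\tau(\boldsymbol{x}))_j$, because $\tau$ only permutes coordinates while $\sigma$ acts on each coordinate separately.

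For (2), the map $\sigma$ is a field automorphism, so it is additive and multiplicative. Hence
\[
\sigma\Big(\sum_i x_iy_i\Big)=\sum_i\sigma(x_i)\sigma(y_i).
\]

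For (3), the key observation is that $\tau$ is a coordinate permutation, so it preserves the standard inner product: $(\tau\boldsymbol{x},\tau\boldsymbol{y})=(\boldsymbol{x},\boldsymbol{y})$. Now take $\boldsymbol{x}\in\tau^i(U^\perp)$ and write $\boldsymbol{x}=\tau^i\boldsymbol{z}$ with $\boldsymbol{z}\in U^\perp$. For any $\boldsymbol{u}\in U$,
\[
(\boldsymbol{x},\tau^i\boldsymbol{u})=(\boldsymbol{z},\boldsymbol{u})=0.
\]
This gives $\tau^i(U^\perp)\subseteq(\tau^iU)^\perp$. Equality then follows because both sides have dimension $n-\dim U$: $\tau^i$ is invertible, and the standard form on $\mathbb{F}_{q^m}^n$ is nondegenerate. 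Alternatively, one can apply the same inclusion with $\tau^{-i}$.

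For (4), identify $U_0\otimes_{\mathbb{F}_q}\mathbb{F}_{q^m}$ with the $\mathbb{F}_{q^m}$-span of $U_0$ inside $\mathbb{F}_{q^m}^n$. This identification is legitimate because an $\mathbb{F}_q$-basis of $U_0$ stays linearly independent over $\mathbb{F}_{q^m}$: a basis of $\mathbb{F}_q^n$ remains a basis of $\mathbb{F}_{q^m}^n$.

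For the inclusion $\supseteq$, take $\boldsymbol{w}\in U_0^\perp$ (the orthogonal complement taken in $\mathbb{F}_q^n$) and $\boldsymbol{u}\in U_0$. Then $(\boldsymbol{w},\boldsymbol{u})=0$, and by $\mathbb{F}_{q^m}$-bilinearity every $\mathbb{F}_{q^m}$-combination of such $\boldsymbol{w}$ is orthogonal to every $\mathbb{F}_{q^m}$-combination of such $\boldsymbol{u}$.

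For equality, compare dimensions over $\mathbb{F}_{q^m}$. The left side has dimension
\[
n-\dim_{\mathbb{F}_{q^m}}\big(U_0\otimes\mathbb{F}_{q^m}\big)=n-\dim_{\mathbb{F}_q}U_0,
\]
and the right side has dimension $\dim_{\mathbb{F}_q}U_0^\perp=n-\dim_{\mathbb{F}_q}U_0$. Both equalities use nondegeneracy of the standard form, over $\mathbb{F}_{q^m}$ and over $\mathbb{F}_q$ respectively.

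The only point needing care is this dimension-preservation under scalar extension in (4), and the remark about linear independence above settles it.
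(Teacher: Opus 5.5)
Your proposal is correct and follows essentially the same route as the paper: (1) and (2) by direct coordinatewise verification, (3) via invariance of the standard inner product under the coordinate permutation $\tau^i$, and (4) by the bilinearity inclusion followed by a dimension count. The only differences are cosmetic. In (3) the paper writes a single chain of equivalences using $(\boldsymbol{v},\tau^i\boldsymbol{u})=(\tau^{-i}\boldsymbol{v},\boldsymbol{u})$, whereas you prove one inclusion and close by dimension or by applying it to $\tau^{-i}$. In (4) your remark that an $\mathbb{F}_q$-basis of $U_0$ stays $\mathbb{F}_{q^m}$-linearly independent justifies $\dim_{\mathbb{F}_{q^m}}(U_0\otimes_{\mathbb{F}_q}\mathbb{F}_{q^m})=\dim_{\mathbb{F}_q}U_0$, which the paper simply asserts.
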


\begin{proof}
These four properties can be verified directly; we only supply proofs for the last two. For Property (3), we have
\begin{align*}
\boldsymbol{v} \in (\tau^i U)^\perp
&\iff (\boldsymbol{v}, \tau^i \boldsymbol{u}) = 0,\ \forall \boldsymbol{u} \in U \\
&\iff (\tau^{-i} \boldsymbol{v}, \boldsymbol{u}) = 0,\ \forall \boldsymbol{u} \in U \\
&\iff \tau^{-i} \boldsymbol{v} \in U^\perp \\
&\iff \boldsymbol{v} \in \tau^i(U^\perp).
\end{align*}

Next, we establish Property (4). Let \(U_0 \subseteq \mathbb{F}_q^n\) be an \(\mathbb{F}_q\)-linear subspace. We regard \(U_0 \otimes_{\mathbb{F}_q} \mathbb{F}_{q^m}\) canonically as the \(\mathbb{F}_{q^m}\)-linear subspace of \(\mathbb{F}_{q^m}^n\) spanned by \(U_0\).
We first establish the inclusion relation. If \(\boldsymbol{v} \in U_0^\perp\), then \((\boldsymbol{u},\boldsymbol{v})=0\) holds for all \(\boldsymbol{u} \in U_0\). Accordingly, for arbitrary \(a,b \in \mathbb{F}_{q^m}\), we have \((a\boldsymbol{u},b\boldsymbol{v})=ab(\boldsymbol{u},\boldsymbol{v})=0\), which implies
\[U_0^\perp \otimes_{\mathbb{F}_q} \mathbb{F}_{q^m} \subseteq \left(U_0 \otimes_{\mathbb{F}_q} \mathbb{F}_{q^m}\right)^\perp.\]

We then compare the dimensions of both sides. Let \(\dim_{\mathbb{F}_q} U_0 = r\). It follows that \(\dim_{\mathbb{F}_{q^m}} \left(U_0 \otimes_{\mathbb{F}_q} \mathbb{F}_{q^m}\right) = r\), so \(\dim_{\mathbb{F}_{q^m}} \left(U_0 \otimes_{\mathbb{F}_q} \mathbb{F}_{q^m}\right)^\perp = n-r\). On the other hand, \(\dim_{\mathbb{F}_q} U_0^\perp = n-r\), hence \(\dim_{\mathbb{F}_{q^m}} \left(U_0^\perp \otimes_{\mathbb{F}_q} \mathbb{F}_{q^m}\right) = n-r\). Since we have derived the inclusion above and the two \(\mathbb{F}_{q^m}\)-linear subspaces share identical dimensions, then
\[\left(U_0 \otimes_{\mathbb{F}_q} \mathbb{F}_{q^m}\right)^\perp = U_0^\perp \otimes_{\mathbb{F}_q} \mathbb{F}_{q^m}.\]
\end{proof}

\begin{lem}[\bf{Galois Descent}]\cite{Bourbaki}\label{lem5.2}
Let \(U \subseteq \mathbb{F}_{q^m}^n\) be an $\mathbb{F}_{q^m}$-linear subspace. Then the following two statements are equivalent:

(1) $U$ is $G$-invariant, i.e., \(\sigma(U) = U\);

(2) There exists a unique $\mathbb{F}_q$-subspace \(U_0 \subseteq \mathbb{F}_q^n\) such that \(U = U_0 \otimes_{\mathbb{F}_q} \mathbb{F}_{q^m}\).

In this case, we have
\[\dim_{\mathbb{F}_q} U_0 = \dim_{\mathbb{F}_{q^m}} U,\]
and
\[U_0 = U^G = \big\{\boldsymbol{u} \in U : \sigma(\boldsymbol{u}) = \boldsymbol{u}\big\} = U \cap \mathbb{F}_q^n.\]
\end{lem}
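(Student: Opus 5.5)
The statement to prove is the Galois descent lemma (Lemma \ref{lem5.2}), which is standard. I will prove the two implications directly, using the Frobenius action $\sigma$ defined componentwise on $\mathbb{F}_{q^m}^n$.

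The implication (2)$\Rightarrow$(1) is immediate. If $U=U_0\otimes_{\mathbb{F}_q}\mathbb{F}_{q^m}$, then every $\boldsymbol u\in U$ has the form $\sum_i a_i\boldsymbol u_i$ with $\boldsymbol u_i\in U_0\subseteq\mathbb{F}_q^n$ and $a_i\in\mathbb{F}_{q^m}$. Hence
\[
\sigma(\boldsymbol u)=\sum_i \sigma(a_i)\boldsymbol u_i\in U .
\]
This gives $\sigma(U)\subseteq U$, and equality follows since $\sigma$ is bijective.

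For (1)$\Rightarrow$(2), I set $U_0=U\cap\mathbb{F}_q^n=U^G$, which is clearly an $\mathbb{F}_q$-subspace. The inclusion $U_0\otimes\mathbb{F}_{q^m}\subseteq U$ is clear. The main step is the reverse inclusion, that $U$ is spanned over $\mathbb{F}_{q^m}$ by $G$-fixed vectors. I would use the trace. Pick $\theta\in\mathbb{F}_{q^m}$ and define
\[
T_\theta(\boldsymbol u)=\sum_{j=0}^{m-1}\sigma^j(\theta\boldsymbol u).
\]
By $G$-invariance, $T_\theta(\boldsymbol u)\in U$, and $\sigma(T_\theta(\boldsymbol u))=T_\theta(\boldsymbol u)$, so $T_\theta(\boldsymbol u)\in U_0$.

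Now let $\theta_1,\dots,\theta_m$ be an $\mathbb{F}_q$-basis of $\mathbb{F}_{q^m}$. The matrix $(\sigma^j(\theta_i))_{i,j}$ is invertible; this is the Moore matrix, equivalently Dedekind independence of characters. So one can solve for each coordinate vector: for every $k$ there are coefficients $c_{ik}\in\mathbb{F}_{q^m}$ with $\sum_i c_{ik}\sigma^j(\theta_i)=\delta_{jk}$. Taking $k=0$ gives
\[
\boldsymbol u=\sum_i c_{i0}\,T_{\theta_i}(\boldsymbol u)\in U_0\otimes\mathbb{F}_{q^m}.
\]

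The remaining claims follow from one more fact. I will show that $\mathbb{F}_q$-linearly independent vectors of $\mathbb{F}_q^n$ remain $\mathbb{F}_{q^m}$-linearly independent. One way is through a nonzero minor of their coordinate matrix, whose determinant is computed over $\mathbb{F}_q$; another is to extend to an $\mathbb{F}_q$-basis of $\mathbb{F}_q^n$, which is also an $\mathbb{F}_{q^m}$-basis of $\mathbb{F}_{q^m}^n$. This yields
\[
\dim_{\mathbb{F}_{q^m}}\bigl(U_0\otimes\mathbb{F}_{q^m}\bigr)=\dim_{\mathbb{F}_q}U_0 .
\]

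For uniqueness, suppose $U=U_0'\otimes\mathbb{F}_{q^m}$ for some $\mathbb{F}_q$-subspace $U_0'$. Then $U_0'\subseteq U\cap\mathbb{F}_q^n=U_0$. Both spaces have $\mathbb{F}_q$-dimension equal to $\dim_{\mathbb{F}_{q^m}}U$, so $U_0'=U_0$. This identifies $U_0=U^G=U\cap\mathbb{F}_q^n$, where the equality $U^G=U\cap\mathbb{F}_q^n$ holds because the fixed field of $\sigma$ is $\mathbb{F}_q$.

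The main obstacle is the spanning step, specifically the invertibility of the matrix $(\sigma^j(\theta_i))$. I would justify it by Artin's independence of distinct characters $\sigma^j$ restricted to $\mathbb{F}_{q^m}^\times$. If that were inconvenient, a dimension-count alternative is available: view $\mathbb{F}_{q^m}^n$ as a $G$-module and apply Hilbert 90 ($H^1(G,\mathrm{GL}_d)=1$). I would present the trace argument as the primary route.
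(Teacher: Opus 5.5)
Your proof is correct and complete. The paper gives no argument for this lemma at all; it simply defers to Bourbaki.

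The key steps all check out:
\begin{itemize}
\item $T_\theta(\boldsymbol u)$ lies in $U$ because $\theta\boldsymbol u\in U$ and $\sigma^j(U)=U$, and it is $\sigma$-fixed because $\sigma^m=\mathrm{id}$.
\item The Moore matrix $(\sigma^j(\theta_i))$ is invertible by Artin's independence of $\sigma^0,\dots,\sigma^{m-1}$. Then $\sum_i c_{i0}T_{\theta_i}(\boldsymbol u)=\sum_j\bigl(\sum_i c_{i0}\sigma^j(\theta_i)\bigr)\sigma^j(\boldsymbol u)=\boldsymbol u$.
\item Preservation of linear independence under scalar extension gives both the dimension equality and uniqueness.
\end{itemize}

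What your version buys over the bare citation is self-containedness. In particular, your independence step is exactly what justifies identifying the abstract tensor product $U_0\otimes_{\mathbb{F}_q}\mathbb{F}_{q^m}$ with the $\mathbb{F}_{q^m}$-span of $U_0$. The paper uses that identification without comment in Lemma \ref{lem5.1}(4) and in the proof of Theorem \ref{thm5.1}. Your Hilbert 90 fallback is unnecessary.

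If you want a route that avoids the Moore determinant entirely, use reduced row echelon form. $U$ has a unique basis in reduced row echelon form. Applying $\sigma$ fixes the zeros and the pivot ones, so it carries this basis to a reduced row echelon basis of $\sigma(U)=U$. By uniqueness every basis vector is $\sigma$-fixed and hence lies in $\mathbb{F}_q^n$. This gives the existence of $U_0$ and $\dim_{\mathbb{F}_q}U_0=\dim_{\mathbb{F}_{q^m}}U$ in one stroke.
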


\begin{proof}
This is a standard result; we refer the reader to \cite{Bourbaki}.
\end{proof}

With the aid of Galois descent, we obtain the following theorem.
\begin{thm}\label{thm5.1}
Let $q$ be a prime power, and let \(m, n\) be arbitrary positive integers. Then we have
\[h_{q^m}(n) \le h_q(n).\]
\end{thm}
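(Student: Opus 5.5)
We prove the inequality by \emph{descending} an optimal cyclically covering subspace over $\mathbb{F}_{q^m}$ to one over $\mathbb{F}_q$ whose codimension is at least as large. Let $k=h_{q^m}(n)$ and let $U\subseteq\mathbb{F}_{q^m}^n$ be an $\mathbb{F}_{q^m}$-subspace of codimension $k$ with $\bigcup_{i}\tau^i(U)=\mathbb{F}_{q^m}^n$.

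\textbf{Step 1: make $U$ Galois-stable.} Since $U$ need not be $G$-invariant, replace it by
\[
U'=\bigcap_{j=0}^{m-1}\sigma^j(U).
\]
Each $\sigma^j(U)$ is again an $\mathbb{F}_{q^m}$-subspace, because $\sigma^j(a\boldsymbol{u})=\sigma^j(a)\sigma^j(\boldsymbol{u})$ and $\sigma^j$ is bijective on $\mathbb{F}_{q^m}$. Hence $U'$ is an $\mathbb{F}_{q^m}$-subspace. Since $\sigma^m=\mathrm{id}$, the map $\sigma$ permutes the conjugates $\sigma^j(U)$, so $\sigma(U')=U'$. As $U'\subseteq U$, we have $\operatorname{codim}U'\ge k$.

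\textbf{Step 2: descend.} By Lemma \ref{lem5.2}, $U'=U_0\otimes_{\mathbb{F}_q}\mathbb{F}_{q^m}$ with $U_0=U'\cap\mathbb{F}_q^n$ and $\dim_{\mathbb{F}_q}U_0=\dim_{\mathbb{F}_{q^m}}U'$. Therefore
\[
\operatorname{codim}_{\mathbb{F}_q}U_0=\operatorname{codim}_{\mathbb{F}_{q^m}}U'\ge k.
\]

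\textbf{Step 3: $U_0$ is cyclically covering in $\mathbb{F}_q^n$.} Take $\boldsymbol{x}\in\mathbb{F}_q^n$. Since $U$ is covering, there is an $i$ with $\boldsymbol{x}\in\tau^i(U)$. Apply $\sigma^j$: we have $\sigma^j(\boldsymbol{x})=\boldsymbol{x}$, and $\sigma$ commutes with $\tau$ by Lemma \ref{lem5.1}(1). This gives $\boldsymbol{x}\in\tau^i(\sigma^j(U))$ for every $j$, hence
\[
\boldsymbol{x}\in\bigcap_j\tau^i(\sigma^j U)=\tau^i(U'),
\]
where the last equality holds because $\tau^i$ is bijective. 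Thus $\tau^{-i}\boldsymbol{x}\in U'$. Since $\tau$ preserves $\mathbb{F}_q^n$, also $\tau^{-i}\boldsymbol{x}\in U'\cap\mathbb{F}_q^n=U_0$, i.e.\ $\boldsymbol{x}\in\tau^i(U_0)$. So $\bigcup_i\tau^i(U_0)=\mathbb{F}_q^n$.

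\textbf{Conclusion.} By the definition of $h_q$ as a maximum, $h_q(n)\ge\operatorname{codim}U_0\ge k=h_{q^m}(n)$.

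The only delicate point is Step 1. One must not descend $U$ directly: $U\cap\mathbb{F}_q^n$ could have much smaller dimension than $U$, and then its codimension is not controlled by Lemma \ref{lem5.2}. The fix is to pass to the Galois-stable core $U'$. This can only increase codimension, which is harmless for a lower bound on $h_q(n)$, and it preserves the covering property on $\mathbb{F}_q$-rational vectors because those vectors are fixed by $\sigma$. Lemma \ref{lem5.1}(3),(4) are not needed for this argument.
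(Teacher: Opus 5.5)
Your proof is correct, and it is the primal form of the paper's argument rather than a genuinely different one. The paper takes $A=U^\perp$, forms the Galois closure $B=\sum_{\sigma\in G}\sigma(A)$, descends $B=B_0\otimes_{\mathbb{F}_q}\mathbb{F}_{q^m}$, and sets $V=B_0^\perp$. Since $\sigma^j(W)^\perp=\sigma^j(W^\perp)$ by Lemma \ref{lem5.1}(2), one gets $B^\perp=\bigcap_j\sigma^j(U)=U'$, and Lemma \ref{lem5.1}(4) then gives $V=B^\perp\cap\mathbb{F}_q^n=U_0$. So both proofs build exactly the same subspace. Working with the Galois-stable core of $U$, instead of the Galois closure of $U^\perp$, just lets you skip orthogonal complements and Lemma \ref{lem5.1}(3),(4).

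Your closing remark, however, has the logic backwards. Any $\boldsymbol{x}\in U\cap\mathbb{F}_q^n$ satisfies $\boldsymbol{x}=\sigma^j(\boldsymbol{x})\in\sigma^j(U)$ for every $j$, so in fact $U_0=U\cap\mathbb{F}_q^n$. Moreover, a smaller dimension would only raise the codimension, which is what a lower bound on $h_q(n)$ needs. Vectors of $\mathbb{F}_q^n$ that are $\mathbb{F}_q$-linearly independent stay $\mathbb{F}_{q^m}$-linearly independent, because a nonzero maximal minor stays nonzero. Hence $\dim_{\mathbb{F}_q}(U\cap\mathbb{F}_q^n)\le\dim_{\mathbb{F}_{q^m}}U$. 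Also, if $\boldsymbol{x}\in\mathbb{F}_q^n$ lies in $\tau^i(U)$, then $\tau^{-i}\boldsymbol{x}\in U\cap\mathbb{F}_q^n$, so $U\cap\mathbb{F}_q^n$ is already cyclically covering. Step 1 and Galois descent, in your proof and in the paper's, are therefore dispensable: they only compute $\dim_{\mathbb{F}_q}U_0$ exactly, which the inequality does not require.
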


\begin{proof}
Let \(k = h_{q^m}(n)\). By definition, there exists a cyclically covering subspace \(U \subseteq \mathbb{F}_{q^m}^n\) of codimension \(k\). We will construct a cyclically covering subspace of \(\mathbb{F}_q^n\) with codimension at least \(k\), which implies \(h_q(n) \ge k\).

Let \(A = U^\perp \subseteq \mathbb{F}_{q^m}^n\) be the orthogonal complement of \(U\). Then
\[\dim_{\mathbb{F}_{q^m}} A = \operatorname{codim}_{\mathbb{F}_{q^m}} U = k.\]
We define the Galois closure of \(A\) as
\[B = \sum_{\sigma \in G} \sigma(A) = \operatorname{span}_{\mathbb{F}_{q^m}} \big\{\sigma(\boldsymbol{\alpha}) \,\big|\, \boldsymbol{\alpha} \in A,\ \sigma \in G\big\}.\]
Clearly, \(B\) is a \(G\)-invariant subspace. By Galois descent (Lemma \ref{lem5.2}), there exists a unique \(\mathbb{F}_q\)-subspace \(B_0 \subseteq \mathbb{F}_q^n\) such that
\[B = B_0 \otimes_{\mathbb{F}_q} \mathbb{F}_{q^m},\]
and
\[\dim_{\mathbb{F}_q} B_0 = \dim_{\mathbb{F}_{q^m}} B \ge \dim_{\mathbb{F}_{q^m}} A = k.\]
Let \(V = B_0^\perp \subseteq \mathbb{F}_q^n\) denote the orthogonal complement of \(B_0\) in \(\mathbb{F}_q^n\). It follows that
\[\operatorname{codim}_{\mathbb{F}_q} V = \dim_{\mathbb{F}_q} B_0 \ge k.\]

To prove that \(V\) is a cyclically covering subspace of \(\mathbb{F}_q^n\), we need to verify
\[\bigcup_{i=0}^{n-1} \tau^i(V) = \mathbb{F}_q^n.\]
Equivalently, for any \(\boldsymbol{x} \in \mathbb{F}_q^n\), there exists some \(i \in [n]\) satisfying \(\boldsymbol{x} \in \tau^i(V)\).

Since \(\boldsymbol{x} \in \mathbb{F}_q^n \subset \mathbb{F}_{q^m}^n\) and \(U\) is a cyclically covering subspace of \(\mathbb{F}_{q^m}^n\), the definition yields an index \(i \in [n]\) such that \(\boldsymbol{x} \in \tau^i(U)\).
Recall that
\[\tau^i(U) = \tau^i(A^\perp) = \big(\tau^i A\big)^\perp.\]
Hence \((\boldsymbol{x},\, \tau^i \boldsymbol{\alpha}) = 0\) holds for all \(\boldsymbol{\alpha} \in A\).

For any \(\sigma \in G\), we compute:
\[\begin{aligned}
\big(\boldsymbol{x},\, \tau^i \sigma(\boldsymbol{\alpha})\big)= \big(\boldsymbol{x},\, \sigma(\tau^i \boldsymbol{\alpha})\big) 
= \big(\sigma(\boldsymbol{x}),\, \sigma(\tau^i \boldsymbol{\alpha})\big) 
= \sigma\big((\boldsymbol{x},\, \tau^i \boldsymbol{\alpha})\big) 
= \sigma(0) = 0.
\end{aligned}\]
This means that \(\boldsymbol{x}\) is orthogonal to every vector of the form \(\tau^i \sigma(\boldsymbol{\alpha})\).

As \(B = \sum_{\sigma\in G} \sigma(A)\), \(\boldsymbol{x}\) is orthogonal to all elements in \(\tau^i B\), i.e.,
\(\boldsymbol{x} \in \big(\tau^i B\big)^\perp.\)
Moreover,
\[B^\perp = \big(B_0 \otimes_{\mathbb{F}_q} \mathbb{F}_{q^m}\big)^\perp
= B_0^\perp \otimes_{\mathbb{F}_q} \mathbb{F}_{q^m}
= V \otimes_{\mathbb{F}_q} \mathbb{F}_{q^m}.\]
Combining these relations yields
\(\boldsymbol{x} \in \tau^i\big(V \otimes_{\mathbb{F}_q} \mathbb{F}_{q^m}\big).\) Note that \(\boldsymbol{x} \in \mathbb{F}_q^n\) and \(\tau^i(V) \subset \mathbb{F}_q^n\). We then obtain
\[\boldsymbol{x} \in \tau^i\big(V \otimes_{\mathbb{F}_q} \mathbb{F}_{q^m}\big) \cap \mathbb{F}_q^n = \tau^i(V).\]
We have constructed a cyclically covering subspace $V$ of \(\mathbb{F}_q^n\) whose codimension is at least $k$. Consequently,
\[h_q(n) \ge \operatorname{codim}_{\mathbb{F}_q} V \ge k = h_{q^m}(n),\]
which implies the inequality 
\[h_{q^m}(n) \le h_q(n)\]
holds for all cases. This completes the proof of Theorem \ref{thm5.1}.
\end{proof}

\section{Algebraic Interpretations of Other Examples}
In 2019, Cameron, Ellis and Raynaud \cite{Cameron-Ellis-Raynaud} proved the following two results (see Lemma \ref{lem1.1}):
\[h_q(q^d - 1) = d - 1 = \lfloor\log_q(q^d - 1)\rfloor,\]
\[h_q\left(\sum_{r=0}^d q^{kr}\right) = kd,~\text{where} ~\gcd(d + 1, q^k - 1) = 1.\]
In this section, we provide a unified algebraic interpretation of these two results.

\begin{thm}\label{Thm7-1}
Let $q$ be a prime power, $m$ a positive integer, and $h$ a positive divisor of \(q-1\). If \(\gcd\left(m, h\right) = 1\), then
\[h_q\left(\frac{q^m - 1}{h}\right) = m-1=\left\lfloor \log_q\left( \frac{q^m - 1}{h} \right) \right\rfloor.\]
\end{thm}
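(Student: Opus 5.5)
The plan is to prove the two inequalities separately: the upper bound from Lemma \ref{lem1.1} (iii), and the lower bound from an explicit cyclically covering subspace of codimension $m-1$, built by viewing $\mathbb{F}_q^n$ through a linear map to $\mathbb{F}_{q^m}$ that turns $\tau$ into multiplication by a root of unity. Throughout put $n=(q^m-1)/h$.

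\textbf{Upper bound.} Since $h\le q-1$,
\[
q^{m-1}\le \frac{q^m-1}{q-1}\le n<q^m .
\]
Hence $\lfloor\log_q n\rfloor=m-1$, and Lemma \ref{lem1.1} (iii) gives $h_q(n)\le m-1$.

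\textbf{Construction.} Let $\alpha$ be a primitive element of $\mathbb{F}_{q^m}$ and put $\gamma=\alpha^h$, so $\gamma$ has multiplicative order exactly $n$. Since $n>q^{m-1}\ge q^j-1$ for every $j<m$, the element $\gamma$ lies in no proper subfield, and therefore $\mathbb{F}_q(\gamma)=\mathbb{F}_{q^m}$. Define the $\mathbb{F}_q$-linear map
\[
L:\mathbb{F}_q^n\to\mathbb{F}_{q^m},\qquad L(\boldsymbol{x})=\sum_{i=0}^{n-1}x_i\gamma^i .
\]
It is surjective, because $1,\gamma,\dots,\gamma^{m-1}$ already span $\mathbb{F}_{q^m}$ over $\mathbb{F}_q$. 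It also intertwines the shift, $L(\tau\boldsymbol{x})=\gamma L(\boldsymbol{x})$, since $\gamma^n=1$; this is the same mechanism as Lemma \ref{Lem01}, with $\gamma$ in place of $\omega^k$. Set
\[
U=L^{-1}(\mathbb{F}_q).
\]
By the rank--nullity argument used in Theorem \ref{thm4-1}, $\dim U=(n-m)+1$, so $\operatorname{codim}U=m-1$.

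\textbf{Covering property.} Take $\boldsymbol{x}\in\mathbb{F}_q^n$ and write $\beta=L(\boldsymbol{x})$. If $\beta=0$ then $\boldsymbol{x}\in U$. Otherwise, $\boldsymbol{x}\in\tau^i(U)$ holds if and only if $L(\tau^{-i}\boldsymbol{x})=\gamma^{-i}\beta\in\mathbb{F}_q$. So it suffices to show that
\[
\mathbb{F}_{q^m}^\ast=\langle\gamma\rangle\cdot\mathbb{F}_q^\ast .
\]
Now $\mathbb{F}_q^\ast=\langle\alpha^{N}\rangle$ with $N=(q^m-1)/(q-1)$, so the product group is $\langle\alpha^{\gcd(h,N)}\rangle$. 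Because $q\equiv1\pmod{q-1}$, we have $N=1+q+\cdots+q^{m-1}\equiv m\pmod{q-1}$. Since $h\mid q-1$, this gives $\gcd(h,N)=\gcd(h,m)=1$ by hypothesis. Hence the product group is all of $\mathbb{F}_{q^m}^\ast$, $U$ is cyclically covering, and $h_q(n)\ge m-1$.

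\textbf{Main obstacle.} The key step is this last group-theoretic identity. It is exactly where $\gcd(m,h)=1$ enters, and it explains the two earlier special cases: $h=1$ recovers Lemma \ref{lem1.1} (iv), and $h=q-1$ with $\gcd(m,q-1)=1$ recovers the $k=1$ case of Lemma \ref{lem1.1} (vi).
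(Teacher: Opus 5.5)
Your proof is correct and follows essentially the paper's route: the same map $\boldsymbol{x}\mapsto\sum_i x_i\gamma^i$ with $\gamma$ of order $n$, the same subspace $U$ defined as the preimage of $\mathbb{F}_q$, and the same reduction to $\langle\gamma\rangle\cdot\mathbb{F}_q^\ast=\mathbb{F}_{q^m}^\ast$. The paper checks this identity by letting $\mathbb{F}_{q^m}^\ast/H$ act on the projective space $\mathbb{P}$ and computing $|KS|=n$, whereas your $\gcd(h,N)=\gcd(h,m)$ computation does it more directly. (There is one cosmetic slip: $n>q^{m-1}$ fails when $m=1$ and $h=q-1$, since then $n=1$; but in that case there is no proper subfield to avoid and the statement is trivial.)
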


\begin{proof}
Let $q$ be a prime power, $m$ a positive integer, and $h \mid q-1$ such that $\gcd\left(\dfrac{q^m-1}{q-1},\,h\right)=\gcd\left(m, h\right)=1$. Denote
$$n = \frac{q^m-1}{h}.$$

The multiplicative group $\mathbb{F}_{q^m}^\ast$ of the finite field $\mathbb{F}_{q^m}$ is a cyclic group of order $q^m-1$. Let $\gamma$ be a generator of $\mathbb{F}_{q^m}^\ast$.
Define the subgroup $H = \langle \gamma^n \rangle$. From $n = \dfrac{q^m-1}{h}$, we have $|H|=h$. Since $h \mid q-1$, it follows that $H \subseteq \mathbb{F}_q^\ast$.
Construct the quotient group $G = \mathbb{F}_{q^m}^\ast / H$, which is also cyclic of order $n$, generated by $\bar{\gamma} = \gamma H$.

Take $\beta = \gamma^h$. The order of $\beta$ is
$$\frac{q^m-1}{\gcd(q^m-1,\,h)} = n,$$
so $\beta^n=1$ and $\bar{\beta} = \beta H = \bar{\gamma}^h$.

Let
$$\Phi: \mathbb{F}_q^n \to \mathbb{F}_{q^m},\quad \Phi(\boldsymbol{v}) = \sum_{i=0}^{n-1} v_i\, \beta^i.$$

If $m=1$, then $\mathbb{F}_{q^m}=\mathbb{F}_q$, and $\Phi$ is clearly surjective as its image is the entire field $\mathbb{F}_q$.

If $m\ge 2$, we have
$$n = \frac{q^m-1}{h} \geq \frac{q^m-1}{q-1} \ge q^{m-1}+1.$$
This implies the order of $\beta$ is greater than $q^{m-1}$, so $\beta$ does not lie in any proper subfield of $\mathbb{F}_{q^m}$. Consequently, the minimal polynomial of $\beta$ over $\mathbb{F}_q$ has degree $m$, and $\{1,\beta,\beta^2,\dots,\beta^{m-1}\}$ forms an $\mathbb{F}_q$-basis for $\mathbb{F}_{q^m}$. The set $\{\beta^i \mid 0\le i\le n-1\}$ therefore spans the entire extension field, which means $\Phi$ is surjective.

By the dimension formula for linear maps, we get
$$\dim \ker \Phi = n - m.$$

Take an arbitrary $1$-dimensional $\mathbb{F}_q$-subspace $W$ of $\mathbb{F}_{q^m}$ (a typical choice is $W=\mathbb{F}_q$). Define
$$U = \big\{\, \boldsymbol{v}\in\mathbb{F}_q^n : \Phi(\boldsymbol{v})\in W \,\big\}.$$
By properties of linear maps, $U$ is a linear subspace. The restriction of $\Phi$ to $U$ has kernel $\ker\Phi$ and image $W$, so
$$\dim U = \dim\ker\Phi + 1 = n - m + 1.$$
The codimension of $U$ is then
$$\operatorname{codim}U = m-1.$$

For any vector $\boldsymbol{v}$, compute the image of its shift under $\Phi$:
$$\begin{aligned}
\Phi(\tau \boldsymbol{v})
&= \sum_{i=0}^{n-1} (\tau \boldsymbol{v})_i\, \beta^i= v_{n-1} + \sum_{i=1}^{n-1} v_{i-1}\, \beta^i \\
&= v_{n-1} + \beta\sum_{j=0}^{n-2} v_j\, \beta^j \\
&= \beta\sum_{j=0}^{n-1} v_j\, \beta^j + v_{n-1}\big(1-\beta^n\big).
\end{aligned}$$
Since $\beta^n=1$, the last term vanishes. Hence for all $\boldsymbol{v}\in\mathbb{F}_q^n$,
$$\Phi(\tau \boldsymbol{v}) = \beta\,\Phi(\boldsymbol{v}).$$
Define the map $T\colon \mathbb{F}_{q^m}\to\mathbb{F}_{q^m}$ by $T(x) = \beta x$. Then we have the following commutative diagram:
\[\begin{tikzcd}
\mathbb{F}_q^{n} \arrow[r, "\Phi"] \arrow[d, "\tau"']
& \mathbb{F}_{q^m}  \arrow[d, "T"'] \\
\mathbb{F}_q^{n} \arrow[r, "\Phi"]
& \mathbb{F}_{q^m} 
\end{tikzcd}\]

We now verify that for any nonzero vector $\boldsymbol{v}\in \mathbb{F}_q^n$, there exists an integer $i\in[n]$ such that $\tau^i \boldsymbol{v}\in U$. By definition,
\[\tau^i \boldsymbol{v}\in U \iff \Phi(\tau^i \boldsymbol{v})\in W.\]
Recall that $\Phi(\tau^i \boldsymbol{v})=\beta^i\Phi(\boldsymbol{v})$, which further implies
\[\Phi(\tau^i \boldsymbol{v})\in W \iff \beta^i \Phi(\boldsymbol{v})\in W.\]
Let $\boldsymbol{v}\in \mathbb{F}_q^n$ be a nonzero vector. If $\Phi(\boldsymbol{v})=0$, then $\boldsymbol{v}\in\ker\Phi\subseteq U$, and we may take $i=0$.
If $\Phi(\boldsymbol{v})\neq 0$, it suffices to find an integer $i$ such that $\tau^i \boldsymbol{v}\in U$, which is equivalent to
$$\beta^i \Phi(\boldsymbol{v}) \in W = \mathbb{F}_q.$$

Introduce the projective space
$$\mathbb{P} = \big\{\, [y] = \mathbb{F}_q^\ast\cdot y : y\in\mathbb{F}_{q^m}^\ast \big\},$$
with cardinality $|\mathbb{P}| = \dfrac{q^m-1}{q-1}$.
The group $G=\mathbb{F}_{q^m}^\ast/H$ acts on $\mathbb{P}$ by $(gH)\cdot [y] = [gy]$. The stabilizer of $[1]$ is
$$S = \operatorname{Stab}_G([1]) = \mathbb{F}_q^\ast / H,\quad |S| = \frac{q-1}{h}.$$
We have
$$\frac{|G|}{|S|} = \dfrac{n}{\frac{q-1}{h}} = \frac{q^m-1}{q-1} = |\mathbb{P}|,$$
so $G$ acts transitively on $\mathbb{P}$.

Let
$$A = \frac{q^m-1}{q-1},\qquad B = \frac{q-1}{h},$$
so $n=AB$, and the given condition is $\gcd(A,h)=1$.

Let $K=\langle \bar{\beta} \rangle$ be the subgroup generated by $\bar{\beta}=\bar{\gamma}^h$. The order of $K$ is $\dfrac{n}{\gcd(n,h)}$.
Using $\gcd(A,h)=1$, we obtain
$$\gcd(n,h) = \gcd(AB,h) = \gcd(B,h).$$
Set $d=\gcd(B,h)$, then $|K|=\dfrac{n}{d}$.

Direct computation in the cyclic group $G$ gives $|K\cap S|=\dfrac{B}{d}$.
By the formula for the order of a product of groups:
$$
|KS| = \frac{|K|\cdot |S|}{|K\cap S|}
= \frac{\frac{n}{d}\cdot B}{\frac{B}{d}}
= n,
$$
which implies $KS=G$. Thus $K$ acts transitively on the coset space $G/S \cong \mathbb{P}$.

By transitivity, there exists an integer $i$ such that $\bar{\beta}^i\cdot [1] = [\Phi(\boldsymbol{v})]$, i.e.
$$\beta^{-i} \Phi(\boldsymbol{v}) \in \mathbb{F}_q^\ast.$$
This yields $\tau^{-i} \boldsymbol{v}\in U$. Therefore $U$ is a cyclically covering subspace of $\mathbb{F}_q^n$.

In summary, $U$ is a cyclically covering subspace, which yields the lower bound $h_q(n)\ge m-1$. Combining this with the upper bound
\[h_q(n)\le\left\lfloor \log_q\left( \frac{q^m - 1}{h} \right) \right\rfloor = m-1,\]
we eventually conclude that $h_q(n)=m-1$.
\end{proof}

\begin{coro}\label{coro7-1}
(1) When \(H=\{1\}\), we have \(h=1\). The condition
\(\gcd\left(m, h\right) = 1\)
always holds. Consequently, we obtain
\[h_q(q^m - 1) = m - 1 = \lfloor\log_q(q^m - 1)\rfloor.\]
This is precisely Theorem 5 established by Cameron, Ellis and Raynaud \cite{Cameron-Ellis-Raynaud} (see also part (iv) of Lemma \ref{lem1.1}).

(2) When \(H=\mathbb{F}_q^*\), we have \(h=q-1\). Accordingly, we get
\[h_q\left(\frac{q^m - 1}{q-1}\right) = h_q\left(\sum_{r=0}^{m-1} q^r\right) = m - 1,~~\text{where}~\gcd(m,q-1) = 1.\]
This is exactly the case when $k=1$ in Theorem 8 proved by Cameron, Ellis and Raynaud \cite{Cameron-Ellis-Raynaud} (see also part (vi) of Lemma \ref{lem1.1}).

The following consequences include new families beyond the two previously known cases.

(3) Let $q$ be an odd prime power. Let $H$ be the subgroup of the multiplicative group \(\mathbb{F}_q^*\) of order \(\dfrac{q-1}{2}\). Then \(h=\dfrac{q-1}{2}\). If $\gcd\left(m,\frac{q-1}{2}\right)=1,$ then
\[h_q\left(\frac{q^m-1}{(q-1)/2}\right)=h_q\left(2\,\frac{q^m-1}{q-1}\right)=m-1.\]
When \(m=2\), the condition simplifies to \(q\equiv 3\pmod{4}\), and we have
\[h_q\big(2(q+1)\big)=1.\]

(4) Let $m$ be odd. Let $q$ be an odd prime power such that $q-1=2^k\ (k\ge 1)$. Then for any $h\mid q-1$, we have
\[\gcd\left(\frac{q^m-1}{q-1},\,h\right)=1.\]
Consequently, for all $h\mid q-1$, we have
\[h_q\left(\frac{q^m-1}{h}\right)=m-1.\]

(5) Let $q=2^k$ with $k\ge 1$. Then $q-1=2^k-1$ is odd. For any divisor $h$ of $2^k-1$, we have
\[\gcd\left(\frac{q^m-1}{q-1},\,h\right)=\gcd(m,h).\]
If $\gcd(m,h)=1$, then
\[h_{2^k}\left(\frac{2^{km}-1}{h}\right)=m-1.\]
\end{coro}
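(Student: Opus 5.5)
The statement is Corollary \ref{coro7-1}. Each of its five parts is a specialization of Theorem \ref{Thm7-1}. So in every case I will exhibit a divisor $h$ of $q-1$ and check the single hypothesis $\gcd(m,h)=1$. In parts (4) and (5) the hypothesis is phrased as $\gcd\bigl(\frac{q^m-1}{q-1},h\bigr)=1$. To pass between the two forms I will use
\[
\frac{q^m-1}{q-1}=\sum_{r=0}^{m-1}q^r\equiv m \pmod{q-1},
\]
which follows from $q\equiv1\pmod{q-1}$. Since $h\mid q-1$, this gives $\gcd\bigl(\frac{q^m-1}{q-1},h\bigr)=\gcd(m,h)$. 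This is the one common computation, and after it each part is bookkeeping.

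For (1), I take $h=1$; then $\gcd(m,1)=1$ and Theorem \ref{Thm7-1} gives $h_q(q^m-1)=m-1$. For (2), I take $h=q-1$; the hypothesis is exactly $\gcd(m,q-1)=1$, and $\frac{q^m-1}{q-1}=\sum_{r=0}^{m-1}q^r$.

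For (3), $q$ is odd, so $h=\frac{q-1}{2}$ divides $q-1$, and
\[
\frac{q^m-1}{h}=2\,\frac{q^m-1}{q-1}.
\]
When $m=2$ this number is $2(q+1)$. The condition $\gcd\bigl(2,\frac{q-1}{2}\bigr)=1$ means $\frac{q-1}{2}$ is odd, i.e.\ $q\equiv3\pmod 4$. The theorem then gives $h_q(2(q+1))=1$.

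For (4), $q-1=2^k$, so every divisor $h$ is a power of $2$. Since $m$ is odd, $\frac{q^m-1}{q-1}=\sum_{r=0}^{m-1}q^r$ is a sum of $m$ odd terms and hence odd. So its gcd with any power of $2$ is $1$, and therefore $\gcd(m,h)=1$ as the theorem requires. For (5), $q=2^k$, and the displayed congruence gives $\gcd\bigl(\frac{q^m-1}{q-1},h\bigr)=\gcd(m,h)$. Under $\gcd(m,h)=1$ the theorem yields $h_{2^k}\bigl(\frac{2^{km}-1}{h}\bigr)=m-1$.

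The only step needing care is the congruence $\frac{q^m-1}{q-1}\equiv m\pmod{q-1}$, which is immediate. I expect no real obstacle; one must only ensure each chosen $h$ actually divides $q-1$ (e.g.\ $q$ odd in (3)).
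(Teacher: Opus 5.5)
Your proposal is correct and takes the same approach as the paper: each part is a direct specialization of Theorem~\ref{Thm7-1} to a particular divisor $h$ of $q-1$. The congruence $\frac{q^m-1}{q-1}\equiv m \pmod{q-1}$, which you use to pass between $\gcd\bigl(\frac{q^m-1}{q-1},h\bigr)=1$ and $\gcd(m,h)=1$, is exactly the identification the paper makes implicitly in the first line of that theorem's proof (and in part (4) you could skip it, since $m$ odd and $h$ a power of $2$ already give $\gcd(m,h)=1$).
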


\section{Average Lower Bounds for \(h_q(n)\)}
Let $n$ be a prime with $2$ as a primitive root. By the results in \cite{Aaronson-Groenland-Johnston} (see Lemma \ref{lem1.2}), we have \(h_2(n)=2\). Since \(h_2(mn)\ge h_2(m)+h_2(n)\), there exists a sequence \(\{n_i\}\) such that \(h_2(n_i)\to\infty\) as \(n_i\to\infty\). Furthermore, in Theorem \ref{Thm3.2} and Lemma \ref{Lem3-1}, we show that for any prime \(n>3\) with \(\operatorname{ord}_n(3)\) odd, the inequality \(h_3(n)\ge 1\) holds. Such primes make up a proportion of \(\frac13\) among all primes. Combining this with the inequality \(h_3(mn)\ge h_3(m)+h_3(n)\), we similarly obtain a sequence \(\{n_i\}\) satisfying \(h_3(n_i)\to\infty\) as \(n_i\to\infty\). Accordingly, we investigate average lower bounds for \(h_q(n)\), and divide our discussion into two cases: \(q=2\) and \(q=3\).

We need two classical results concerning prime numbers: the Prime Number Theorem and Mertens' second theorem~(see \cite{Apostol}). Let \(\pi(x)\) denote the number of primes not exceeding $x$. The Prime Number Theorem asserts that
\[\pi(x)\sim \frac{x}{\log x}\quad (x\to\infty).\]
On the other hand, Mertens' second theorem gives the asymptotic formula for the sum of reciprocals of all primes not exceeding $x$:
\[\sum_{p \le X} \frac{1}{p} = \log\log X + M + o(1),\]
where $M$ is the Meissel-Mertens constant.

We also require Abel's summation formula.
\begin{lem}\cite{Apostol}\label{Lem4.1}
Let \(\{a_n\}_{n=1}^{\infty}\) be a sequence and let \(f(t)\) be a continuously differentiable function. Define the partial sum
\(A(t) = \sum_{1 \le n \le t} a_n.\)
Then for any real number \(x \ge 1\), the identity
\[\sum_{1 \le n \le x} a_n f(n) = A(x) f(x) - \int_1^x A(t) f'(t) \, dt\]
holds.
\end{lem}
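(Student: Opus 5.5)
The statement is Abel's summation formula (Lemma \ref{Lem4.1}), a classical identity, so I would prove it directly. My plan is to express the sum as a Riemann--Stieltjes integral against the step function $A(t)$ and integrate by parts. To avoid Stieltjes machinery, I would instead run an elementary argument based on the fundamental theorem of calculus. Set $N=\lfloor x\rfloor$ and note that $A(t)=A(k)$ is constant on each interval $[k,k+1)$, with $A(t)=0$ for $t<1$.

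The first step is to write $a_n = A(n)-A(n-1)$ with $A(0)=0$. Then perform summation by parts:
\[
\sum_{n=1}^{N} a_n f(n) = \sum_{n=1}^{N} \bigl(A(n)-A(n-1)\bigr) f(n) = A(N)f(N) - \sum_{n=1}^{N-1} A(n)\bigl(f(n+1)-f(n)\bigr).
\]

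The second step uses that $f$ is continuously differentiable, so $f(n+1)-f(n)=\int_n^{n+1} f'(t)\,dt$. Because $A(t)=A(n)$ on $[n,n+1)$, each term becomes $A(n)\bigl(f(n+1)-f(n)\bigr)=\int_n^{n+1} A(t)f'(t)\,dt$. Summing over $n$ gives $\int_1^N A(t)f'(t)\,dt$.

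The third step handles the fractional tail from $N$ to $x$. On $[N,x]$ we have $A(t)=A(N)=A(x)$, so
\[
A(x)f(x) - A(N)f(N) = \int_N^x A(t) f'(t)\,dt.
\]
Substituting $A(N)f(N)=A(x)f(x)-\int_N^x A(t)f'(t)\,dt$ into the identity from the first step and combining the two integrals yields $\sum_{n\le x} a_n f(n) = A(x)f(x) - \int_1^x A(t)f'(t)\,dt$.

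No step is a real obstacle. The only care needed is the boundary bookkeeping: the index shift in the summation by parts, the convention $A(0)=0$, the case $x<2$ where the middle sum is empty, and matching the step function $A(t)$ correctly at integer endpoints, which has measure zero and does not affect the integrals. Since this is standard, one could also simply cite \cite{Apostol}.
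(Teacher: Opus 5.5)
Your proof is correct: the summation by parts with $A(0)=0$, the identity $A(n)\bigl(f(n+1)-f(n)\bigr)=\int_n^{n+1}A(t)f'(t)\,dt$ on each unit interval, and the tail identity on $[N,x]$ combine to give exactly the stated formula, including the degenerate case $1\le x<2$. The paper gives no proof of its own and simply cites Apostol, whose proof of Abel's identity is this same elementary summation-by-parts argument (stated there with a general lower endpoint), so your route is essentially the same.
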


We now state the following theorem.
\begin{thm}\label{Thm4.1}
Let $q$ be a prime number. Assume that there exists a set of primes $\mathcal{P}$ of natural density $A_q > 0$ and a constant $c_q > 0$ such that $h_q(p) \ge c_q$ for every $p \in \mathcal{P}$.  
Then we obtain the following average lower bound
\[\frac{1}{X}\sum_{n\le X} h_q(n) \ge (c_q A_q + o(1))\log\log X \qquad (X\to\infty).\]
\end{thm}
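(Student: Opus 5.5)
The plan is to turn superadditivity into a sum over prime divisors. Iterating Lemma \ref{lem1.2}~(i), $h_q(ab)\ge h_q(a)+h_q(b)$, over the factorization of $n$ gives $h_q(n)\ge \sum_{p\in\mathcal P,\ p\mid n} h_q(p)$. Each prime power $p^e\,\|\,n$ contributes at least $h_q(p^e)\ge h_q(p)$ by Theorem \ref{thm4-1}. Hence
\[
h_q(n)\ \ge\ c_q\,\omega_{\mathcal P}(n),\qquad \omega_{\mathcal P}(n)=\#\{p\in\mathcal P: p\mid n\}.
\]
It therefore suffices to show $\frac1X\sum_{n\le X}\omega_{\mathcal P}(n)\ge (A_q+o(1))\log\log X$.

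Next I swap the order of summation:
\[
\sum_{n\le X}\omega_{\mathcal P}(n)=\sum_{p\in\mathcal P,\ p\le X}\Bigl\lfloor \tfrac Xp\Bigr\rfloor \ \ge\ X\sum_{p\in\mathcal P,\ p\le X}\frac1p-\pi(X).
\]
By the Prime Number Theorem, $\pi(X)=o(X)$, so the error term is $o(1)$ after dividing by $X$, which is negligible against $\log\log X$. The remaining task is to establish
\[
\sum_{p\in\mathcal P,\,p\le X}\frac1p=(A_q+o(1))\log\log X .
\]

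For this I apply Abel summation (Lemma \ref{Lem4.1}) with $a_n=1$ for $n\in\mathcal P$, $a_n=0$ otherwise, and $f(t)=1/t$. Writing $\pi_{\mathcal P}(t)=A(t)$, this gives
\[
\sum_{p\in\mathcal P,\,p\le X}\frac1p=\frac{\pi_{\mathcal P}(X)}{X}+\int_2^X\frac{\pi_{\mathcal P}(t)}{t^2}\,dt .
\]
Natural density relative to the primes means $\pi_{\mathcal P}(t)=(A_q+\varepsilon(t))\pi(t)$ with $\varepsilon(t)\to0$. Combined with the Prime Number Theorem this yields $\pi_{\mathcal P}(t)=(A_q+o(1))t/\log t$. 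The first term is then $o(1)$. For the integral, given $\varepsilon>0$, I choose $T$ with $|\pi_{\mathcal P}(t)-A_q t/\log t|\le \varepsilon t/\log t$ for all $t\ge T$. The range $[2,T]$ contributes $O_T(1)$, while on $[T,X]$ the integral equals $(A_q\pm\varepsilon)\bigl(\log\log X-\log\log T\bigr)$. Letting $\varepsilon\to0$ gives the asymptotic $(A_q+o(1))\log\log X$. The same computation for all primes recovers Mertens' second theorem, which serves as a consistency check.

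The main obstacle is this density-to-reciprocal-sum step. It needs care because the density hypothesis only controls $\pi_{\mathcal P}$ asymptotically, so the $\varepsilon$-splitting of the integral must be done explicitly. Everything else is routine. Combining the steps,
\[
\frac1X\sum_{n\le X}h_q(n)\ \ge\ c_q\bigl(A_q+o(1)\bigr)\log\log X - o(1),
\]
and absorbing the $o(1)$ term gives the statement.
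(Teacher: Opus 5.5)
Your proposal is correct and follows the paper's proof essentially step for step. Both arguments use superadditivity to get $h_q(n)\ge c_q\,\omega_{\mathcal P}(n)$, swap the sum to $\sum_{p\in\mathcal P,\,p\le X}\lfloor X/p\rfloor$, and apply Abel summation to convert the relative density into $\sum_{p\in\mathcal P,\,p\le X}1/p=(A_q+o(1))\log\log X$. The differences are cosmetic: you pass through prime powers and Theorem~\ref{thm4-1} rather than $\operatorname{rad}(n)$, and you carry out the $\varepsilon$-splitting of the integral explicitly, where the paper writes $\pi_{\mathcal P}=A_q\pi+R$ and invokes Mertens' theorem.
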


\begin{proof}
By the results in \cite{Aaronson-Groenland-Johnston} (see Lemma \ref{lem1.2}), we have
\[h_q(mn) \ge h_q(m) + h_q(n) \qquad\text{for all positive integers } m,n.\]
In particular, if $\operatorname{rad}(n) = \prod_{p\mid n}p$ denotes the radical of $n$, then
\[h_q(n) \ge h_q(\operatorname{rad}(n)) \ge \sum_{p\mid n} h_q(p).\]
Let $w_{\mathcal{P}}(n)$ be the number of distinct prime factors of $n$ that belong to $\mathcal{P}$, i.e., $w_{\mathcal{P}}(n) = \displaystyle\sum_{\substack{p \in \mathcal{P} \\ p\mid n}} 1$. Since $h_q(p) \ge c_q$ for all $p\in \mathcal{P}$, we immediately get
\[h_q(n) \ge c_q \, w_{\mathcal{P}}(n).\]

Summing over all integers $n\le X$ gives
\[\sum_{n\le X} h_q(n) \ge c_q \sum_{n\le X} w_{\mathcal{P}}(n).\]
Interchanging the order of summation yields
\[\sum_{n\le X} w_{\mathcal{P}}(n)= \sum_{\substack{p\in \mathcal{P}\\ p\le X}} \sum_{\substack{n\le X\\ p\mid n}} 1
=\sum_{\substack{p\in \mathcal{P}\\ p\le X}} \Bigl\lfloor \frac{X}{p} \Bigr\rfloor.\]
Writing
\[\left\lfloor \frac{X}{p} \right\rfloor = \frac{X}{p} - \left\{\frac{X}{p}\right\}\]
with $0\le \left\{\frac{X}{p}\right\}<1$, we obtain
\begin{equation}\label{eq:main-ineq}
\sum_{n\le X} h_q(n) \ge c_q \Bigl( X \sum_{\substack{p\in \mathcal{P}\\ p\le X}} \frac{1}{p}-\sum_{\substack{p\in \mathcal{P}\\ p\le X}} \Bigl\{ \frac{X}{p} \Bigr\} \Bigr).
\end{equation}	
Next, we evaluate the sum $\displaystyle\sum_{\substack{p \in \mathcal{P} \\ p \le X}} \frac{1}{p}$ by means of Abel's summation formula. Define
\[a_p =
\begin{cases}
1, & \text{if } p \in \mathcal{P} \text{ and } p \text{ is prime}, \\
0, & \text{otherwise},
\end{cases}\]
and let $A(t) = \displaystyle\sum_{p \le t} a_p = \pi_{\mathcal{P}}(t)$, which counts the number of primes in $\mathcal{P}$ up to $t$. Set $f(t) = \dfrac{1}{t}$. By Abel's summation formula,
\[\sum_{p \le X} a_p f(p) = A(X)f(X) - \int_{2}^{X} A(t) f'(t) dt.\]
Substituting the corresponding expressions, we get
\[\sum_{\substack{p\in \mathcal{P}\\ p\le X}} \frac{1}{p}= \frac{\pi_P(X)}{X} + \int_{2}^{X} \frac{\pi_P(t)}{t^2}\,dt.\]
Substituting $\pi_P(t) = A_q \pi(t) + R(t)$ with $R(t)=o\left(\frac{t}{\log t}\right)$, we obtain
\[\sum_{\substack{p\in \mathcal{P}\\ p\le X}} \frac{1}{p}= A_q\Bigl( \frac{\pi(X)}{X} + \int_{2}^{X} \frac{\pi(t)}{t^2}\,dt \Bigr)+ \frac{R(X)}{X} + \int_{2}^{X} \frac{R(t)}{t^2}\,dt.\]
Recall Mertens' second theorem:
\[\sum_{p \le X} \frac{1}{p} = \log\log X + M + o(1),\]
where $M$ is the Meissel-Mertens constant. Applying Abel's summation formula once again (with $a_n=1$ for prime $n$ and $a_n=0$ otherwise, and $f(t)=\frac{1}{t}$) verifies that
\[\frac{\pi(X)}{X} + \int_{2}^{X} \frac{\pi(t)}{t^2} dt = \sum_{p \le X} \frac{1}{p}.\]
Moreover, $\frac{R(X)}{X} = o\left(\frac{1}{\log X}\right) \to 0$ and, because $R(t)=o\left(\frac{t}{\log t}\right)$,
\[\int_{2}^{X} \frac{R(t)}{t^2}\,dt = \int_{2}^{X} o\!\left(\frac{1}{t\log t}\right)dt = o(\log\log X).\]
Consequently,
\begin{equation}\label{eq:recip-sum}
\sum_{\substack{p\in \mathcal{P}\\ p\le X}} \frac{1}{p} = A_q \log\log X + o(\log\log X).
\end{equation}

For the fractional part sum we use the trivial estimate
\begin{equation}\label{eq:frac-sum}
\sum_{\substack{p\in \mathcal{P}\\ p\le X}} \Bigl\{ \frac{X}{p} \Bigr\}\le \sum_{\substack{p\in \mathcal{P}\\ p\le X}} 1 = \pi_{\mathcal{P}}(X) \sim A_q \frac{X}{\log X}= o(X\log\log X).
\end{equation}	
Inserting \eqref{eq:recip-sum} and \eqref{eq:frac-sum} into \eqref{eq:main-ineq} gives
\[\sum_{n\le X} h_q(n)\ge c_q \Bigl( X\bigl(A_q \log\log X + o(\log\log X)\bigr) - o(X\log\log X) \Bigr)= c_q A_q X \log\log X + o(X\log\log X).\]
Dividing by $X$ we finally obtain
\[\frac{1}{X}\sum_{n\le X} h_q(n) \ge (c_q A_q + o(1))\log\log X.\]
\end{proof}

Artin conjectured that $2$ is a primitive root modulo infinitely many primes. More generally, for any positive integer $q$ that is not a perfect square, there are infinitely many primes $n$ for which $q$ is a primitive root modulo $n$. This conjecture holds under the Generalized Riemann Hypothesis (Hooley \cite{Hooley}). Moreover, these primes have a positive natural density, denoted by \(A(q)\). For example,
\[A(2) = \prod_{p \text{ prime}} \left( 1 - \frac{1}{p(p-1)} \right) \approx 0.3739558136 \dots.\]

From Artin's primitive root conjecture, it follows that
\begin{coro}\label{coro4.1}
Under the Generalized Riemann Hypothesis, we have
\[\frac{1}{X}\sum_{n\leq X}h_2(n)\geq (2A(2)+o(1))\log\log X~~~(X \to +\infty).\]
\end{coro}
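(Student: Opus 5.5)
We will apply Theorem \ref{Thm4.1} with $q=2$. The statement is exactly the instance of that theorem with $c_2=2$ and $A_2=A(2)$. Everything therefore reduces to checking its two hypotheses.

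For the set of primes, we take $\mathcal{P}$ to be the primes $p>3$ for which $2$ is a primitive root modulo $p$. By Lemma \ref{lem1.2} $\mathrm{(iv)}$, or equivalently by Case 2 of Theorem \ref{Thm3.1}, which gives $h_2(p)\ge 2$, every $p\in\mathcal{P}$ satisfies $h_2(p)= 2$. This gives the constant $c_2=2$.

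Next we need the natural density of $\mathcal{P}$. Under the Generalized Riemann Hypothesis, Hooley's theorem \cite{Hooley} shows that the primes for which $2$ is a primitive root have natural density $A(2)=\prod_p\bigl(1-\frac{1}{p(p-1)}\bigr)>0$. Removing the finitely many primes $p\le 3$ (only $p=3$ qualifies) does not change the density. Hence $\mathcal{P}$ has natural density $A_2=A(2)>0$. This is precisely the form needed in the proof of Theorem \ref{Thm4.1}, namely $\pi_{\mathcal{P}}(t)=A(2)\pi(t)+o(t/\log t)$.

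Substituting these into Theorem \ref{Thm4.1} gives
\[\frac{1}{X}\sum_{n\le X}h_2(n)\ge (2A(2)+o(1))\log\log X\qquad(X\to+\infty).\]
There is no real obstacle in this argument. The only point to watch is that Hooley's conditional result gives a genuine natural density (an asymptotic for $\pi_{\mathcal{P}}$), not just infinitude, because Theorem \ref{Thm4.1} uses this asymptotic in its Abel summation step.
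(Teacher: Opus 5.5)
Your proposal is correct and follows the paper's proof: both take $\mathcal{P}$ to be the primes $p>3$ for which $2$ is a primitive root. Both use $c_2=2$ from Lemma~\ref{lem1.2}~$\mathrm{(iv)}$ and Hooley's GRH-conditional density $A(2)$, and then apply Theorem~\ref{Thm4.1}. Your remarks that removing $p=3$ does not change the density, and that Hooley gives a genuine asymptotic for $\pi_{\mathcal{P}}$ (which the Abel summation step needs), are correct and slightly more careful than the paper's one-line argument.
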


\begin{proof}
We take the set of primes $\mathcal{P}$ in Theorem \ref{Thm4.1} as
\[\mathcal{P}=\{p \text{ prime} : p>3 ~\text{ and }~ 2~ \text{ is a primitive root modulo } p\}.\]
In this case, \(c_2=2\). Consequently, Corollary \ref{coro4.1} follows from Theorem \ref{Thm4.1}.
\end{proof}

Unlike the case of $q=2$, we do not need to assume the Generalized Riemann Hypothesis for $q=3$. We now state the following corollary.
\begin{coro}\label{coro4.2}
Let $q=3$. We have
$$\frac{1}{X}\sum_{n\le X}h_3(n) \ge \left(\frac13+ o(1)\right)\log\log X \quad (X\to +\infty).$$
\end{coro}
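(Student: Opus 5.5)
The plan is to apply Theorem \ref{Thm4.1} with $q=3$. We take
\[
\mathcal{P}=\{p \text{ prime}: p>3,\ \operatorname{ord}_p(3)\ \text{odd}\},
\]
and set $c_3=1$ and $A_3=\tfrac13$. Theorem \ref{Thm4.1} needs two inputs: a pointwise lower bound on $h_3(p)$ for $p\in\mathcal{P}$, and the natural density of $\mathcal{P}$.

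\emph{Pointwise bound.} For every $p\in\mathcal{P}$ we have $\gcd(3,p)=1$ and $\operatorname{ord}_p(3)$ odd. Theorem \ref{Thm3.2} then gives $h_3(p)\ge 1$ directly, so $c_3=1$ works.

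\emph{Density.} Lemma \ref{Lem3-1}, which rests on the unconditional results of Ballot cited there, says the primes with $\operatorname{ord}_p(3)$ odd have natural density $\tfrac13$. Removing the finitely many primes $p\le 3$ does not change the density, so $A_3=\tfrac13$. In the notation of the proof of Theorem \ref{Thm4.1}, this means
\[
\pi_{\mathcal{P}}(t)=\tfrac13\pi(t)+R(t), \qquad R(t)=o\!\left(\frac{t}{\log t}\right).
\]
This is exactly the form of density the Abel summation step there requires.

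\emph{Conclusion.} Substituting $c_3A_3=\tfrac13$ into Theorem \ref{Thm4.1} gives
\[
\frac1X\sum_{n\le X}h_3(n)\ge\left(\tfrac13+o(1)\right)\log\log X .
\]
No Generalized Riemann Hypothesis is needed: unlike the primitive-root density used in Corollary \ref{coro4.1}, the odd-order density for base $3$ is known unconditionally.

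The only delicate point is the density input. The elementary argument in Lemma \ref{Lem3-1} shows only that $\mathcal{P}$ is infinite, not that it has a density. We therefore rely on the cited theorem (Ballot, Theorem 3.1.3) for the statement that the density is $\tfrac13$ in the natural-density sense, which is the form Theorem \ref{Thm4.1} uses.
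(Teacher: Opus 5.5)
Your proposal is correct and matches the paper's proof: both take $\mathcal{P}$ to be the primes $p>3$ with $\operatorname{ord}_p(3)$ odd, use Theorem \ref{Thm3.2} to get $c_3=1$, take the density $A_3=\tfrac13$ from Lemma \ref{Lem3-1} (via Ballot), and then apply Theorem \ref{Thm4.1}. You also note correctly that the elementary part of Lemma \ref{Lem3-1} only shows $\mathcal{P}$ is infinite, so the natural-density input has to come from the cited unconditional result.
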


\begin{proof}
In Theorem \ref{Thm3.2} and Lemma \ref{Lem3-1}, we show that for any prime \(n>3\) with \(\operatorname{ord}_n(3)\) odd, the inequality \(h_3(n)\ge 1\) holds. Such primes make up a proportion of \(\frac13\) among all primes. Define the set of primes
$$\mathcal{P}=\{p \text{ prime} : p>3~ \text{ and } ~\operatorname{ord}_p(3)~\text{is odd}\}.$$
Consequently, Corollary \ref{coro4.2} follows from Theorem \ref{Thm4.1}.
\end{proof}

\end{document}